\documentclass[10pt]{article}

\usepackage[utf8]{inputenc}
\usepackage[T1]{fontenc}

\usepackage{epsf}
\usepackage{amsmath}

\allowdisplaybreaks

\usepackage[showframe=false]{geometry}
\usepackage{changepage}

\usepackage{epsfig}
\usepackage{amssymb}

\usepackage{amsthm}
\usepackage{setspace}
\usepackage{cite}
\usepackage{mcite}

\usepackage{algorithmic}  
\usepackage{algorithm}

\usepackage{shadow}
\usepackage{fancybox}
\usepackage{fancyhdr}

\usepackage{color}
\usepackage[usenames,dvipsnames,svgnames,table]{xcolor}

\definecolor{mypurple}{rgb}{.4,.0,.5}

\usepackage[hyphens]{url}

\usepackage[colorlinks=true,
            linkcolor=black,
            urlcolor=blue,
            citecolor=purple]{hyperref}

\usepackage{breakurl}

\def\s{{\bf s}}

\def\y{{\bf y}}
\def\v{{\bf v}}
\def\x{{\bf x}}

\def\x{{\mathbf x}}

\def\s{{\bf s}}

\def\v{{\bf v}}
\def\x{{\bf x}}
\def\y{{\bf y}}
\def\z{{\bf z}}

\def\be{\begin{equation}}
\def\ee{\end{equation}}
\def\ba{\left[\begin{array}}
\def\ea{\end{array}\right]}

\def\s{{\bf s}}

\def\v{{\bf v}}
\def\x{{\bf x}}
\def\y{{\bf y}}
\def\z{{\bf z}}

\def\1{{\bf 1}}

\def\g{{\bf g}}
\def\0{{\bf 0}}

\def\cG{{\mathcal G}}

\def\mR{{\mathbb R}}

\def\mN{{\mathbb N}}
\def\mE{{\mathbb E}}
\def\mS{{\mathbb S}}

\def\lp{\left (}
\def\rp{\right )}

\def\s{{\bf s}}

\def\y{{\bf y}}
\def\v{{\bf v}}
\def\x{{\bf x}}

\def\x{{\mathbf x}}

\def\s{{\bf s}}

\def\v{{\bf v}}
\def\x{{\bf x}}
\def\y{{\bf y}}
\def\z{{\bf z}}

\def\be{\begin{equation}}
\def\ee{\end{equation}}
\def\ba{\left[\begin{array}}
\def\ea{\end{array}\right]}

\def\s{{\bf s}}

\def\v{{\bf v}}
\def\x{{\bf x}}
\def\y{{\bf y}}
\def\z{{\bf z}}

\def\({\left (}
\def\){\right )}

\def\1{{\bf 1}}

\def\g{{\bf g}}
\def\0{{\bf 0}}

\def\cX{{\mathcal X}}

\def\cL{{\mathcal L}}

\def\cN{{\mathcal N}}
\def\cX{{\mathcal X}}

\usepackage{xcolor}
\usepackage{color}

\definecolor{darkgreen}{rgb}{0, 0.4,0}

\definecolor{purplebrown}{rgb}{0.5,0.1,0.6}

\definecolor{ultclupcol}{rgb}{0.1,0.5,0.5}

\definecolor{mytrycolor}{rgb}{0.5,0.7,0.2}

\definecolor{ultclupcola}{rgb}{.5,0,.5}

\definecolor{shadebrown}{rgb}{0.1,0.1,0.9}
\definecolor{lightblue}{rgb}{0.2,0,1}

\usepackage{fancybox}
\usepackage{graphicx}
\usepackage{epstopdf}
\usepackage{epsfig}
\usepackage{wrapfig}
\usepackage{subfigure}

\usepackage{xcolor}
\usepackage{tcolorbox}

\newtcbox{\xmybox}{on line,
arc=7pt,
before upper={\rule[-3pt]{0pt}{10pt}},boxrule=0pt,
boxsep=0pt,left=6pt,right=6pt,top=0pt,bottom=0pt,enhanced, coltext=blue, colback=white!10!yellow}

\newtcbox{\xmyboxa}{on line,
arc=7pt,
before upper={\rule[-3pt]{0pt}{10pt}},boxrule=0pt,
boxsep=0pt,left=6pt,right=6pt,top=0pt,bottom=0pt,enhanced, colback=white!10!yellow}

\newtcbox{\xmyboxb}{on line,
arc=7pt,
before upper={\rule[-3pt]{0pt}{10pt}},boxrule=1pt,colframe=darkgreen!100!blue,
boxsep=0pt,left=6pt,right=6pt,top=0pt,bottom=0pt,enhanced, colback=white!10!yellow}

\newtcbox{\xmyboxc}{on line,
arc=7pt,
before upper={\rule[-3pt]{0pt}{10pt}},boxrule=.7pt,colframe=blue!100!blue,
boxsep=0pt,left=6pt,right=6pt,top=0pt,bottom=0pt,enhanced, coltext=blue, colback=white!10!yellow}

\newtcbox{\xmytboxa}{on line,
arc=7pt,
before upper={\rule[-3pt]{0pt}{10pt}},boxrule=.0pt,colframe=pink!50!yellow,
boxsep=0pt,left=6pt,right=6pt,top=0pt,bottom=0pt,enhanced, coltext=white, colback=blue!40!red}

\newtcbox{\xmytboxb}{on line,
arc=7pt,
before upper={\rule[-3pt]{0pt}{10pt}},boxrule=.0pt,colframe=pink!50!yellow,
boxsep=0pt,left=6pt,right=6pt,top=0pt,bottom=0pt,enhanced, coltext=white, colback=white!40!green}

\makeatletter
\newcommand\subsubsubsection{\@startsection{paragraph}{4}{\z@}{-2.5ex\@plus -1ex \@minus -.25ex}{1.25ex \@plus .25ex}{\normalfont\normalsize\bfseries}}
\newcommand\subsubsubsubsection{\@startsection{subparagraph}{5}{\z@}{-2.5ex\@plus -1ex \@minus -.25ex}{1.25ex \@plus .25ex}{\normalfont\normalsize\bfseries}}
\makeatother

\newtheorem{theorem}{Theorem}

\newtheorem{remark}{Remark}

\begin{document}

\begin{singlespace}

\title {Precise sample covariance spectral norm error -- an RDT view   
}
\author{
\textsc{Mihailo Stojnic
\footnote{e-mail: {\tt flatoyer@gmail.com}}
}}
\date{}
\maketitle

\centerline{{\bf Abstract}} \vspace*{0.1in}

We study the sample covariance error of centered Gaussians. A remarkable breakthrough \cite{KolLou17}  established the correct error scaling order and explicitly revealed the critical role of both the effective rank and the true covariance spectrum.

In this work, we move beyond scaling characterizations and determine the precise limiting value of the error's spectral norm. To do so, we develop a generic framework based on Random Duality Theory (RDT). Within this framework, we first determine closed-form, explicit RDT-based upper bounds. We then establish complementary lower bounds by introducing a novel bilinear-quadratic RDT lower-bounding mechanism. By combining this mechanism with a two-replica systems bounding strategy, we show that our lower and upper bounds match in large-dimensional contexts. Our theoretical results are supplemented with numerical evaluations and simulations, demonstrating an excellent agreement already for problem sizes on the order of thousands.

\vspace*{0.25in} \noindent {\bf Index Terms: Sample covariance error; Random duality theory (RDT)}.

\end{singlespace}

\section{Introduction}
\label{sec:back}

Covariance estimation is a classical task of critical importance across a variety of scientific and engineering fields. Applications range from machine learning, graphical models, and compressed sensing \cite{WainwrightBook19} to wireless communications, signal processing \cite{KrimVib96,HaghCaire2017,CaireCov20,Stoica99,Raskutti10}, and image analysis \cite{DahKeyPitz00,ZhangSch10,ChenRWK20}. Furthermore, it is widely utilized in finance \cite{LedoitWolf03,LedoitWolf04,Engle02,Holtz11,BaiShi11,FanRigWang15}, bioinformatics, genomics, microarrays \cite{XieBentler03,SchafStr05,HeroRaj12,Friedman08,LangHor08,LedoitWolf04,Witten09}, and generic random matrix theory \cite{BaiSilver10}.

The most well-known estimator is the classical \emph{sample covariance}, $\hat{\Sigma}$, which has been extensively studied over the last several decades. Typically, the primary interest lies in its deviation from the true covariance, $\Sigma$, and how this deviation is impacted by sample complexity \cite{KanLovSim97,Bourgain99,Rudelson99,GianHarTso05,Paouris06,Adam10,Adamczak11,KolLou17,VershCov12}. Because these underlying problems are challenging, analytical error estimates are usually qualitative bounds focused on establishing correct scaling orders with respect to the ambient dimension, $d$, and sample size, $n$.

Here, we introduce a different methodology that allows for precise characterizations beyond scaling orders, enabling explicit descriptions of how sample complexity impacts estimation error. For example, in practically dominant large-dimensional contexts with a sample complexity ratio $\alpha=\frac{n}{d}$, our results can determine the exact value of the sample covariance error's spectral norm, $|\hat{\Sigma}-\Sigma|_2$. This allows us to answer highly practical questions—such as how much the error decreases if the sample size is doubled or tripled—which are typically difficult to resolve within traditional scaling-order frameworks.

\subsection{Relevant prior work}
\label{sec:examples}


\noindent \underline{\textbf{\emph{Classical scenarios:}}}  Various metrics of sample covariance error have been of interest. Early studies, dating back to the 1990s, focused on quantifying the error's operator norm in the context of volume estimation and convex bodies \cite{KanLovSim97,Bourgain99,Rudelson99}. The initial upper-bounding estimate $O(\frac{d}{\sqrt{n}})$ \cite{KanLovSim97} related to isotropic vectors was successively improved in \cite{KanLovSim97, Bourgain99, Rudelson99, GianHarTso05, Paouris06, Adam10} and eventually brought to the expected $O(\sqrt{\frac{d}{n}})$ in \cite{Adamczak11} for a generic class of ensembles, including log-concave ones. Similar results of $O(\sqrt{\frac{d}{n}}, \frac{d}{n})$ were obtained in \cite{VershNonAsym12} for a generic non-isotropic scenario where the true covariance matrix is different from identity (throughout this presentation, we are typically interested in sufficiently over-sampled scenarios, which are likely the most practically relevant; in such contexts, $O(\sqrt{\frac{d}{n}}, \frac{d}{n}) = O(\sqrt{\frac{d}{n}})$). This is scaling-wise optimal in typical scenarios where data are highly heterogeneous. However, when facing more homogeneous data where the covariances reside in a small dimensional subspace (i.e., they are of a fairly small rank), one would expect that the $d$-dependence might be improved.

\vspace{.08in} 

\noindent \underline{\textbf{\emph{Rank deficient $\Sigma$:}}}  A strong effort has been made to not only improve, but completely remove the $d$-dependence. Relying on the so-called \emph{effective rank} as a (dimensionless) measure of rank deficiency, \cite{Tropp11,VershNonAsym12} observed that in the above results $d$ can be decreased to the product of effective rank and $\log(d)$ if the support of the underlying distribution is limited to the Euclidean ball. Analogous results were obtained through the consideration of covariance estimation with missing observations \cite{LouniciMiss14}. Nonetheless, while weaker, the $d$-dependence still remained.

The complete, formal removal of $d$-dependence was finally achieved in a subgaussian context in the breakthrough work \cite{KolLou17}, where a generic chaining methodology \cite{MendelsonChain10,KlarMendel05,TalChainBook05} (see also, \cite{bednorz16,bednorz14,dirkchain2015,Mendeldiscchain11}) was utilized to obtain an error characterization dependent solely on the effective rank and $n$ (see also, \cite{Adamczak15} for a reproof). Subsequently, the results obtained in \cite{KolLou17} have been significantly extended so that their correspondingly modified forms cover smooth functions of the error's operator norm, spectral covariance projectors, and shift models \cite{koltch18,Koltch21,Koltch22,KoltchZhil21,KoltchLouProj17} (for extensions in different technical directions, see \cite{Zhiv24,qihan24}).

The results of \cite{PuchNosSpo25,BuneaXiao15} consider the so-called Frobenius norm as another type of error metric and prove effective rank bounding estimates analogous to the operator norm ones from \cite{KolLou17}. For the importance and relevance of the Frobenius norm error metric in hypothesis testing, see, e.g., \cite{LedoitWolf02,HanWu20}.

\vspace{.07in}

\noindent \underline{\textbf{\emph{Various (distributional, parametric, structural, contaminated, etc.) extensions:}}}  Many of the results discussed can be adapted to hold in similar or even improved forms when $\Sigma$ is a priori known to possess a particular structure. These adaptations include both potential estimator restructuring and refinements in theoretical analyses. A variety of structures have been considered throughout recent literature, including: sparse \cite{BickelLev08, CaiZhou12, FanRigWang15, Karoui08}, block \cite{Perrot22, MarkBlock24}, Toeplitz \cite{XiaoWu12, Caitoeplitz13}, and Kronecker \cite{TsilHero13, LengPan18}. For more on general covariance structuring and its relevance, please see \cite{CaiSurvey16}.

Different distributional aspects, including anisotropic and heavy-tails, have also been of interest \cite{MinasyanZhiv23, OliRico24, Tikhom18, abdzhiv26, SriVersh13, Youssef13, MinskerWei20, MendZhiv20}. Additionally, deviations from distributional identicalness—with a particular emphasis on independent scenarios with different variance profiles—have been addressed in \cite{BandHand16, VanHandelSpec17, LatHandYouss18, CaiHanZhang22}. In this regard, particularly valuable progress has been achieved in studying matrix concentrations via free probability \cite{bandfree23, bandfree24} (for universality aspects, please refer to \cite{BrailUniv24}).

One often encounters scenarios where the available data is gathered in a non-ideal fashion. In these situations, the robustness properties of estimators become of prevalent interest. Recent literature has addressed robust covariance estimation in many non-ideal scenarios. This includes typical cases where the data are incomplete or missing \cite{CaiZhangMinmmax16,CaiZhang24,LouniciMiss14,Agost22,LouniciPacMiss23,AbdallaMiss24}, or corrupted and contaminated in various ways \cite{KeMRSZ19,ChenGaoRen18,MinskerWang24,Giulini18,abdzhiv26,SriVersh13,Youssef13}. Corresponding robust mean estimations are equally relevant. Many of the techniques developed for handling either covariance or mean estimation can be reutilized to address the other. For recent progress, please see, for example, \cite{DiakKanePen20,MinasyanZhiv23,Huber64,LugMend21,LaiRaoVempala16,DalMin2022,LugMendSurvey19}. More information on classical corruptive and contaminated models can be found in standard robust estimation references, such as \cite{Huber81,Maronna19,Hampel86,Huber64}.

There is a host of problems where data is structured differently and similar, but often more complicated, estimation tasks are faced. These include tensor or cross-covariance considerations \cite{alghattas25,hanwillzhang22,ZhouZhu21,DiakKane25,ChenSanz26}, multi-inference alignment \cite{DouFanZhou24,PerryNBRS19}, and (sparse) principal components/SVD analysis \cite{KoltchLofNic20,KoltchLouPca17,ZhangHan19}. In each of these scenarios, one can establish analogous types of estimators and characterize the residual estimation error in a fashion similar to traditional covariance estimation.

The problems we are studying here are to some degree related to topics in statistical inference and spectral analysis as well. Characterizing the properties of spiked models, recovering low-rank matrices/tensors from noisy observations, and analyzing the spectra of deformed random matrices are prominent examples where excellent progress has been made over the last decade. Determining their key features, residual estimation errors, BBP phase transitions \cite{BBP05,Peche06,BirGuioSpike20,MontRich14,DonGavJohn18,Lesetal17,PerryWB20,BarbMM17,PourBM24,GuiKKZ25,BehneReeves22,PakKK23,bandfree24}, and the locations of spectral edges—along with associated large deviations principles (LDP) (\cite{Guionnet2012large,Maida07,MckennaDeformed21,HussMcKenna24,MaillardLDP21,MergnyPott22})—typically requires a highly nontrivial transition from scaling to precise analyses.

\subsection{Our contributions}
\label{sec:cont}

 As stated earlier, we pursue a different direction here, move beyond scaling estimates, and focus on the precise characterization of the error's spectral norm. In particular, we consider a proportional large-dimensional setup with $\alpha= \frac{n}{d}$ remaining fixed as $d$ and $n$ grow. For centered Gaussian vectors with true covariance $\Sigma$, we consider the sample covariance $\hat{\Sigma}$ and determine the limiting average spectral norm of the error $\delta(\alpha) =\lim_{d\rightarrow\infty} \|\hat{\Sigma} - \Sigma\|_2$.

We achieve this by introducing a generic framework based on Random Duality Theory (RDT). First, a closed-form, explicit RDT-based upper bound is established (Sections \ref{sec:randpr}--\ref{sec:handlerd}). We then create a novel \emph{bilinear-quadratic} RDT lower-bounding mechanism (Section \ref{sec:strrd}). By combining this mechanism with a 2-replica systems bounding strategy, we ultimately show that it matches the upper bounds (Section \ref{sec:strhandbc}). Numerical evaluations and simulations are also conducted, showing excellent agreement with the theoretical predictions even for relatively small problem sizes on the order of thousands (Figures \ref{fig:fig1}--\ref{fig:fig3} and Section \ref{sec:numeval}).

Almost all of the results discussed in the previous section are of the scaling-order type. To move beyond these scaling barriers, a fundamentally different analytical mechanism was needed. A key benefit of this newly developed machinery is its versatility; it can be used to handle many, if not all, of the scenarios studied in prior literature and discussed in Section \ref{sec:examples}. Since this is an introductory paper, we chose the most classical variant of the problem. Extensions to encompass more advanced scenarios rely on the same concepts presented here, but are technically problem-specific and will be discussed elsewhere.

\section{Sample covariance error -- mathematical preliminaries}
\label{sec:sampcov}

We start by introducing precise definitions of mathematical objects used throughout the presentation. Let $n\in\mN$ and  $d\in\mN$ be two positive integers such that
\begin{equation}\label{eq:amat1a0}
\alpha \triangleq \lim_{d\rightarrow\infty} \frac{n}{d}= const.
\end{equation}
We follow the traditional literature convention and refer to the high-dimensional regime characterized by (\ref{eq:amat1a0}) as linear or proportional. Let $\bar{\x}\in\mR^{d\times 1}$ be a $d$-dimensional vector comprised of centered Gaussian elements. Moreover, let the associated covariance be $\Sigma$, i.e., let \begin{equation}\label{eq:amat1a0a0}
\mE \bar{\x}\bar{\x}^T =\Sigma.
\end{equation}
We then also write $\bar{\x}\sim \cN(0,\Sigma)$. A classical way to estimate $\Sigma$ is to collect a sample $\{ \bar{\x}^{(1)},\bar{\x}^{(2)},\dots,\bar{\x}^{(n)}\}$ of $n$ independent draws of $\bar{\x}$ and to average them out. Formally, for $\bar{\x}^{(i)}\sim \cN(0,\Sigma),i\in\{1,2,\dots,n\}$, one sets \begin{equation}\label{eq:amat1a0a1}
X \triangleq \begin{bmatrix}
               \lp \bar{\x}^{(1)} \rp^T  \\ \lp \bar{\x}^{(2)} \rp^T \\ \vdots \\ \lp \bar{\x}^{(n)} \rp^T 
             \end{bmatrix},
\end{equation}
and considers the sample covariance matrix $\hat{\Sigma} \triangleq \frac{1}{n}X^TX$ as an approximation to $\Sigma$. In other words, one hopes that
\begin{equation}\label{eq:amat1a0a2}
\hat{\Sigma} \triangleq  \frac{1}{n} X^TX \approx \Sigma  \quad \mbox{ or } \quad   \|\hat{\Sigma} - \Sigma\| = \left \| \frac{1}{n}X^TX - \Sigma \right \| \mbox{ is small }, 
\end{equation}
where $\|\cdot\|$ is the norm of choice. Naturally, as the sample size, $n$, grows (which also means that $\alpha$ increases), the quality of the approximation is expected to improve. Precisely characterizing this improvement remains one of the fundamental challenges in statistical estimation and probability theory.

A major breakthrough in this area was achieved in \cite{KolLou17}, where the error's operator norm was shown to behave in the following way
\begin{equation}\label{eq:amat1a0a3}
 \delta_n \triangleq \mE \|\hat{\Sigma} - \Sigma\| \sim \|\Sigma\| \max \lp \sqrt{\frac{r(\Sigma)}{n}} , \frac{r(\Sigma)}{n}  \rp ,
\end{equation}
where
\begin{equation}\label{eq:amat1a0a4}
r(\Sigma) \triangleq \frac{\lp\mE \|\bar{\x}\|\rp^2}{\|\Sigma\|}.
\end{equation}
Accompanying concentration inequalities are established in \cite{KolLou17} as well. The estimation error introduced in (\ref{eq:amat1a0a3}) is a function of $n$, and larger sample sizes expectedly produce smaller errors (i.e., as $n$ increases, $\delta$ decreases). The characterizations in (\ref{eq:amat1a0a3}) and (\ref{eq:amat1a0a4}) provide the correct error scaling, highlighting the roles of the sample size, $n$, and the norm of the true covariance, $\|\Sigma\|$. While this provides foundational information regarding the quality of $\hat{\Sigma}$, practical scenarios often demand more. Specifically, when gathering abundant data is costly or difficult, precisely understanding the tradeoff between sample size and accuracy becomes a necessity.

In the remainder of the paper, we address this demand and determine the underlying tradeoff. In particular, specializing for concreteness to the typical spectral norm choice, we precisely characterize $\delta(\alpha)$, defined as 
\begin{equation}\label{eq:amat1a0a5}
\delta(\alpha)\triangleq  \lim_{n\rightarrow \infty }\delta_n =  \lim_{n\rightarrow \infty }  \mE \|\hat{\Sigma} - \Sigma\|_2 =  \lim_{d\rightarrow \infty }  \mE \|\hat{\Sigma} - \Sigma\|_2, 
\end{equation}
where the last equality holds based on (\ref{eq:amat1a0}).

Let the output of function $\lambda_i(\cdot)$ be the $i$-th smallest eigenvalue of its argument. Then one has
\begin{equation}\label{eq:amat1a0a6}
 \|\hat{\Sigma} - \Sigma\|_2 = \max(\lambda_n(\hat{\Sigma} - \Sigma),|\lambda_1(\hat{\Sigma} - \Sigma)|). 
\end{equation}
For the time being, we focus on the largest eigenvalue of $\lp \hat{\Sigma} -\Sigma   \rp$, i.e., on $\lambda_n\lp \hat{\Sigma} -\Sigma   \rp$ (later on, in Section \ref{sec:matchdellam}, we will see that such an approach suffices to determine $\delta(\alpha)$). To that end, we denote the unit sphere in $\mR^d$ by $\mS^d=\{\x|\|\x\|_2=1\}$ and write
\begin{eqnarray}\label{eq:inteq1ad0}
\lambda_n\lp \hat{\Sigma} -\Sigma   \rp 
= \max_{\x\in\mS^d} \hspace{.04in} \x^T \lp \hat{\Sigma} -\Sigma   \rp \x
= \max_{\x\in\mS^d}  \hspace{.04in}  \x^T \lp \frac{1}{n}X^TX -\Sigma   \rp \x
= \max_{\x\in\mS^d} \lp  \frac{1}{n}\x^T X^TX \x - \x^T\Sigma  \x \rp.
\end{eqnarray}
By definition, $\Sigma$ is positive semi-definite ($\Sigma\succeq 0$) and admits the following eigen-decomposition
\begin{eqnarray}\label{eq:inteq1ad1}
 \Sigma = USS^TU^T.
\end{eqnarray}
In (\ref{eq:inteq1ad1}),  $S\in\mR^{d\times d}$ is a $d\times d$ diagonal matrix (with roots of $\Sigma$'s eigenvalues on the main diagonal) and $U\in\mR^{d\times d}$ is a $d\times d$ orthogonal matrix with $UU^T=U^TU=I$ (to avoid sidetracking the flow of the presentation with constant mentions of special cases, we assume that the eigenvalues of $S$ are positive and belong to an interval independent of $d$.). Let $A\in\mR^{n\times d}$ be an $n\times d$ matrix comprised of independent standard normal elements. We can then write the following statistical equivalent to (\ref{eq:inteq1ad0})
\begin{eqnarray}\label{eq:inteq1ad2}
\lambda_n\lp \hat{\Sigma} -\Sigma   \rp 
 &  =  & \max_{\x\in\mS^d} \lp  \frac{1}{n}\x^T X^TX \x - \x^T\Sigma  \x \rp
\nonumber \\
 &  =  &
 \max_{\x\in\mS^d} \lp  \frac{1}{n}\x^T US^TU^TA^TAUSU^T \x - \x^T USS^TU^T \x \rp
\nonumber \\
 &  =  &
 \max_{\x\in\mS^d} \lp  \frac{1}{n}\x^T S^TU^TA^TAUS \x - \x^T SS^T \x \rp
\nonumber \\
 &  =  &
 \max_{\x\in\mS^d} \lp  \frac{1}{n}\x^T S^TA^TAS \x - \x^T SS^T \x \rp.
\end{eqnarray}
Since $\x\in\mS^d$, the third equality follows through the cosmetic change of variables, $U^T\x\rightarrow \x$. Similarly, the fourth equality is enabled by the rotational invariance of $A$, i.e., by the fact that $A$ and $AU$ have the same distribution. For a real scalar $c$, we find it useful to set
\begin{eqnarray}\label{eq:inteq1a}
 \xi(c) & = &   \frac{1}{\sqrt{n}} \max_{\x\in\mS^d,\|S\x\|_2=c }  \sqrt{\x^T S^TA^TAS \x}.
\end{eqnarray}
This allows to rewrite (\ref{eq:inteq1ad2}) as
\begin{eqnarray}\label{eq:inteq1ab00}
\lambda_n\lp \hat{\Sigma} -\Sigma   \rp 
  &  =  &
 \max_{\x\in\mS^d} \lp  \frac{1}{n}\x^T S^TA^TAS \x - \x^T SS^T \x \rp 
 \nonumber \\
  &  =  &
 \max_{\x\in\mS^d,\|S\x\|_2=c,c} \lp  \frac{1}{n}\x^T S^TA^TAS \x - \x^T SS^T \x \rp 
 \nonumber \\
  &  =  &
 \max_{c} \lp  \max_{\x\in\mS^d,\|S\x\|_2=c} \frac{1}{n}\x^T S^TA^TAS \x - c^2\rp 
 \nonumber \\
  &  =  &
 \max_{c} \lp \lp \xi(c)\rp^2 - c^2 \rp.
\end{eqnarray}
To characterize $\xi(c)$ and ultimately $\lambda_n\lp \hat{\Sigma} -\Sigma   \rp $, we utilize \emph{Random duality theory} (RDT). This is done in the following section.

\section{Characterizing $\xi(c)$ via RDT }
\label{sec:xirdt}

We first recall the four key RDT principles \cite{StojnicCSetam09,StojnicRegRndDlt10} (for more on further upgrades and associated algorithmic implications, see, e.g., \cite{Stojnicalgbp25,Stojnicclupsk25}):

\begin{enumerate}

\item \emph{Finding underlying optimization algebraic representation}

\item \emph{Determining random dual} 

\item \emph{Handling random dual}

\item \emph{Double-checking strong random duality.}

\end{enumerate}
We below discuss each of these principles and how they relate to the problem of our interest here.

\subsection{Finding underlying optimization algebraic representation} 
\label{sec:randpr}

We start by observing that $\xi(c)$ can be rewritten in the following  RDT favorable fashion  
\begin{eqnarray}\label{eq:inteq1ab0}
\xi(c) & = &   \frac{1}{\sqrt{n}} \max_{\x\in\mS^d,\|S\x\|_2=c } \sqrt{\x^T S^TA^TAS \x}
\nonumber \\
& = &
\frac{1}{\sqrt{n}}   \max_{\x\in\mS^d ,\|S\x\|_2=c} \| AS\x \|_2
\nonumber \\
& = &
 \frac{1}{\sqrt{n}} \max_{\x\in\mS^d,\|S\x\|_2=c,\y\in\mS^n} \y^TAS\x .
 \end{eqnarray}
Within the RDT the above represents the so-called \emph{random primal}.

\subsection{Determining random dual} 
\label{sec:randdual}

The following theorem establishes the so-called random dual.

\begin{theorem}
\label{thm:thm1}
Consider large $n,d\in\mN$ such that $\lim_{n\rightarrow\infty}\frac{n}{d}\rightarrow\alpha$ and let $A_{ij}\sim \cN(0,1)$ be the independent elements of $A\in\mR^{n\times d}$. Let $\g^{(2)}\in\mR^{d\times 1}$ be a $d$-dimensional vector comprised of independent standard normals. Assume also that $A$ and $\g^{(2)}$ are independent of each other.  For $\s\in\mR_+^{d\times 1}$, diagonal $S\in\mR^{d\times d}$ such that $S=\mbox{diag}(\s)$, and $c\in(\min(\s),\max(\s))$ set
\begin{eqnarray}
   \label{eq:thm1eq1}
 L(c) & = &  
 \max_{\x\in\mS^d,\|S\x\|_2=c} \lp\g^{(2)}\rp^TS\x.
\end{eqnarray}
Let $\xi(c)$ be as in (\ref{eq:inteq1ab0}).  One then has  
\begin{eqnarray}
   \label{eq:thm1eq2}
\mE \xi(c)  \leq c +  \frac{1}{\sqrt{n}} \mE L(c),
\end{eqnarray}
with the righthand side being the so-called random dual.
\end{theorem}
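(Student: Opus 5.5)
Writing $\z = S\x$, the random primal in (\ref{eq:inteq1ab0}) is $\sqrt{n}\,\xi(c)=\max_{\x\in\mS^d,\|S\x\|_2=c,\y\in\mS^n} \y^TAS\x$. This is the maximum of a bilinear Gaussian form over a product of two compact sets, so the plan is to compare it with a decoupled Gaussian process via Gordon's comparison theorem (in its Slepian/Sudakov--Fernique form).

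\textbf{Step 1: the two processes.} Index both processes by pairs $(\x,\y)$ with $\x\in\mS^d$, $\|S\x\|_2=c$, $\y\in\mS^n$. Set
\begin{equation*}
X_{\x,\y}=\y^TAS\x, \qquad Y_{\x,\y}=\|S\x\|_2\,\y^T\g^{(1)}+\|\y\|_2\lp\g^{(2)}\rp^TS\x = c\,\y^T\g^{(1)}+\lp\g^{(2)}\rp^TS\x .
\end{equation*}
Here $\g^{(1)}\in\mR^n$ is standard normal and independent of $\g^{(2)}$ and $A$.

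\textbf{Step 2: covariance comparison.} On the index set, $\|S\x\|_2=c$ and $\|\y\|_2=1$ for every pair. A direct computation gives
\begin{equation*}
\mE X_{\x,\y}X_{\x',\y'}=(\y^T\y')\,(\x^TS^TS\x')
\end{equation*}
and
\begin{equation*}
\mE Y_{\x,\y}Y_{\x',\y'}=c^2\,\y^T\y'+\x^TS^TS\x' .
\end{equation*}
Hence
\begin{equation*}
\mE Y_{\x,\y}Y_{\x',\y'}-\mE X_{\x,\y}X_{\x',\y'}=\lp c^2-\x^TS^TS\x'\rp\lp 1-\y^T\y'\rp \geq 0 .
\end{equation*}
The factor $1-\y^T\y'$ is nonnegative because $\y,\y'$ are unit vectors. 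The factor $c^2-\x^TS^TS\x'$ is nonnegative by Cauchy--Schwarz, since $\x^TS^TS\x'\le\|S\x\|_2\|S\x'\|_2=c^2$. The variances agree on the diagonal, where both sides equal $2c^2$. It follows that
\begin{equation*}
\mE(X_{\x,\y}-X_{\x',\y'})^2\le \mE(Y_{\x,\y}-Y_{\x',\y'})^2 .
\end{equation*}
Sudakov--Fernique then gives $\mE\max X\le \mE\max Y$.

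\textbf{Step 3: evaluating the dual.} The maximum of $Y$ splits into two independent pieces. First, $\max_{\y\in\mS^n} c\,\y^T\g^{(1)}=c\|\g^{(1)}\|_2$, and $\mE\|\g^{(1)}\|_2\le\sqrt{n}$. Second, the remaining term is exactly $L(c)$ from (\ref{eq:thm1eq1}). Dividing by $\sqrt{n}$ yields (\ref{eq:thm1eq2}).

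\textbf{Expected obstacle.} The only delicate point is the sign check in Step 2. It relies on the constraint $\|S\x\|_2=c$ being fixed; this is why $c$ is frozen, and it is what makes the bilinear form comparable to the additive process. The assumption $c\in(\min(\s),\max(\s))$ is needed only to guarantee that the feasible set is nonempty and compact, so that the maxima are attained and finite.
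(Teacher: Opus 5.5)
Your route is the paper's in all essentials: the same decoupled process $c\,\y^T\g^{(1)}+\lp\g^{(2)}\rp^TS\x$, the same factor $\lp c^2-\x^TS^TS\x'\rp\lp 1-\y^T\y'\rp$, and the same evaluation of the dual via $\mE\|\g^{(1)}\|_2\le\sqrt{n}$. However, Step 2, which you yourself flag as the delicate point, is wrong as written. Write $a=\x^TS^TS\x'$, $b=\y^T\y'$, and $X,X',Y,Y'$ for the process values at $(\x,\y)$ and $(\x',\y')$. There are three problems.
\begin{itemize}
\item $\mE X^2=\|\y\|_2^2\|S\x\|_2^2=c^2$, not $2c^2$; only $Y$ has variance $2c^2$. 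This is exactly why the paper adds the independent term $cg$ to the bilinear process: it equalizes the variances without changing the expected maximum.
\item $\mE YY'-\mE XX'=c^2b+a-ab$. This is not $(c^2-a)(1-b)$, and it can be negative: for $\x'=-\x$, $\y'=-\y$ it equals $-3c^2$. The factorized identity holds for $\mE\cG\cG'-\mE\cG_u\cG_u'$ with $\cG=X+cg$ and $\cG_u=Y$, i.e.\ with the opposite sign to what you wrote.
\item Even granting your premises, equal variances together with $\mE YY'\ge\mE XX'$ would give $\mE(Y-Y')^2\le\mE(X-X')^2$. That yields $\mE\max Y\le\mE\max X$, the reverse of what you need.
\end{itemize}
The final displayed inequality comes out right only because two sign slips cancel.

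That conclusion is nonetheless true, and the repair is a one-line direct computation of increments. One has $\mE(X-X')^2=2c^2-2ab$ and $\mE(Y-Y')^2=4c^2-2c^2b-2a$, so
\[
\mE(Y-Y')^2-\mE(X-X')^2=2\lp c^2-a\rp\lp 1-b\rp\ge 0 ,
\]
with nonnegativity following exactly from your Cauchy--Schwarz and unit-norm observations. With this, Sudakov--Fernique (which requires no variance matching) gives $\mE\max X\le\mE\max Y$, and your Step 3 finishes the proof.

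Stated this way, your argument is a slightly leaner variant of the paper's: it dispenses with the auxiliary $cg$. The paper's equal-variance Slepian formulation, by contrast, is the one it reuses in the Gordon min--max comparison for $\xi^-(c)$.
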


\begin{proof} Let $\g^{(1)}\in\mR^{n\times 1}$ and $g\in\mR$ be comprised of standard normals independent among themselves and of all other random variables. We consider two centered Gaussian processes indexed by an array $\cX = \{\x,\y\}$
  \begin{eqnarray}
\label{eq:mr1}
 \cG (\cX) & \triangleq &  \cG (\x,\y)  \triangleq  \sum_{i=1}^n \sum_{j=1}^m A_{i,j}\s_i\x_i\y_j  + c g  \nonumber   \\
 \cG_u (\cX) & \triangleq &  \cG_u (\x,\y)  \triangleq    c\lp\g^{(1)}\rp^T\y +   \lp\g^{(2)}\rp^TS\x .
  \end{eqnarray}
One then clearly must have $\x \in\mR^{d\times 1}$ and $\y \in\mR^{n\times 1}$. Taking two arrays $\cX^{(a_1)}=\{ \x^{(a_1)},\y^{(a_1)}\}$ and $\cX^{(a_2)}=\{ \x^{(a_2)},\y^{(a_2)}\}$ with $\|\x^{(a_i)}\|_2=\|\y^{(a_i)}\|_2=1$ and $\|S\x^{(a_i)}\|_2=c$, $i=1,2$,  we further write
  \begin{eqnarray}
\label{eq:mr2}
\mE \cG (\cX^{(a_1)})\cG (\cX^{(a_2)})   & =  &    \lp \x^{(a_1)} \rp^TS^TS\x^{(a_2)} \lp \y^{(a_1)} \rp^T\y^{(a_2)}  + c^2
\nonumber   \\
\mE \cG_u (\cX^{(a_1)})\cG_u (\cX^{(a_2)})  & =  &  c^2\lp \y^{(a_1)} \rp^T\y^{(a_2)}  +   \lp \x^{(a_1)} \rp^TS^TS\x^{(a_2)} .
  \end{eqnarray}
It is not that difficult to see that the above practically evaluates the average correlation/overlap between two  replicas for both $\cG (\cX)$ and $\cG_u (\cX)$ processes. From (\ref{eq:mr2}), one the has
  \begin{eqnarray}
\label{eq:mr5}
  \mE \cG (\cX^{(a_1)})\cG (\cX^{(a_2)})
 -
\mE \cG_u (\cX^{(a_1)})\cG_u (\cX^{(a_2)} )
 & = &    \lp \x^{(a_1)} \rp^TS^TS\x^{(a_2)} \lp \y^{(a_1)} \rp^T\y^{(a_2)}  + c^2 
\nonumber
\\
& &  - c^2\lp \y^{(a_1)} \rp^T\y^{(a_2)}  -   \lp \x^{(a_1)} \rp^TS^TS\x^{(a_2)}
\nonumber
\\
& = &
 \lp c^2- \lp \x^{(a_1)} \rp^TS^TS\x^{(a_2)}  \rp \lp 1 - \lp \y^{(a_1)} \rp^T\y^{(a_2)}\rp
  \geq   0. \nonumber \\
  \end{eqnarray}
For the completeness, we also observe  
  \begin{eqnarray}
\label{eq:mr5a0}
  \mE \cG (\cX^{(a_1)})\cG (\cX^{(a_1)})
 -
\mE \cG_u (\cX^{(a_1)})\cG_u (\cX^{(a_1)} )
  = 
 \lp c^2- \lp \x^{(a_1)} \rp^T S^TS \x^{(a_1)}  \rp \lp 1 - \lp \y^{(a_1)} \rp^T\y^{(a_1)}\rp
  =   0,
  \end{eqnarray}
  where the last equality follows since $\|S\x^{(a_i)}\|_2=c^2$ and/or $\y^{(a_i)}\in\mS^m$ for $i=1,2$.
  
Digressing for a moment, we recall on Theorem 1.1 from \cite{Gordon85} (the part of the theorem stated below is actually known as Slepian lemma and is introduced earlier in \cite{Slep62}; both results are special cases of concepts discussed in Corollary 3 in \cite{Stojnicgscompyx16}  and in Corollary 4 in \cite{Stojnicgscomp16}).

\begin{theorem}(\cite{Gordon85,Slep62})
\label{thm:Gordonpos1} Let $X_{i}$ and $Y_{i}$, $1\leq i\leq n$, be two centered Gaussian processes which satisfy the following inequalities for all choices of indices
\begin{enumerate}
\item $\mE(X_{i}^2)=\mE(Y_{i}^2)$
\item $\mE(X_{i}X_{l})\leq \mE(Y_{i}Y_{l}), i\neq l$.
\end{enumerate}
 Then
\begin{equation*}
\mE(\min_{i} X_{i})\leq \mE(\min_i Y_{i}) \quad  \Longleftrightarrow \quad \mE(\max_{i} X_{i})\geq \mE(\max_i Y_{i}).
\end{equation*}
\end{theorem}

Noting correspondence $Y\leftrightarrow\cG$ and $X\leftrightarrow\cG_u$, allows us to apply  Theorem \ref{thm:Gordonpos1} to processes $\cG(\cdot)$ and  $\cG_u(\cdot)$. As a result we have
\begin{align}\label{eq:mt5a1a0}
&  &\mE \max_{\cX^{(a_1)}} \cG(\cX)  & \leq \mE \max_{\cX^{(a_1)}} \cG_u(\cX)
\nonumber \\
\Longleftrightarrow & & \mE \max_{\x\in\mS^d,\|S\x\|_2=c,\y\in\mS^n} \lp \sum_{i=1}^n \sum_{j=1}^m A_{i,j}\s_i\x_i\y_j  + c g\rp & \leq \mE \max_{\x\in\mS^d,\|S\x\|_2=c,\y\in\mS^n}  \lp c\lp\g^{(1)}\rp^T\y +   \lp\g^{(2)}\rp^TS\x \rp
\nonumber \\
\Longleftrightarrow & & \mE \max_{\x\in\mS^d,\|S\x\|_2=c,\y\in\mS^n}  \y^TAS\x & \leq \mE \max_{\x\in\mS^d,\|S\x\|_2=c}  \lp\| c\g^{(1)}\|_2 +   \lp\g^{(2)}\rp^TS\x \rp.
 \end{align}
Connecting further (\ref{eq:inteq1ab0}) and (\ref{eq:mt5a1a0}), we then also find
\begin{eqnarray}\label{eq:mt5a1a1}
 \mE  \xi(c)  & \leq &  \frac{c}{\sqrt{n}} \mE\| \g^{(1)}\|_2  +  \mE \max_{\x\in\mS^d,\|S\x\|_2=c }     \lp\g^{(2)}\rp^TS\x
 \nonumber \\
 & \leq &  \frac{c}{\sqrt{n}} \sqrt{\mE\| \g^{(1)}\|_2^2}  +  \frac{1}{\sqrt{n}}  \mE \max_{\x\in\mS^d,\|S\x\|_2=c }     \lp\g^{(2)}\rp^TS\x
  \nonumber \\
 & \leq &  c +  \frac{1}{\sqrt{n}}  \mE \max_{\x\in\mS^d,\|S\x\|_2=c}     \lp\g^{(2)}\rp^TS\x,
 \end{eqnarray}
which, together with (\ref{eq:thm1eq1}), gives  (\ref{eq:thm1eq2}) and completes the proof.
\end{proof}

\begin{remark}
\label{rem:rem0}
  To make the presentation neater and writing easier, we throughout the paper focus on expectations. However, all key statistical objects that we consider trivially concentrate and the results hold in probabilistic sense as well. We skip emphasizing these facts as we progress through the presentation.
\end{remark}

We also note that the first two main RDT principles were rewritten in a generic processes comparisons context  in \cite{qihan24} and the sample covariance error with the bound from (\ref{eq:mt5a1a1}) was considered as a particular application.

\subsection{Handling random dual}
\label{sec:handlerd}

To handle the above random dual, we closely follow the paths traced in \cite{StojnicCSetam09,StojnicICASSP10var,StojnicISIT2010binary}.  Before proceeding with the details, we find it useful to note the role of the random dual with the overall RDT mosaic. Namely, a combination of (\ref{eq:inteq1ab00}), (\ref{eq:thm1eq1}), and (\ref{eq:thm1eq2}) together with concentrations gives
\begin{eqnarray}\label{eq:hrd0}
\mE \lambda_n\lp \hat{\Sigma} -\Sigma   \rp 
   &  =  &
 \max_{c} \lp \lp \mE\xi \rp^2 - c^2 \rp
 \nonumber \\
    &  \leq  &
   \max_{c} \lp \lp c + \mE \frac{1}{\sqrt{n}} \max_{\x\in\mS^d,\|S\x\|_2=c} \lp\g^{(2)}\rp^TS\x \rp^2 - c^2 \rp 
 \nonumber \\
    &  =  &
 \max_{c} \lp \lp c + \frac{1}{\sqrt{n}} \mE L(c) \rp^2 - c^2 \rp .
\end{eqnarray}
 Clearly, $L(c)$ is the key object of interest and we below study it in detail. First, we note 
\begin{eqnarray}
   \label{eq:hrd1}
 L(c) & \triangleq &  
  \max_{\x\in\mS^d,\|S\x\|_2=c} \lp\g^{(2)}\rp^TS\x
  = \max_{\x\in\mS^d,\|S\x\|_2=c} \sum_{i=1}^{d}\g_i^{(2)} \s_i\x_i
  = -\min_{\x\in\mS^d,\|S\x\|_2=c} \sum_{i=1}^{d}\g_i^{(2)} \s_i\x_i .
\end{eqnarray}
Then we have for the Lagrangian 
\begin{eqnarray}
   \label{eq:hrd2}
 \cL  = \sum_{i=1}^{d}\g_i^{(2)} \s_i\x_i + \gamma \sum_{i=1}^{d}\x_i^2 -\gamma + \gamma_0 \sum_{i=1}^{d}\s_i^2\x_i^2 -\gamma_0 c^2.
\end{eqnarray}
Combining (\ref{eq:hrd1}) and  (\ref{eq:hrd2}) and utilizing strong duality we further write
\begin{eqnarray}
   \label{eq:hrd3}
 L(c)  = -\min_{\x}\max_{\gamma,\gamma_0} \cL= -\max_{\gamma,\gamma_0} \min_{\x} \cL.
\end{eqnarray}
To optimize over $\x$ we first find derivatives
\begin{eqnarray}
   \label{eq:hrd4}
 \frac{d\cL}{d\x_i}  =  \g_i^{(2)} \s_i + 2\gamma \x_i + 2\gamma_0 \s_i^2\x_i.
\end{eqnarray}
Equalling the above derivatives to zero gives
\begin{eqnarray}
   \label{eq:hrd5}
 \x_i = -\frac{\g_i^{(2)} \s_i}{2(\gamma + \gamma_0 \s_i^2)}.
\end{eqnarray}
Plugging this back in (\ref{eq:hrd2}), one finds
\begin{eqnarray}
   \label{eq:hrd6}
\min_{\x} \cL = -\frac{1}{4}\sum_{i=1}^{d}\frac{\lp\g_i^{(2)}\rp^2 \s_i^2}{\gamma + \gamma_0 \s_i^2} 
 -\gamma  -\gamma_0 c^2.
\end{eqnarray}
A combination of (\ref{eq:hrd3}) and  (\ref{eq:hrd6}) gives
\begin{eqnarray}
   \label{eq:hrd7}
 L(c)  & = & -\max_{\gamma,\gamma_0} \min_{\x} \cL
 \nonumber \\
& = & -\max_{\gamma,\gamma_0} 
 \lp    -\frac{1}{4}\sum_{i=1}^{d}\frac{\lp\g_i^{(2)}\rp^2 \s_i^2}{\gamma  + \gamma_0 \s_i^2} 
 -\gamma  -\gamma_0 c^2
 \rp
  \nonumber \\
& = & \min_{\gamma,\gamma_0} 
 \lp    \frac{1}{4}\sum_{i=1}^{d}\frac{\lp\g_i^{(2)}\rp^2 \s_i^2}{\gamma  + \gamma_0 \s_i^2} 
 +\gamma  +\gamma_0 c^2
 \rp
  \nonumber \\
& = & \min_{\gamma_x,\gamma_0} 
 \lp    \frac{1}{4\gamma_0}\sum_{i=1}^{d}\frac{\lp\g_i^{(2)}\rp^2 \s_i^2}{\gamma_x  + \s_i^2} 
 + \gamma_0(\gamma_x  + c^2)
 \rp
  \nonumber \\
& = & \min_{\gamma_x} 
 \sqrt{\sum_{i=1}^{d}\frac{\lp\g_i^{(2)}\rp^2 \s_i^2}{\gamma_x + \s_i^2} 
 (\gamma_x  + c^2) }
\end{eqnarray}
Law of large numbers and concentrations gives
\begin{eqnarray}
   \label{eq:hrd8}
\lim_{d\rightarrow\infty} \frac{1}{\sqrt{d}} \mE L(c)  & = & \lim_{d\rightarrow\infty} \sqrt{\min_{\gamma_x} \bar{L}},
\end{eqnarray}
where
\begin{eqnarray}
   \label{eq:hrd9}
\bar{L} = 
  \lp \frac{1}{d}\sum_{i=1}^{d}\frac{  \s_i^2}{\gamma_x  + \s_i^2} 
 (\gamma_x  + c^2) \rp.
\end{eqnarray}
Combining  (\ref{eq:hrd0}), (\ref{eq:hrd8}), and  (\ref{eq:hrd9}), we find
\begin{eqnarray}\label{eq:hrd10}
\lim_{d\rightarrow \infty}\mE \lambda_n\lp \hat{\Sigma} -\Sigma   \rp 
     &  \leq  &
 \max_{c} \lp \lp c + \lim_{d\rightarrow \infty } \frac{1}{\sqrt{n}}\mE L(c)   \rp^2 - c^2 \rp 
 =
\lim_{d\rightarrow\infty}   \max_{c}\min_{\gamma_x}  \delta_u(\alpha)  ,
\end{eqnarray}
where
\begin{eqnarray}\label{eq:hrd11}
\delta_u(\alpha)  \triangleq  \lp c +  \frac{1}{\sqrt{\alpha}}\sqrt{ \bar{L}}   \rp^2 - c^2  = 2 \frac{c\sqrt{\bar{L}}}{\sqrt{\alpha}}  
+
 \frac{\bar{L}}{\alpha} .
\end{eqnarray}
Computing $\gamma_x$ derivative gives
\begin{eqnarray}
   \label{eq:hrd12}
\frac{d\bar{L}}{d\gamma_x} = 
   -\frac{1}{d}\sum_{i=1}^{d}\frac{  \s_i^2}{(\gamma_x  + \s_i^2)^2} 
 (\gamma_x  + c^2) 
+ \frac{1}{d}\sum_{i=1}^{d}\frac{  \s_i^2}{\gamma_x  + \s_i^2} 
=
   \frac{1}{d}\sum_{i=1}^{d}\frac{ \s_i^2( \s_i^2 -c^2)}{(\gamma_x  + \s_i^2)^2} .
\end{eqnarray}
Computing $c$ derivative gives
\begin{eqnarray}
   \label{eq:hrd13}
\frac{d\delta_u(\alpha)}{d c} 
& = & 
 \frac{2\sqrt{\bar{L}}}{\sqrt{\alpha}} + \frac{c}{\sqrt{\alpha \bar{L}}}  
  \lp \frac{2c}{d}\sum_{i=1}^{d}\frac{  \s_i^2}{\gamma_x  + \s_i^2} 
  \rp
 +
    \frac{2}{\alpha d}\sum_{i=1}^{d}\frac{  \s_i^2}{\gamma_x  + \s_i^2} 
\nonumber \\
&  = &
\frac{2}{\sqrt{\alpha}}
\lp
 \frac{\bar{L}}{\sqrt{\bar{L}}} + \frac{c}{\sqrt{ \bar{L}}}  
  \lp \frac{c}{d}\sum_{i=1}^{d}\frac{  \s_i^2}{\gamma_x  + \s_i^2} 
  \rp
 +
    \frac{c}{\sqrt{\alpha} d}\sum_{i=1}^{d}\frac{  \s_i^2}{\gamma_x  + \s_i^2}  
\rp   
\nonumber \\
&  = &
\frac{2}{\sqrt{\alpha}}
\lp
 \frac{1}{\sqrt{\bar{L}}} \lp \frac{1}{d}\sum_{i=1}^{d}\frac{  \s_i^2}{\gamma_x  + \s_i^2} 
 (\gamma_x  + c^2) \rp + \frac{c}{\sqrt{ \bar{L}}}  
  \lp \frac{c}{d}\sum_{i=1}^{d}\frac{  \s_i^2}{\gamma_x  + \s_i^2} 
  \rp
 +
    \frac{c}{\sqrt{\alpha} d}\sum_{i=1}^{d}\frac{  \s_i^2}{\gamma_x  + \s_i^2}  
\rp   
\nonumber \\
&  = &
\frac{2}{\sqrt{\alpha}}\frac{1}{d}\sum_{i=1}^{d}\frac{  \s_i^2}{\gamma_x  + \s_i^2}
\lp
 \frac{\gamma_x  + c^2 }{\sqrt{\bar{L}}}  + \frac{c^2}{\sqrt{ \bar{L}}}  
 +
    \frac{c}{\sqrt{\alpha}}  
\rp   
\nonumber \\
&  = &
\frac{2}{\sqrt{\alpha}}\frac{1}{d}\sum_{i=1}^{d}\frac{  \s_i^2}{\gamma_x  + \s_i^2}
\lp
 \frac{\gamma_x  + 2c^2 }{\sqrt{\bar{L}}} 
  +
    \frac{c}{\sqrt{\alpha}}  
\rp.   
 \end{eqnarray}
Equalling the above derivative to zero gives
\begin{eqnarray}
   \label{eq:hrd14}
\lp
 \frac{\gamma_x  + 2c^2 }{\sqrt{\bar{L}}} 
  +
    \frac{c}{\sqrt{\alpha}}  
\rp = 0 \quad \quad \Longrightarrow \quad \quad  \alpha\lp \gamma_x  + 2c^2 \rp^2 = c^2\bar{L}
   = c^2  z 
 (\gamma_x  + c^2) ,
  \end{eqnarray}
where
\begin{eqnarray}
   \label{eq:hrd15}
 z =  \frac{1}{d}\sum_{i=1}^{d}\frac{  \s_i^2}{\gamma_x  + \s_i^2} .
  \end{eqnarray}
Keeping in mind $c\geq 0$, we also observe that equalling the last term in (\ref{eq:hrd13}) to zero, additionally implies 
\begin{eqnarray}
   \label{eq:hrd15a0}
\lp
 \frac{\gamma_x  + 2c^2 }{\sqrt{\bar{L}}} 
  +
    \frac{c}{\sqrt{\alpha}}  
\rp = 0 \quad \quad \Longrightarrow \quad \quad   
\gamma_x\leq 0 \quad  \mbox{and }  \gamma_x+ 2 c^2\leq 0 .
  \end{eqnarray}
One then finds the following equivalent to (\ref{eq:hrd14})
\begin{eqnarray}
   \label{eq:hrd16}
c^4  +c^2\gamma_x -\frac{\gamma_x^2\alpha}{z-4\alpha} = 0.
  \end{eqnarray}
Solving for $c^2$ gives
\begin{eqnarray}
   \label{eq:hrd17}
c^2 = \frac{-\gamma_x \pm \sqrt{ \gamma_x^2  +4\frac{\gamma_x^2\alpha}{z-4\alpha}  }}{2}
= \gamma_x\frac{- 1 \pm \mbox{sign}(\gamma_x) \sqrt{ 1  +4\frac{\alpha}{z-4\alpha}  }}{2}
= \gamma_x\frac{- 1 \pm \mbox{sign}(\gamma_x) \sqrt{ \frac{z}{z-4\alpha}  }}{2}.
  \end{eqnarray}
 Taking into account (\ref{eq:hrd15a0}), we then have particular choice of signs that gives
\begin{eqnarray}
   \label{eq:hrd17a0}
c^2  = \gamma_x\frac{- 1 + \sqrt{ \frac{z}{z-4\alpha}  }}{2}.
  \end{eqnarray}
  
We first set
\begin{eqnarray}
   \label{eq:hrd17a1}
 \phi_1(\gamma_x)  \triangleq  \frac{1}{d}\sum_{i=1}^{d}\frac{  \s_i^4}{\lp \gamma_x  + \s_i^2 \rp^2}, 
 \quad \quad
 \phi_2(\gamma_x)  \triangleq  \frac{1}{d}\sum_{i=1}^{d}\frac{  \s_i^2}{\lp\gamma_x  + \s_i^2\rp^2}, 
   \end{eqnarray}
and find
\begin{eqnarray}
   \label{eq:hrd18}
  z & = &   \frac{1}{d}\sum_{i=1}^{d}\frac{  \s_i^2}{\gamma_x  + \s_i^2} 
  = \gamma_x\phi_2(\gamma_x) + \phi_1(\gamma_x)
  \nonumber \\
  \bar{L}  & = &  
   \frac{1}{d}\sum_{i=1}^{d}\frac{  \s_i^2}{\gamma_x  + \s_i^2} 
 (\gamma_x  + c^2) = z(\gamma_x  + c^2) = (\gamma_x\phi_2(\gamma_x) + \phi_1(\gamma_x))(\gamma_x  + c^2).
  \end{eqnarray}
After additionally setting
\begin{eqnarray}
   \label{eq:hrd19}
\phi_3(\gamma_x)  
= \gamma_x\frac{- 1 + \sqrt{ \frac{z}{z-4\alpha}  }}{2}
= \gamma_x\frac{- 1  + \sqrt{ \frac{\gamma_x\phi_2(\gamma_x) + \phi_1(\gamma_x)}{\gamma_x\phi_2(\gamma_x) + \phi_1(\gamma_x)-4\alpha}  }}{2},
  \end{eqnarray}
the following theorem summarize handling random dual.

\begin{theorem}
\label{thm:thm3}
Assume the setup of Theorem \ref{thm:thm1} with $\s$ and $\alpha$ such that all quantities below are in $\mR$ and $\hat{\delta}_u(\alpha) \geq 0$. For $\phi_1(\cdot)$, $\phi_2(\cdot)$, and $\phi_3(\cdot)$ from (\ref{eq:hrd17a1}) and (\ref{eq:hrd19}),  let $\hat{\gamma}_x$ satisfy
\begin{eqnarray}
   \label{eq:thm3eq1}
\lim_{d\rightarrow\infty} \lp \phi_1(\hat{\gamma}_x) -\phi_2(\hat{\gamma}_x)\phi_3(\hat{\gamma}_x) \rp = 0
 \quad \quad 
\Longleftrightarrow 
 \quad \quad 
\lim_{d\rightarrow\infty}  \lp  \hat{\gamma}_x \phi_2(\hat{\gamma}_x) - \phi_1(\hat{\gamma}_x) \frac{ 2\sqrt{\alpha}-\sqrt{\phi_1(\hat{\gamma}_x) } } {\sqrt{\phi_1(\hat{\gamma}_x)}-\sqrt{\alpha}} \rp  =0.
\end{eqnarray}
Also, let 
\begin{eqnarray}
   \label{eq:thm3eq1a0}
\hat{c} = \lim_{d\rightarrow\infty}  \sqrt{\phi_3(\hat{\gamma}_x)} 
= \lim_{d\rightarrow\infty}  \sqrt{ \frac{ \phi_1(\hat{\gamma}_x) }  { \phi_2(\hat{\gamma}_x) }  } 
=  
\lim_{d\rightarrow\infty}  \sqrt{\frac{\hat{\gamma}_x (\sqrt{\phi_1(\hat{\gamma}_x)} -\sqrt{\alpha}) } { 2\sqrt{\alpha}- \sqrt{\phi_1(\hat{\gamma}_x)}  }} .
\end{eqnarray}
One then has  
\begin{equation}\label{eq:thm3eq2}
\lim_{d\rightarrow \infty}\mE \lambda_n\lp \hat{\Sigma} -\Sigma   \rp 
       \leq  \lim_{d\rightarrow\infty}  \max_{ c } \min_{ \gamma_x}    \delta_u(\alpha) = \hat{\delta}_u(\alpha) ,
       \end{equation}
       where
\begin{equation}\label{eq:thm3eq3}
\hat{\delta}_u(\alpha) =         \lim_{d\rightarrow\infty} 
\frac{\hat{\gamma}_x\sqrt{\phi_1(\hat{\gamma}_x)}} {\sqrt{\phi_1(\hat{\gamma}_x)} -\sqrt{\alpha} } .
  \end{equation}
\end{theorem}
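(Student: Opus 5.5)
The plan is to treat Theorem \ref{thm:thm3} as the stationarity analysis of the saddle point already set up in (\ref{eq:hrd10})--(\ref{eq:hrd19}). The inequality in (\ref{eq:thm3eq2}) is exactly (\ref{eq:hrd10}), which follows from Theorem \ref{thm:thm1}, the representation (\ref{eq:inteq1ab00}), the explicit dual (\ref{eq:hrd7}), and concentration as in Remark \ref{rem:rem0}. What remains is to identify the value of $\max_c\min_{\gamma_x}\delta_u(\alpha)$. To do this, I will write down the two first-order conditions and solve them jointly in terms of $\phi_1,\phi_2$.

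\emph{Step 1: the $\gamma_x$ condition.} Since $\delta_u$ is increasing in $\bar L$, the inner minimization over $\gamma_x$ is the minimization of $\bar L$. By (\ref{eq:hrd12}), stationarity reads $\frac{1}{d}\sum_i \s_i^2(\s_i^2-c^2)/(\gamma_x+\s_i^2)^2=0$, i.e. $\phi_1(\gamma_x)=c^2\phi_2(\gamma_x)$. This gives the middle expression $\hat c^2=\phi_1/\phi_2$ in (\ref{eq:thm3eq1a0}).

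\emph{Step 2: the $c$ condition.} The $c$ condition was already reduced in (\ref{eq:hrd14})--(\ref{eq:hrd17a0}) to $c^2=\phi_3(\gamma_x)$. Imposing both conditions gives $\phi_1-\phi_2\phi_3=0$, which is the first form of (\ref{eq:thm3eq1}).

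\emph{Step 3: the alternative forms.} To get the second form, I will use $z=\gamma_x\phi_2+\phi_1$ together with $c^2=\phi_1/\phi_2$ and substitute into (\ref{eq:hrd14}), written as $\alpha(\gamma_x+2c^2)^2=c^2z(\gamma_x+c^2)$. Note that $z(\gamma_x+c^2)=(\gamma_x\phi_2+\phi_1)(\gamma_x\phi_2+\phi_1)/\phi_2=z^2/\phi_2$, so the equation becomes $\alpha(\gamma_x+2c^2)^2=c^2z^2/\phi_2=\phi_1 z^2/\phi_2^2$. Taking square roots with the sign dictated by (\ref{eq:hrd15a0}) gives $\sqrt{\alpha}\,(\gamma_x\phi_2+2\phi_1)=\sqrt{\phi_1}\,(\gamma_x\phi_2+\phi_1)$, up to the orientation of signs. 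This relation is linear in $\gamma_x\phi_2$, and solving it yields $\gamma_x\phi_2=\phi_1(2\sqrt{\alpha}-\sqrt{\phi_1})/(\sqrt{\phi_1}-\sqrt{\alpha})$, which is the second form in (\ref{eq:thm3eq1}). Dividing $\gamma_x$ by this expression for $\phi_2$ then gives the third expression for $\hat c^2$ in (\ref{eq:thm3eq1a0}).

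\emph{Step 4: the optimal value.} I will plug into (\ref{eq:hrd11}), $\delta_u=2c\sqrt{\bar L}/\sqrt{\alpha}+\bar L/\alpha$, with $\sqrt{\bar L}=z/\sqrt{\phi_2}$ and $c=\sqrt{\phi_1/\phi_2}$. This gives $\delta_u=\frac{z}{\phi_2}\bigl(2\sqrt{\phi_1}/\sqrt{\alpha}+z/\alpha\bigr)$. Using the Step 3 relation $z=\sqrt{\alpha}(\gamma_x\phi_2+2\phi_1)/\sqrt{\phi_1}$ and $\gamma_x\phi_2$ from above, $z=\phi_1\sqrt{\alpha}/(\sqrt{\phi_1}-\sqrt{\alpha})$ up to sign. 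Substituting and simplifying should collapse to $\gamma_x\sqrt{\phi_1}/(\sqrt{\phi_1}-\sqrt{\alpha})$, which is (\ref{eq:thm3eq3}).

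\emph{Main obstacle.} I expect the algebra to be routine but the justification to be the hard part. First, the stationary point must be the actual $\max_c\min_{\gamma_x}$ and not merely a critical point. Second, the branch and sign choices ($\gamma_x\le 0$, $\gamma_x+2c^2\le0$, the admissible range of $\gamma_x$ keeping the denominators in $\phi_1,\phi_2$ positive, and $c\in(\min\s,\max\s)$) must be consistent. My first approach is to use convexity of $\bar L$ in the admissible $\gamma_x$ range, which follows from (\ref{eq:hrd7}) being a min of convex-type expressions. I would then argue that the resulting function of $c$ is unimodal, relying on the theorem's hypotheses that all quantities are real and $\hat\delta_u\ge0$ to exclude boundary maxima. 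Limits in $d$ are handled by concentration and the continuity of $\phi_1,\phi_2$.
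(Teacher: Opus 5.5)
Your route is the paper's: the inequality in (\ref{eq:thm3eq2}) is (\ref{eq:hrd10}), $\gamma_x$-stationarity gives $c^2=\phi_1/\phi_2$, $c$-stationarity gives $c^2=\phi_3(\gamma_x)$, and the rest is algebra.

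Your Step 3 is a genuine shortcut. The paper rewrites $\phi_1=\phi_2\phi_3$ as $z+\phi_1=(z-\phi_1)\sqrt{z/(z-4\alpha)}$, squares, and picks a root of a quadratic in $z$. Your identity $z(\gamma_x+c^2)=z^2/\phi_2$ instead makes (\ref{eq:hrd14}) linear after one square root. The sign there is forced, however, not a matter of orientation. At the stationary point $\gamma_x+c^2=z/\phi_2$, and (\ref{eq:hrd15a0}) gives $\gamma_x+c^2\le -c^2<0$, so $z<0$. Hence $\sqrt{\alpha}(\gamma_x+2c^2)=-c\sqrt{\bar{L}}$ reads exactly $\sqrt{\alpha}(z+\phi_1)=\sqrt{\phi_1}\,z$. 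This gives $z=\phi_1\sqrt{\alpha}/(\sqrt{\phi_1}-\sqrt{\alpha})$ with $\sqrt{\phi_1}<\sqrt{\alpha}$, which yields the second form of (\ref{eq:thm3eq1}) and the third expression for $\hat{c}$.

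The step that fails as written is Step 4. Since $z<0$, you have $\sqrt{\bar{L}}=|z|/\sqrt{\phi_2}=-z/\sqrt{\phi_2}$, not $z/\sqrt{\phi_2}$. With your sign, $\delta_u=z(z+2\sqrt{\alpha\phi_1})/(\alpha\phi_2)$. Substituting the $z$ above and $\phi_2=(z-\phi_1)/\hat{\gamma}_x$ gives $\hat{\gamma}_x\sqrt{\phi_1}\,(3\sqrt{\phi_1}-2\sqrt{\alpha})/\bigl((2\sqrt{\alpha}-\sqrt{\phi_1})(\sqrt{\phi_1}-\sqrt{\alpha})\bigr)$, which differs from (\ref{eq:thm3eq3}) because $\phi_1\neq\alpha$. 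Your hedge \emph{up to sign} on $z$ cannot repair this, since $z$ is already pinned down. The correct expression is the one in the paper's evaluation (\ref{eq:prthm31}), with its $-\frac{2}{\sqrt{\alpha}}$ term: $\delta_u=z(z-2\sqrt{\alpha\phi_1})/(\alpha\phi_2)$. Using $z-2\sqrt{\alpha\phi_1}=\sqrt{\alpha\phi_1}(2\sqrt{\alpha}-\sqrt{\phi_1})/(\sqrt{\phi_1}-\sqrt{\alpha})$ and $z-\phi_1=\phi_1(2\sqrt{\alpha}-\sqrt{\phi_1})/(\sqrt{\phi_1}-\sqrt{\alpha})$, it collapses to $\hat{\gamma}_x\sqrt{\phi_1}/(\sqrt{\phi_1}-\sqrt{\alpha})$.

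Finally, the paper does not prove that the stationary point is the actual $\max_c\min_{\gamma_x}$; it takes this for granted (cf.\ Remark \ref{rem:rem1}). Your "main obstacle" discussion is therefore extra relative to the paper. If you pursue it, two corrections apply:
\begin{itemize}
\item $\bar{L}$ is not convex in $\gamma_x$ in general, since $\bar{L}''=\frac{2}{d}\sum_i\s_i^2(c^2-\s_i^2)/(\gamma_x+\s_i^2)^3$ has no fixed sign.
\item The relevant solution has $\hat{\gamma}_x\le-2\hat{c}^2<-\min_i\s_i^2$, i.e., it lies on the $\gamma_0<0$ branch of (\ref{eq:hrd7}), not in the range $\gamma_x>-\min_i\s_i^2$.
\end{itemize}
What does work is the joint convexity of $\frac14\sum_i(\g_i^{(2)})^2\s_i^2/(\gamma+\gamma_0\s_i^2)+\gamma+\gamma_0c^2$ on the convex cone $\{\gamma+\gamma_0\s_i^2>0\ \forall i\}$, which contains both branches. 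By this convexity, a stationary point in $\gamma_x$ (with optimal $\gamma_0$) is a stationary point of a convex function, hence a global minimizer.
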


\begin{proof} From (\ref{eq:hrd9})-(\ref{eq:hrd11}) and the above discussion, we first have for $\gamma_x=\hat{\gamma}_x$ and $c=\hat{c}$
\begin{eqnarray}
\label{eq:prthm31}
\delta_u(\alpha)  & = &    2 \frac{c\sqrt{\bar{L}}}{\sqrt{\alpha}}  
+
 \frac{\bar{L}}{\alpha}  
 \nonumber \\
   &=&  
   \lp 
   \frac{2}{\sqrt{\alpha}} \sqrt{\phi_3(\hat{\gamma}_x)} \sqrt{(\hat{\gamma}_x\phi_2(\hat{\gamma}_x) + \phi_1(\hat{\gamma}_x))(\hat{\gamma}_x  + \phi_3(\hat{\gamma}_x)) } + \frac{(\hat{\gamma}_x\phi_2(\hat{\gamma}_x) + \phi_1(\hat{\gamma}_x))(\hat{\gamma}_x  + \phi_3(\hat{\gamma}_x) )}{\alpha} \rp 
    \nonumber \\
   &=& 
      \lp 
   -\frac{2}{\sqrt{\alpha}} \sqrt{\frac{ \phi_3(\hat{\gamma}_x) }{ \phi_2(\hat{\gamma}_x)   }  } (\hat{\gamma}_x\phi_2(\hat{\gamma}_x) + \phi_1(\hat{\gamma}_x)) + \frac{(\hat{\gamma}_x\phi_2(\hat{\gamma}_x) + \phi_1(\hat{\gamma}_x))^2}{\alpha \phi_2(\hat{\gamma}_x) } \rp 
   \nonumber \\
   &=& 
      \lp 
   -\frac{2}{\sqrt{\alpha}} \sqrt{\frac{ \phi_1(\hat{\gamma}_x) }{ (\phi_2(\hat{\gamma}_x) )^2  }  } (\hat{\gamma}_x\phi_2(\hat{\gamma}_x) + \phi_1(\hat{\gamma}_x)) + \frac{(\hat{\gamma}_x\phi_2(\hat{\gamma}_x) + \phi_1(\hat{\gamma}_x))^2}{\alpha \phi_2(\hat{\gamma}_x) } \rp  
   \nonumber \\
   &=& 
       \frac{\hat{\gamma}_x\phi_2(\hat{\gamma}_x) + \phi_1(\hat{\gamma}_x)}{\alpha \phi_2(\hat{\gamma}_x) } 
      \lp 
    -2\sqrt{\alpha \phi_1(\hat{\gamma}_x)  } + \hat{\gamma}_x\phi_2(\hat{\gamma}_x) + \phi_1(\hat{\gamma}_x)   \rp
       \nonumber \\
   &=& 
       \frac{z}{\alpha \phi_2(\hat{\gamma}_x) } 
      \lp 
    -2\sqrt{\alpha \phi_1(\hat{\gamma}_x)  } + z   \rp
          \nonumber \\
   &=& 
       \frac{\gamma_x z}{\alpha (z- \phi_1(\hat{\gamma}_x)) } 
      \lp 
    -2\sqrt{\alpha \phi_1(\hat{\gamma}_x)  } + z   \rp,
\end{eqnarray}
  where we utilize $z$ from (\ref{eq:hrd18}). We then further note that $z$ can be expressed as a function of $\phi_1(\hat{\gamma}_x)$. Namely, from (\ref{eq:hrd19}), one has
\begin{align}
   \label{eq:prfthm32}
&  & \phi_3(\gamma_x)    
& =  \gamma_x\frac{- 1  + \sqrt{ \frac{\gamma_x\phi_2(\gamma_x) + \phi_1(\gamma_x)}{\gamma_x\phi_2(\gamma_x) + \phi_1(\gamma_x)-4\alpha}  }}{2}
\nonumber \\
\Longleftrightarrow  & & 
2\phi_1(\gamma_x)   &  
=  - \gamma_x\phi_2(\gamma_x)   + \gamma_x\phi_2(\gamma_x) \sqrt{ \frac{\gamma_x\phi_2(\gamma_x) + \phi_1(\gamma_x)}{\gamma_x\phi_2(\gamma_x) + \phi_1(\gamma_x)-4\alpha}  }
\nonumber \\
\Longleftrightarrow  & & 
z+ \phi_1(\gamma_x)   &  
=  (z- \phi_1(\gamma_x)  )  \sqrt{ \frac{ z}{z-4\alpha}  },
  \end{align}
  where we again utilize $z$ from (\ref{eq:hrd18}).  Solving over $z$ gives
\begin{eqnarray}
   \label{eq:prfthm33}
z = \frac{\alpha \phi_1(\hat{\gamma}_x) + \phi_1(\hat{\gamma}_x) \sqrt{\alpha \phi_1(\hat{\gamma}_x)  }}{\phi_1(\hat{\gamma}_x)-\alpha} 
= \frac{ \phi_1(\hat{\gamma}_x) \sqrt{\alpha} } {\sqrt{\phi_1(\hat{\gamma}_x)}-\sqrt{\alpha}} .
\end{eqnarray}
With $z$ from  (\ref{eq:hrd18}), we also have
\begin{align}
   \label{eq:prfthm33a0}
 & & \gamma_x \phi_2(\hat{\gamma}_x) + \phi_1(\hat{\gamma}_x)  &   
= \frac{ \phi_1(\hat{\gamma}_x) \sqrt{\alpha} } {\sqrt{\phi_1(\hat{\gamma}_x)}-\sqrt{\alpha}} 
\nonumber \\
\Longleftrightarrow & & \gamma_x \phi_2(\hat{\gamma}_x) + \phi_1(\hat{\gamma}_x)  - \frac{ \phi_1(\hat{\gamma}_x) \sqrt{\alpha} } {\sqrt{\phi_1(\hat{\gamma}_x)}-\sqrt{\alpha}} & =0
\nonumber \\
\Longleftrightarrow & & \gamma_x \phi_2(\hat{\gamma}_x) - \phi_1(\hat{\gamma}_x) \frac{ 2\sqrt{\alpha}- \sqrt{ \phi_1(\hat{\gamma}_x) } } {\sqrt{\phi_1(\hat{\gamma}_x)}-\sqrt{\alpha}} & =0,
\end{align}
which matches the righthand side condition in (\ref{eq:thm3eq1}) as well as the last equality in  (\ref{eq:thm3eq1a0}). Finally, plugging $z$ back in (\ref{eq:prthm31}) also gives
\begin{eqnarray}
\label{eq:prthm34}
\delta_u(\alpha)  
& = &     
       \frac{\gamma_x \frac{ \phi_1(\hat{\gamma}_x) \sqrt{\alpha} } {\sqrt{\phi_1(\hat{\gamma}_x)}-\sqrt{\alpha}}}{\alpha \lp \frac{ \phi_1(\hat{\gamma}_x) \sqrt{\alpha} } {\sqrt{\phi_1(\hat{\gamma}_x)}-\sqrt{\alpha}}- \phi_1(\hat{\gamma}_x) \rp } 
      \lp 
    -2\sqrt{\alpha \phi_1(\hat{\gamma}_x)  } + \frac{ \phi_1(\hat{\gamma}_x) \sqrt{\alpha} } {\sqrt{\phi_1(\hat{\gamma}_x)}-\sqrt{\alpha}}   \rp
  \nonumber \\
  & = &     
       \frac{\gamma_x \frac{ \sqrt{\phi_1(\hat{\gamma}_x)}  } {\sqrt{\phi_1(\hat{\gamma}_x)}-\sqrt{\alpha}}}{  \frac{  \sqrt{\alpha} } {\sqrt{\phi_1(\hat{\gamma}_x)}-\sqrt{\alpha}}- 1  } 
      \lp 
    -2  + \frac{ \sqrt{\phi_1(\hat{\gamma}_x)} } {\sqrt{\phi_1(\hat{\gamma}_x)}-\sqrt{\alpha}}   \rp  
  \nonumber \\
  & = &     
       \frac{\gamma_x \frac{ \sqrt{\phi_1(\hat{\gamma}_x)}  } {\sqrt{\phi_1(\hat{\gamma}_x)}-\sqrt{\alpha}}}{  \frac{  \sqrt{\alpha} } {\sqrt{\phi_1(\hat{\gamma}_x)}-\sqrt{\alpha}}- 1  } 
      \lp 
    -1  + \frac{ \sqrt{\alpha} } {\sqrt{\phi_1(\hat{\gamma}_x)}-\sqrt{\alpha}}   \rp  
  \nonumber \\
  & = &     
        \gamma_x \frac{ \sqrt{\phi_1(\hat{\gamma}_x)}  } {\sqrt{\phi_1(\hat{\gamma}_x)}-\sqrt{\alpha}} ,       
\end{eqnarray}
which matches (\ref{eq:thm3eq3}) and completes the proof.
\end{proof}

\begin{remark}
\label{rem:rem1}
It should be noted that $\max_{\hat{c}} \min_{\hat{\gamma}_x} $ in (\ref{eq:thm3eq2}) is added for the completeness (just in case there are multiple solutions to (\ref{eq:thm3eq1})). Otherwise, (\ref{eq:thm3eq1}) provides full characterization of the optimal $\gamma_x$ and, via $c=\lim_{d\rightarrow\infty}  \sqrt{\phi_3(\hat{\gamma}_x)} $, of the optimal $c$. Also, constraining $\s$ and $\alpha$ is added for esthetic reasons to avoid sidetracking presentation with analyses of special cases that bring no conceptual novelty.
\end{remark}

\begin{remark}
\label{rem:rem1a}
Even though the above analyses is not well tailored for $S=I$ scenario (uncorrelated Gaussians), it manages to capture it. To see this, one observes that, for $S=I$, (\ref{eq:hrd17a1})  gives
\begin{eqnarray}
   \label{eq:rem1ahrd17a1}
 \phi_1(\gamma_x)  \triangleq  \frac{1}{d}\sum_{i=1}^{d}\frac{  \s_i^4}{\lp \gamma_x  + \s_i^2 \rp^2}
 =\frac{1}{ (\gamma_x+1)^2} 
 =   \frac{1}{d}\sum_{i=1}^{d}\frac{  \s_i^2}{\lp\gamma_x  + \s_i^2\rp^2}
 =  \phi_2(\gamma_x). 
   \end{eqnarray}
From the second condition in (\ref{eq:thm3eq1}), one then has
\begin{eqnarray}
   \label{eq:rem1athm3eq1}
 \lim_{d\rightarrow\infty}  \lp  \hat{\gamma}_x \phi_2(\hat{\gamma}_x) - \phi_1(\hat{\gamma}_x) \frac{ 2\sqrt{\alpha}-\sqrt{\phi_1(\hat{\gamma}_x) } } {\sqrt{\phi_1(\hat{\gamma}_x)}-\sqrt{\alpha}} \rp  =0.
\quad 
\Longleftrightarrow
\quad
 \lim_{d\rightarrow\infty}  \lp \sqrt{\phi_1(\hat{\gamma}_x)} (\hat{\gamma}_x +1) - (\hat{\gamma}_x +2)\sqrt{\alpha} \rp =0.
\end{eqnarray}
A combination of  (\ref{eq:rem1ahrd17a1}) and (\ref{eq:rem1athm3eq1}) gives
\begin{eqnarray}
   \label{eq:rem1athm3eq1a0}
\frac{\hat{\gamma}_x +1}{|\hat{\gamma}_x +1|} = (\hat{\gamma}_x +2)\sqrt{\alpha} .
\end{eqnarray}
By (\ref{eq:hrd15a0}), $\hat{\gamma}_x\leq -2c^2=-2$ and we further find
\begin{eqnarray}
   \label{eq:rem1athm3eq1a1}
 -\frac{1}{\sqrt{\alpha}} -2 = \hat{\gamma}_x .
\end{eqnarray}
From (\ref{eq:thm3eq3}) and (\ref{eq:rem1athm3eq1a1}), one then obtains
\begin{equation}\label{eq:rem1athm3eq3}
\hat{\delta}_u(\alpha) =        
\frac{\hat{\gamma}_x } {1-\sqrt{\alpha} |\hat{\gamma}_x +1| } 
=
\frac{-\frac{1}{\sqrt{\alpha}} -2}{1-\sqrt{\alpha} | -\frac{1}{\sqrt{\alpha}} -1 | }
=\frac{2}{\sqrt{\alpha}} +\frac{1}{\alpha} =  \frac{1}{\alpha}\lp \sqrt{\alpha} +1 \rp^2  -1,
  \end{equation}
where the most righthand side is the leading Wishart eigenvalue minus one which is exactly what one should get in the so-called isotropic case.
\end{remark}

Remark \ref{rem:rem1a} suggests that the above analysis and the resulting bounds given in Theorem \ref{thm:thm3} might be tight. We discuss this in more detail next.

\subsection{Double-checking strong random duality}
\label{sec:strrd}

Theorem \ref{thm:thm1} established the random dual which upper-bounds $\mE\xi(c)$. Below we discuss complementary lower bounds.

\subsubsection{Bilinear-quadratic mechanism}
\label{sec:bilinquad}

The following theorem establishes complementary random dual which lower-bounds $\mE \xi(c)$. It relies on a bilinear-quadratic comparative mechanism and is the key component that enables the whole machinery developed in the paper to work.

\begin{theorem}
\label{thm:strthm1}
Consider large $n,d\in\mN$ such that $\lim_{n\rightarrow\infty}\frac{n}{d}\rightarrow\alpha$ and let the elements of $A\in\mR^{n\times d}$ and  $G^{(2)}\in\mR^{d\times d}$ be independent standard normal ($A$ and $G^{(2)}$ are independent of each other as well). For $\s\in\mR_+^{d\times 1}$, diagonal $S\in\mR^{d\times d}$ such that $S=\mbox{diag}(\s)$, and $c\in(\min(\s),\max(\s))$, set
\begin{eqnarray}
   \label{eq:strthm1eq1}
 B(c) & = &  
 \max_{\x\in\mS^d,\|S\x\|_2=c} \x^TS^T G^{(2)}S\x.
\end{eqnarray}
Let $\xi(c)$ be as in (\ref{eq:inteq1ab0}).  One then has  
\begin{eqnarray}
   \label{eq:strthm1eq2}
 \lim_{n\rightarrow \infty } \mE \xi(c)  \geq c +   \lim_{n\rightarrow \infty } \frac{1}{\sqrt{2n}c} \mE B(c),
\end{eqnarray}
with the righthand side being the complementary random dual.
\end{theorem}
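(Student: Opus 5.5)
The plan is to prove (\ref{eq:strthm1eq2}) by a comparison of processes indexed by $\x$ alone. The $\y$-variable is removed by working with the quadratic form $\x^TS^TA^TAS\x$ instead of the bilinear form $\y^TAS\x$. The reason is that Theorem \ref{thm:Gordonpos1}, applied to the max--max structure of (\ref{eq:inteq1ab0}), only gives upper bounds. A lower bound therefore has to come from a mechanism in which the random primal and the dual process are both over $\x$ only, and whose covariance structures coincide to leading order rather than being merely ordered.

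\emph{Step 1 (covariance matching).} For $\x^{(a_1)},\x^{(a_2)}$ in $\{\x\in\mS^d,\|S\x\|_2=c\}$, consider the centered quadratic process
\begin{equation*}
Q(\x)=\x^TS^TA^TAS\x-nc^2=\sum_{i=1}^n\lp (A_{i,:}S\x)^2-c^2\rp .
\end{equation*}
Its covariance is $\mE Q(\x^{(a_1)})Q(\x^{(a_2)})=2n\lp(\x^{(a_1)})^TS^TS\x^{(a_2)}\rp^2$. The process $\cG_B(\x)=\x^TS^TG^{(2)}S\x$ has covariance $\lp(\x^{(a_1)})^TS^TS\x^{(a_2)}\rp^2$. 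Hence $Q$ and $\sqrt{2n}\,\cG_B$ have identical covariance kernels on the constraint set. This is the ``bilinear-quadratic'' correspondence: a Wishart-type quadratic form is matched with a Gaussian bilinear form in $S\x$.

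\emph{Step 2 (comparison of maxima).} Since $Q$ is a sum of $n$ i.i.d. non-Gaussian processes, the comparison goes through a Lindeberg/Gaussian interpolation. Replace the rows of $A$ one at a time, or interpolate $\sqrt{t}\,(\cdot)+\sqrt{1-t}\,(\cdot)$ on a smoothed maximum $\frac{1}{\beta}\log\sum e^{\beta(\cdot)}$ over an $\epsilon$-net. Two facts are then needed:
\begin{itemize}
\item the second-order terms cancel, because the covariances agree;
\item the third-order remainder is $o(1)$ after the normalization by $\sqrt{n}$.
\end{itemize}
The target conclusion of this step is $\lim\frac{1}{n}\mE\max_\x Q(\x)\geq\lim\frac{\sqrt{2n}}{n}\mE B(c)$.

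\emph{Step 3 (passing to $\xi(c)$).} Write $\xi(c)^2=c^2+\frac{1}{n}\max_\x Q(\x)$. Use the elementary inequality $\sqrt{c^2+2cb+b^2}=c+b$ with $b=\frac{1}{\sqrt{2n}c}B(c)$. To get the lower bound one needs $\frac{1}{n}\max Q\geq 2cb+b^2$, and $b=O(1)$ since $B(c)=O(\sqrt{d})$, so the $b^2$ term cannot be dropped. The fix I would try is to run the comparison of Step 2 on the process $\sqrt{c^2+Q(\x)/n}$ directly, rather than on $Q$. Concretely, I would add an independent scalar $c\,g$-type term in the spirit of (\ref{eq:mr1}) so that the diagonal variances match exactly. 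I would also exploit that the extra curvature of the Wishart fourth moments is what supplies the $b^2$ correction. Concentration (Remark \ref{rem:rem0}) then lets expectations and limits be interchanged.

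\emph{Main obstacle.} I expect Step 2--3 to be the crux. The fluctuation $Q/n$ is of the same order as $c^2$, so a naive Taylor expansion of the square root fails. The interpolation must therefore be organized so that the non-Gaussian higher cumulants of $(A_{i,:}S\x)^2$ contribute with a favorable sign (they do: they are nonnegative fourth-moment terms). I would first try to verify that sign in the derivative of the interpolated smoothed maximum before attempting to control the remainder.
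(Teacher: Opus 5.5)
Your proposal has a genuine gap, and it is at exactly the point you flag as the crux.

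\textbf{Steps 2--3 do not work.} The claim in Step 2 that the Lindeberg third-order remainder is $o(1)$ is false. Matching the covariance kernel of the non-Gaussian $Q$ with that of $\sqrt{2n}\,\x^TS^TG^{(2)}S\x$ does not make the maxima agree at this scale. Already for $S=I$ (so $c=1$), the Wishart edge gives $\frac{1}{n}\mE\max_{\x}Q\to\frac{2}{\sqrt{\alpha}}+\frac{1}{\alpha}$. The Gaussian surrogate instead gives $\frac{\sqrt{2n}}{n}\mE\max_{\x\in\mS^d}\x^TG^{(2)}\x\to\frac{2}{\sqrt{\alpha}}$. The order-one discrepancy is precisely the $b^2=\frac{1}{\alpha}$ you need in Step 3. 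So the whole content of the theorem sits in the non-Gaussian correction, and Step 3 only expresses the hope that the higher cumulants deliver exactly $b^2$ with the right sign. That sign claim is unsupported: third and fourth derivatives of a smoothed maximum are not sign-definite, so nonnegativity of the cumulants alone does not help. Running the comparison on $\sqrt{c^2+Q(\x)/n}=\frac{1}{\sqrt{n}}\|AS\x\|_2$ also does not help, since that is just $\frac{1}{\sqrt{n}}\max_{\y\in\mS^n}\y^TAS\x$, the bilinear form you discarded.

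\textbf{The missing idea.} Your opening premise is wrong. Theorem \ref{thm:Gordonpos1} gives a \emph{lower} bound on $\mE\max_{\x,\y}\y^TAS\x$ whenever the comparison process has equal variances and pointwise \emph{larger} off-diagonal covariances. Such a process exists if it is taken quadratic in both variables. The paper uses
$\frac{c}{\sqrt{2}}\y^TG^{(1)}\y+\frac{1}{\sqrt{2}c}\x^TS^TG^{(2)}S\x$.
Write $\rho_x=(\x^{(a_1)})^TS^TS\x^{(a_2)}$ and $\rho_y=(\y^{(a_1)})^T\y^{(a_2)}$. The covariance of this process is
$\frac{c^2}{2}\rho_y^2+\frac{1}{2c^2}\rho_x^2=\rho_x\rho_y+\frac{1}{2}(c\rho_y-\rho_x/c)^2$.
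This dominates the covariance $\rho_x\rho_y$ of $\y^TAS\x$ and coincides with it on the diagonal ($\rho_x=c^2$, $\rho_y=1$). Slepian then gives
$\mE\max\y^TAS\x\geq\frac{c}{\sqrt{2}}\mE\max_{\y\in\mS^n}\y^TG^{(1)}\y+\frac{1}{\sqrt{2}c}\mE B(c)$.
The spherical SK ground state, $\frac{1}{\sqrt{n}}\mE\max_{\y\in\mS^n}\y^TG^{(1)}\y\to\sqrt{2}$, produces the $c$ term after dividing by $\sqrt{n}$. This bounds $\xi(c)$ directly rather than $\xi(c)^2$, so the $b^2$ term comes for free upon squaring. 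Everything stays Gaussian and linear in $A$, and no non-Gaussian universality argument is needed.
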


\begin{proof} Let $G^{(1)}\in\mR^{n\times n}$ be comprised of standard normals independent among themselves and of all other random variables. We consider two centered Gaussian processes indexed by an array $\cX = \{\x,\y\}$
  \begin{eqnarray}
\label{eq:strmr1}
 \cG_1 (\cX) & \triangleq &  \cG (\x,\y)  \triangleq  \sum_{i=1}^n \sum_{j=1}^m A_{i,j}\s_i\x_i\y_j    \nonumber   \\
 \cG_l (\cX) & \triangleq &  \cG_l (\x,\y)  \triangleq    \frac{c}{\sqrt{2}}\y^T G^{(1)} \y +   \frac{1}{\sqrt{2} c }\x^T S^T G^{(2)} S\x .
  \end{eqnarray}
We take two arrays $\cX^{(a_1)}=\{ \x^{(a_1)},\y^{(a_1)}\}$ and $\cX^{(a_2)}=\{ \x^{(a_2)},\y^{(a_2)}\}$ with $\|\x^{(a_i)}\|_2=\|\y^{(a_i)}\|_2=1$ and $\|S\x^{(a_i)}\|_2=c$, $i=1,2$ and write
  \begin{eqnarray}
\label{eq:strmr2}
\mE \cG_1 (\cX^{(a_1)})\cG_1 (\cX^{(a_2)})   & =  &    \lp \x^{(a_1)} \rp^TS^TS\x^{(a_2)} \lp \y^{(a_1)} \rp^T\y^{(a_2)}   
\nonumber   \\
\mE \cG_u (\cX^{(a_1)})\cG_u (\cX^{(a_2)})  & =  &  \frac{c^2}{2} \lp \lp \y^{(a_1)} \rp^T\y^{(a_2)}\rp^2  +   \frac{1}{2c^2}  \lp \lp \x^{(a_1)} \rp^TS^TS\x^{(a_2)} \rp^2.
  \end{eqnarray}
Combining equations from (\ref{eq:strmr2}), we obtain
  \begin{eqnarray}
\label{eq:strmr5}
  \mE \cG_1 (\cX^{(a_1)})\cG_1 (\cX^{(a_2)})
 -
\mE \cG_u (\cX^{(a_1)})\cG_u (\cX^{(a_2)} )
 & = &    \lp \x^{(a_1)} \rp^TS^TS\x^{(a_2)} \lp \y^{(a_1)} \rp^T\y^{(a_2)}  
\nonumber
\\
& &  - \frac{c^2}{2} \lp \lp \y^{(a_1)} \rp^T\y^{(a_2)}\rp^2  -   \frac{1`}{2c^2} \lp \lp \x^{(a_1)} \rp^TS^TS\x^{(a_2)} \rp^2
\nonumber
\\
& = &
 -\lp   \frac{c}{\sqrt{2}}\lp \y^{(a_1)} \rp^T\y^{(a_2)}  -  \frac{1}{\sqrt{2}c} \lp \x^{(a_1)} \rp^TS^TS\x^{(a_2)}     \rp^2
  \leq   0. \nonumber \\
  \end{eqnarray}
We also observe  
  \begin{eqnarray}
\label{eq:strmr5a0}
  \mE \cG_1 (\cX^{(a_1)})\cG_1 (\cX^{(a_1)})
 -
\mE \cG_u (\cX^{(a_1)})\cG_u (\cX^{(a_1)} )
&  = & 
-\lp   \frac{c}{\sqrt{2}}\lp \y^{(a_1)} \rp^T\y^{(a_1)}  -  \frac{1}{\sqrt{2}c} \lp \x^{(a_1)} \rp^TS^TS\x^{(a_1)}     \rp^2
 \nonumber \\
&  = & 
-\lp   \frac{c}{\sqrt{2}}  -  \frac{1}{\sqrt{2}c} \|S\x^{(a_1)}\|_2^2     \rp^2
 \nonumber \\
&  = & 
-\lp   \frac{c}{\sqrt{2}}  -  \frac{c}{\sqrt{2}}     \rp^2
 \nonumber \\
&  = &   0,
  \end{eqnarray}
  where the second to last equality follows since $\|S\x^{(a_i)}\|_2=c$ and $\y^{(a_i)}\in\mS^m$ for $i=1,2$. Combining (\ref{eq:strmr1})--(\ref{eq:strmr5a0}) with correspondence  $Y\leftrightarrow\cG_u$ and $X\leftrightarrow\cG_1$ allows us to apply  Theorem \ref{thm:Gordonpos1}   to processes $\cG_1(\cdot)$ and  $\cG_u(\cdot)$ and obtain
\begin{align}\label{eq:strmt5a1a0}
&  &\mE \max_{\cX^{(a_1)}} \cG_1(\cX)  & \geq \mE \max_{\cX^{(a_1)}} \cG_u(\cX)
\nonumber \\
\Longleftrightarrow & & \mE \max_{\x\in\mS^d,\|S\x\|_2=c,\y\in\mS^n} \lp \sum_{i=1}^n \sum_{j=1}^m A_{i,j}\s_i\x_i\y_j  \rp & \geq \mE  \lp \frac{c}{\sqrt{2}} C(c)   +  \frac{1}{\sqrt{2} c} B(c) \rp
\nonumber \\
\Longleftrightarrow & & \mE \max_{\x\in\mS^d,\|S\x\|_2=c,\y\in\mS^n}  \y^TAS\x &  
\geq \mE   \lp \frac{c}{\sqrt{2}} C(c)   +  \frac{1}{\sqrt{2} c} B(c) \rp,
 \end{align}
where
\begin{eqnarray}
\label{eq:strmt5a1a0a0}
 C(c) = \max_{\y\in\mS^n} \y^TG^{(1)}\y, \quad\quad \mbox{and} \quad \quad  
  B(c) =\max_{\x\in\mS^d,\|S\x\|_2=c}  \x^TS^TG^{(2)}S\x.
 \end{eqnarray}
We then observe that $\max_{\y\in\mS^n}\y^TG^{(1)}\y$ corresponds to the ground state energy of the classical spherical SK model and is given by
\begin{eqnarray}
\label{eq:strmt5a1a0a1}
 \lim_{n\rightarrow \infty } \frac{1}{\sqrt{n} } \mE C(c) 
 =  \lim_{n\rightarrow \infty } \frac{1}{\sqrt{n} } \mE \max_{\y\in\mS^n} \y^TG^{(1)}\y =\sqrt{2} .
 \end{eqnarray}
Connecting  (\ref{eq:inteq1ab0}) and (\ref{eq:strmt5a1a0}), we also find
\begin{eqnarray}\label{eq:strmt5a1a1}
 \lim_{n\rightarrow \infty } \mE  \xi(c) =  \lim_{n\rightarrow \infty } \mE \frac{1}{\sqrt{n} }  \max_{\x\in\mS^d,\|S\x\|_2=c,\y\in\mS^n}  \y^TAS\x & \geq &  c +   \lim_{n\rightarrow \infty } \frac{1}{\sqrt{2n}c}  \mE B(c) .
  \end{eqnarray}
which, together with (\ref{eq:strthm1eq1}) and (\ref{eq:strmt5a1a0a0}), gives  (\ref{eq:strthm1eq2}) and completes the proof.
\end{proof}

\subsubsection{Handling $\mE B(c)$}
\label{sec:strhandbc}

We split handling $\mE B(c)$ into two parts. $\mE B(c)$'s upper-bound is discussed in the first part  and its matching lower bound in the second part.

\subsubsubsection{Upper-bounding $\mE B(c)$}
\label{sec:strhandbcub}

To upper-bound  $B(c)$ we closely follow procedures developed in previous sections. The following theorem summarizes main results.

\begin{theorem}
\label{thm:ubstrthm1}
Consider large $d\in\mN$ and let the elements of $G^{(2)}\in\mR^{d\times d}$, $\g^{(2)}\in\mR^{d\times 1}$, and $g\in\mR$ be independent standard normals ($G^{(2)}$, $\g^{(2)}$, and $g$ are independent of each other as well).   For $\s\in\mR_+^{d\times 1}$, diagonal $S\in\mR^{d\times d}$ such that $S=\mbox{diag}(\s)$, and $c\in(\min(\s),\max(\s))$, let $L(c)$ and $B(c)$ be as in (\ref{eq:thm1eq1}) and
 (\ref{eq:strthm1eq1}), respectively. One then has  
\begin{eqnarray}
   \label{eq:ubstrthm1eq2}
\frac{1}{\sqrt{2d}c} \mE B(c) \leq \frac{1}{\sqrt{d}} \mE L(c) .
\end{eqnarray}
\end{theorem}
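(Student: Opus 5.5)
We would prove (\ref{eq:ubstrthm1eq2}) with the same Slepian/Gordon comparison used in Theorem \ref{thm:thm1}, now applied to the quadratic process defining $B(c)$. Index by $\x$ alone, with $\x\in\mS^d$ and $\|S\x\|_2=c$. Define the two centered processes
\begin{eqnarray*}
\cG_b(\x) & \triangleq & \frac{1}{\sqrt{2}c}\,\x^TS^TG^{(2)}S\x + c g, \\
\cG_{bu}(\x) & \triangleq & \sqrt{2}\,\lp\g^{(2)}\rp^TS\x .
\end{eqnarray*}
Put $q\triangleq \lp\x^{(a_1)}\rp^TS^TS\x^{(a_2)}$ for two replicas. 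Since $G^{(2)}$ is a full (non-symmetrized) Gaussian matrix, $\mE \lp \x^{(a_1)T}S^TG^{(2)}S\x^{(a_1)}\rp\lp \x^{(a_2)T}S^TG^{(2)}S\x^{(a_2)}\rp = q^2$. The two covariances are therefore
\begin{eqnarray*}
\mE \cG_b(\x^{(a_1)})\cG_b(\x^{(a_2)}) & = & \frac{q^2}{2c^2}+c^2, \\
\mE \cG_{bu}(\x^{(a_1)})\cG_{bu}(\x^{(a_2)}) & = & 2q .
\end{eqnarray*}
Their difference is $\frac{1}{2c^2}(q-c^2)^2\geq 0$. On the diagonal $q=c^2$, and the difference is $0$, so the variances agree, which is condition 1 of Theorem \ref{thm:Gordonpos1}.

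The main obstacle is the sign. Theorem \ref{thm:Gordonpos1} upper-bounds $\mE\max$ of the process with the \emph{larger} off-diagonal correlations. Here that process is $\cG_b$, so the theorem gives $\mE\max\cG_b\geq\mE\max\cG_{bu}$, which is the wrong direction.

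The first step to fix this is to swap the roles of the linear and quadratic pieces, mirroring the $\cG/\cG_u$ pairing of Theorem \ref{thm:thm1}, where the $(1-\y^T\y)$-type factor produced a favourable sign. Concretely, I would try $\cG_b(\x)=\frac{1}{\sqrt{2}c}\x^TS^TG^{(2)}S\x$ compared against $\cG_{bu}(\x)=\sqrt{2}(\g^{(2)})^TS\x + \frac{1}{\sqrt 2 c}\cdot(\text{const})\,g$, adjusting constants to equalize variances. The difference $\frac{q^2}{2c^2}-2q+\text{const}$ is a downward-opening function of $-q$. It is minimized at $q=2c^2$, which is outside the admissible range $|q|\leq c^2$. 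On $[-\max\s^2,c^2]$ it is monotone in $q$, so it has a constant sign once the constant term is chosen to make it vanish at $q=c^2$. Checking that sign is the key computation. If it comes out $\leq 0$, Theorem \ref{thm:Gordonpos1} gives $\mE\max\cG_b\leq \mE\max\cG_{bu}=\sqrt2\,\mE L(c)$.

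If no choice of constants gives the right sign, I would fall back on a direct argument. Write $\x^TS^TG^{(2)}S\x=\x^TS^T\frac{G^{(2)}+G^{(2)T}}{2}S\x$ and bound it through the Lagrangian/strong-duality route used in (\ref{eq:hrd2})--(\ref{eq:hrd7}), that is, a spherical-SK-type ground state with the weighted constraint $\|S\x\|_2=c$. The resulting $\gamma_x$-variational formula would then be compared termwise with $\bar{L}$ from (\ref{eq:hrd9}).

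With either route, the final step is scaling. Divide by $\sqrt{d}$ and use concentration (Remark \ref{rem:rem0}) to drop the $cg$ term, since $\mE g=0$. This yields $\frac{1}{\sqrt{2d}c}\mE B(c)\leq\frac{1}{\sqrt d}\mE L(c)$, which is (\ref{eq:ubstrthm1eq2}).
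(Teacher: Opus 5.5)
Your first comparison is, up to scaling, exactly the paper's proof. The paper pairs $\x^TS^TG^{(2)}S\x+c^2g$ with $\sqrt{2}c\lp\g^{(2)}\rp^TS\x$, which have equal variances and covariance difference $(c^2-q)^2\geq 0$. However, two errors make you discard it, and the proposal ends without a valid argument.

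First, your constants are wrong. With $cg$ and $\sqrt{2}\lp\g^{(2)}\rp^TS\x$, the covariance difference is $\frac{q^2}{2c^2}+c^2-2q$, not $\frac{(q-c^2)^2}{2c^2}$. At $q=c^2$ it equals $-\frac{c^2}{2}\neq 0$, so the variance condition of Theorem \ref{thm:Gordonpos1} fails. The correct pair is $\frac{1}{\sqrt{2}c}\x^TS^TG^{(2)}S\x+\frac{c}{\sqrt{2}}g$ versus $\lp\g^{(2)}\rp^TS\x$. For this pair the difference really is $\frac{q^2}{2c^2}+\frac{c^2}{2}-q=\frac{(q-c^2)^2}{2c^2}\geq 0$, vanishing on the diagonal.

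Second, and decisively, you invert Slepian's lemma. Your own sentence is correct: the theorem upper-bounds $\mE\max$ of the \emph{more} correlated process. With the corrected pair, $\cG_b$ is that process, so it plays $Y$ and $\cG_{bu}$ plays $X$. The conclusion is therefore $\mE\max\cG_b\leq\mE\max\cG_{bu}$, i.e. $\frac{1}{\sqrt{2}c}\mE B(c)\leq\mE L(c)$, since the $g$ term has zero mean. Dividing by $\sqrt{d}$ gives (\ref{eq:ubstrthm1eq2}). There is no sign obstacle at all.

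The substitutes you propose do not work.
\begin{itemize}
\item Moving $g$ to the linear side cannot equalize variances. The quadratic piece $\frac{1}{\sqrt{2}c}\x^TS^TG^{(2)}S\x$ has variance $\frac{c^2}{2}$. This is strictly smaller than the variance $2c^2$ of $\sqrt{2}\lp\g^{(2)}\rp^TS\x$ (or $c^2$ after correct scaling). Your ``constant'' would therefore have to be a negative variance.
\item Your criterion ``if the difference comes out $\leq 0$, then $\mE\max\cG_b\leq\mE\max\cG_{bu}$'' again reverses the direction of Theorem \ref{thm:Gordonpos1}.
\item The Lagrangian fallback is not yet an argument. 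Unlike for $L(c)$, the optimization over $\x$ does not decouple coordinatewise. Dual feasibility becomes a matrix inequality $S\frac{G^{(2)}+G^{(2)T}}{2}S\preceq\gamma I+\gamma_0S^2$, whose asymptotics require a nontrivial random-matrix edge analysis. The ``termwise comparison with $\bar{L}$'' is also left unspecified.
\end{itemize}
The fix is simply to keep your first comparison, with the corrected constants and the correct reading of Slepian's lemma.
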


\begin{proof} We consider two centered Gaussian processes indexed by an array $\cX = \{\x\}$
  \begin{eqnarray}
\label{eq:ubstrmr1}
 \cG_B (\cX) & \triangleq &  \cG_B (\x)  \triangleq  \sum_{i=1}^n \sum_{j=1}^m G^{(2)}_{i,j}\s_i\x_i\s_j\x_j +c^2 g   \nonumber   \\
 \cG_{B_u} (\cX) & \triangleq &  \cG_{B_u} (\x)  \triangleq      \sqrt{2} c \lp \g^{(2)}\rp^T S\x .
  \end{eqnarray}
We take two arrays $\cX^{(a_1)}=\{ \x^{(a_1)}\}$ and $\cX^{(a_2)}=\{ \x^{(a_2)}\}$ with $\|\x^{(a_i)}\|_2 =1$ and $\|S\x^{(a_i)}\|_2=c$, $i=1,2$ and write
  \begin{eqnarray}
\label{eq:ubstrmr2}
\mE \cG_B (\cX^{(a_1)})\cG_B (\cX^{(a_2)})   & =  &   \lp \lp \x^{(a_1)} \rp^TS^TS\x^{(a_2)} \rp^2  +c^4
\nonumber   \\
\mE \cG_{B_u} (\cX^{(a_1)})\cG_{B_u} (\cX^{(a_2)})  & =  &    2c^2   \lp \x^{(a_1)} \rp^TS^TS\x^{(a_2)} .
  \end{eqnarray}
Subtracting second from the first equation gives
  \begin{eqnarray}
\label{eq:ubstrmr5}
  \mE \cG_B (\cX^{(a_1)})\cG_B (\cX^{(a_2)})
 -
\mE \cG_{B_u} (\cX^{(a_1)})\cG_{B_u} (\cX^{(a_2)} )
 & = &    \lp \lp \x^{(a_1)} \rp^TS^TS\x^{(a_2)} \rp^2  +c^4
  -  2c^2   \lp \x^{(a_1)} \rp^TS^TS\x^{(a_2)} 
\nonumber
\\
& = &
 \lp  c^2 -  \lp \x^{(a_1)} \rp^TS^TS\x^{(a_2)}  \rp^2
  \geq   0. \nonumber \\
  \end{eqnarray}
Additionally, we also have  
  \begin{eqnarray}
\label{eq:ubstrmr5a0}
  \mE \cG_B (\cX^{(a_1)})\cG_B (\cX^{(a_1)})
 -
\mE \cG_{B_u} (\cX^{(a_1)})\cG_{B_u} (\cX^{(a_1)} )
&  = & 
 \lp  c^2 -  \lp \x^{(a_1)} \rp^TS^TS\x^{(a_2)}  \rp^2
 \nonumber \\
&  = & 
\lp    c^2  -   \|S\x^{(a_1)}\|_2^2     \rp^2
 \nonumber \\
&  = & 
\lp  c^2 -  c^2     \rp^2
 \nonumber \\
&  = &   0,
  \end{eqnarray}
  where the second to last equality follows since $\|S\x^{(a_i)}\|_2=c$ for $i=1,2$. Taking (\ref{eq:ubstrmr1})--(\ref{eq:ubstrmr5a0}) together with $Y\leftrightarrow\cG_B$ and $X\leftrightarrow\cG_{B_u}$ correspondence and applying  Theorem \ref{thm:Gordonpos1}   to processes $\cG_B(\cdot)$ and  $\cG_{B_u}(\cdot)$ gives
\begin{align}\label{eq:ubstrmt5a1a0}
&  &\mE \max_{\cX^{(a_1)}} \cG_B(\cX)  & \leq \mE \max_{\cX^{(a_1)}} \cG_{B_u}(\cX)
\nonumber \\
\Longleftrightarrow & & \mE \max_{\x\in\mS^d,\|S\x\|_2=c} \lp \sum_{i=1}^d  G_{i,j}^{(2)}\s_i\x_i\s_j\x_j  +c^2 g \rp & \leq \sqrt{2}c   \mE \max_{\x\in\mS^d,\|S\x\|_2=c}  \lp\g^{(2)}\rp^T\x 
\nonumber \\
\Longleftrightarrow & & \mE \max_{\x\in\mS^d,\|S\x\|_2=c} \x^TS^TG^{(2)}S\x & \leq \sqrt{2}c   \mE \max_{\x\in\mS^d,\|S\x\|_2=c}  \lp\g^{(2)}\rp^T\x 
\nonumber \\
\Longleftrightarrow & & \mE B(c)  & \leq \sqrt{2}c   \mE L(c)
\nonumber \\
\Longleftrightarrow & & \frac{1}{\sqrt{2d}c}\mE B(c)  & \leq \frac{1}{\sqrt{d}}   \mE L(c),
 \end{align}
which matches (\ref{eq:ubstrthm1eq2}) and completes the theorem's proof. 
\end{proof}

\subsubsubsection{Lower-bounding $\mE B(c)$}
\label{sec:strhandbclb}

As stated earlier, Theorem \ref{thm:Gordonpos1} is a special case of  concepts discussed in Corollary 3 in \cite{Stojnicgscompyx16}  and in Corollary 4 in \cite{Stojnicgscomp16}). The machinery developed there ensures that the upper-bounding mechanism of the previous subsection is tight provided that two nontrivially overlapped ($q\neq1$) replicated systems cannot double the maximal value of a single system. The very same principle was utilized in the single-partite system in \cite{TalSph06,TalSK06} which we consider here (albeit in a more general spin configuration).

Below, we check whether this condition indeed holds. Following \cite{Stojnicgscompyx16,Stojnicgscomp16,TalSph06}, for a real scalar $t\in [0,1]$, we first consider interpolated system
 \begin{eqnarray}
 \label{eq:lbstr1}
\mbox{ \textbf{1-rep:} } \quad\quad D(c;t)=  \max_{\x\in\mS^d,\|S\x\|_2=c}  \lp \sqrt{t}\x^TS^TG^{(2)}S\x +\sqrt{1-t}\sqrt{2}c \lp\g^{(2)}\rp^TS\x \rp.  
   \end{eqnarray}
Clearly, $D(c;t)$ continuously interpolates between $B(c)$ and $\sqrt{2}L(c)$. In particular,  one has
 \begin{eqnarray}
 \label{eq:lbstr2}
 D(c,1) = B(c) \quad\quad \mbox{ and } \quad\quad  
   D(c;0)= \sqrt{2}cL(c).
  \end{eqnarray}
Moreover, Theorem \ref{thm:ubstrthm1} gives
 \begin{eqnarray}
 \label{eq:lbstr2a0}
\frac{1}{\sqrt{2d}c}\mE B(c) = \frac{1}{\sqrt{2d}c} \mE D(c,1)  \leq \frac{1}{\sqrt{2d}c} \mE D(c,t) \leq  
\frac{1}{\sqrt{2d}c}  \mE D(c;0)= \frac{1}{\sqrt{d}} \mE L(c).
  \end{eqnarray}
Then consider a set of replica pairs
 \begin{eqnarray}
 \label{eq:lbstr3}
\bar{\cX} = \left \{ (\x^{(1)},\x^{(2)}) \hspace{.05in} | \hspace{.05in}
\x^{(1)},\x^{(2)}\in\mS^d,  \|S\x^{(1)}  \|_2= \|S\x^{(2)}  \|_2=c,\lp\x^{(1)}\rp^TS^TS\x^{(2)}=qc^2 \right  \} ,
  \end{eqnarray}
and associate to it the following $q$-overlapped replicated system
 \begin{eqnarray}
 \label{eq:lbstr3}
\mbox{ \textbf{2-$q$-rep:} } \quad\quad D^{(2)}(c;t)=  \max_{(\x^{(1)},\x^{(2)})\in\bar{\cX}}   \sum_{i=1}^{2} \lp \lp \x^{(i)}\rp^TS^TG^{(2)}S\x^{(i)} +\sqrt{1-t}\sqrt{2}c \lp\g^{(2)}\rp^TS\x^{(i)} \rp .  
   \end{eqnarray}
The above principle -- \emph{nontrivially overlapped replicated system cannot double the free energy} -- means that for any $t\in[0,1)$ and $q\in(-1,1)$
 \begin{eqnarray}
 \label{eq:lbstr4}
  \mbox{ \textbf{2-$q$-rep} } < \mbox{ $\mathbf{2}\times$(\textbf{1-rep}) } .  
   \end{eqnarray}
In mathematical terminology one then has for any $t\in[0,1)$
 \begin{eqnarray}
 \label{eq:lbstr5}
  \mbox{ \textbf{2-$q$-rep} } < \mbox{ $\mathbf{2}\times$(\textbf{1-rep}) } 
  \quad\quad
\Longleftrightarrow
  \quad\quad
  \min_{q\in(-1,1)} \lp 
  \lim_{d\rightarrow \infty} \frac{2}{\sqrt{d}} \mE D(c;0) 
  -
   \lim_{d\rightarrow \infty} \frac{1}{\sqrt{d}} \mE D^{(2)}(c;t) 
\rp >0.  
   \end{eqnarray}
If this principle is indeed in place, i.e., if one can show that (\ref{eq:lbstr5}) holds then complete analogues to Theorem 2.4 in \cite{TalSK06} and Theorem 5.2 in \cite{TalSph06} are established and  the remaining portions of the machineries of \cite{TalSph06,TalSK06} ensure $\mE D(c,1)=\mE D(c,0)$. In other words,
 \begin{equation}
 \label{eq:lbstr6}
 \min_{q\in(-1,1)} \lp 
  \lim_{d\rightarrow \infty} \frac{2}{\sqrt{2d}c} \mE D(c;0) 
  -
   \lim_{d\rightarrow \infty} \frac{1}{\sqrt{2d}c} \mE D^{(2)}(c;t) 
\rp >0 
\hspace{.04in}\Longleftrightarrow \hspace{.04in}
\lim_{d\rightarrow \infty} \frac{1}{\sqrt{2d}c} \mE D(c,1)= \lim_{d\rightarrow \infty} \frac{1}{\sqrt{2d}c} \mE D(c,0).  
   \end{equation}

To establish $  \mbox{ \textbf{2-$q$-rep} } < \mbox{ $\mathbf{2}\times$(\textbf{1-rep}) } $, we first prove the following theorem that upper-bounds $\mE D^{(2)}(c;t)$.

\begin{theorem}
\label{thm:lbstrthm1}
Consider large $d\in\mN$ and let the elements of $G^{(2)}\in\mR^{d\times d}$, $\g^{(1,1)},\g^{(1,2)},\g^{(2)}\in\mR^{d\times 1}$, and $g\in\mR$ be independent standard normals ($G^{(2)}$, $\g^{(1,1)}$, $\g^{(1,2)}$, $\g^{(2)}$, and $g$ are all independent among themselves as well).  Let $t\in(0,1)$ and $q\in(-1,1)$. For nonnegative $\s\in\mR^{d\times 1}$, diagonal $S\in\mR^{d\times d}$ such that $S=\mbox{diag}(\s)$, and $c\in(\min(\s),\max(\s))$, let $D^{(2)}(c;t)$   be as in (\ref{eq:lbstr3}). Set $\g^{(1,3)} = q\g^{(1,1)} + \sqrt{1-q^2}\g^{(1,2)}$,  
\begin{equation}
   \label{eq:lbstrthm1eq1}
\cG_{D_u} (\x^{(1)},\x^{(2)}) =    
\sqrt{2} c\lp \sqrt{t} \lp\g^{(1,1)}\rp^TS\x^{(1)}  
+\sqrt{t} \lp\g^{(1,3)}\rp^TS\x^{(2)}  
+\sqrt{1-t} \lp\g^{(2)}\rp^TS \lp\x^{(1)}+ \x^{(2)} \rp\rp,
\end{equation} 
and
\begin{equation}
   \label{eq:lbstrthm1eq1a0}
L^{(2)}(c;t)=  \max_{(\x^{(1)},\x^{(2)})\in\bar{\cX}}  
\frac{1}{\sqrt{2}c}\cG_{D_u} (\x^{(1)},\x^{(2)}) .
\end{equation} 
 One then has  
\begin{eqnarray}
   \label{eq:lbstrthm1eq2}
\frac{1}{\sqrt{2d}c} \mE D^{(2)}(c;t) \leq \frac{1}{\sqrt{d}} \mE L^{(2)}(c;t) .
\end{eqnarray}
\end{theorem}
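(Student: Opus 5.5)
The plan is to mirror the proof of Theorem \ref{thm:ubstrthm1}, now for two replicas with a fixed overlap. I would compare two centered Gaussian processes indexed by $(\x^{(1)},\x^{(2)})\in\bar{\cX}$ and apply the Slepian part of Theorem \ref{thm:Gordonpos1}.

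I read the quadratic part of $D^{(2)}(c;t)$ as carrying the factor $\sqrt{t}$, as in the one-replica system $D(c;t)$. This matches the interpolation and makes $\mbox{\textbf{2-$q$-rep}}$ reduce to twice $\mbox{\textbf{1-rep}}$ when $q=1$. The first process is then
\begin{equation*}
\cG_D(\x^{(1)},\x^{(2)}) = \sum_{i=1}^{2}\lp \sqrt{t}\lp\x^{(i)}\rp^TS^TG^{(2)}S\x^{(i)} + \sqrt{1-t}\sqrt{2}c\lp\g^{(2)}\rp^TS\x^{(i)}\rp + \sqrt{t}\,c^2\sqrt{2+2q^2}\, g .
\end{equation*}
The added scalar term does not change the expected maximum. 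The second process is $\cG_{D_u}$ from (\ref{eq:lbstrthm1eq1}).

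For two pairs $(\x^{(1)},\x^{(2)})$ and $(\bar{\x}^{(1)},\bar{\x}^{(2)})$ in $\bar{\cX}$, I would set
\begin{equation*}
a_{ij}=\frac{1}{c^2}\lp\x^{(i)}\rp^TS^TS\bar{\x}^{(j)} .
\end{equation*}
The key steps for the covariances are as follows.
\begin{enumerate}
\item The $\g^{(2)}$ parts of $\cG_D$ and $\cG_{D_u}$ are identical, so both contribute $2(1-t)c^4\sum_{i,j}a_{ij}$ and cancel in the difference.
\item The quadratic part of $\cG_D$ gives $t c^4\sum_{i,j}a_{ij}^2$, as in (\ref{eq:ubstrmr2}).
\item The scalar term of $\cG_D$ gives $t c^4(2+2q^2)$.
\item Since $\mE\,\g^{(1,1)}\lp\g^{(1,3)}\rp^T=qI$ and $\mE\,\g^{(1,3)}\lp\g^{(1,3)}\rp^T=I$, the $\g^{(1,\cdot)}$ parts of $\cG_{D_u}$ give $2tc^4\lp a_{11}+a_{22}+q(a_{12}+a_{21})\rp$.
\end{enumerate}
Subtracting, I expect
\begin{equation*}
\mE\cG_D\cG_D-\mE\cG_{D_u}\cG_{D_u} = tc^4\lp (a_{11}-1)^2+(a_{22}-1)^2+(a_{12}-q)^2+(a_{21}-q)^2\rp \geq 0 .
\end{equation*}
This identity is the place where the choice of the scalar constant $\sqrt{2+2q^2}$ must be verified; it is the only nonroutine computation.

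On the diagonal the two pairs coincide. The constraints defining $\bar{\cX}$ then force $a_{11}=a_{22}=1$ and $a_{12}=a_{21}=q$, so the variances match exactly, as in (\ref{eq:ubstrmr5a0}). This is precisely why the overlap is fixed to $q$ inside $\bar{\cX}$.

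With the correspondence $Y\leftrightarrow\cG_D$ and $X\leftrightarrow\cG_{D_u}$, Theorem \ref{thm:Gordonpos1} yields $\mE\max_{\bar{\cX}}\cG_D\leq \mE\max_{\bar{\cX}}\cG_{D_u}$. The left side equals $\mE D^{(2)}(c;t)$ and the right side equals $\sqrt{2}c\,\mE L^{(2)}(c;t)$. Dividing by $\sqrt{2d}\,c$ gives (\ref{eq:lbstrthm1eq2}).

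The main obstacle is justifying Slepian's lemma on the continuous index set $\bar{\cX}$. I would handle it as in the earlier theorems: take finite nets of the compact set $\bar{\cX}$ and pass to the limit using continuity of both processes.
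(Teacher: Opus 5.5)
Your proposal is correct and follows the paper's proof: the same comparison process $\cG_D$ (with $\sqrt{t}$ on the quadratic terms and the scalar term $\sqrt{t}\sqrt{2c^4(1+q^2)}\,g$), the same sum-of-four-squares covariance difference, the same diagonal equality forced by the constraints defining $\bar{\cX}$, and Slepian's lemma with $Y\leftrightarrow\cG_D$, $X\leftrightarrow\cG_{D_u}$. Your decision to read $D^{(2)}(c;t)$ with the factor $\sqrt{t}$ on the quadratic part is also what the paper's $\cG_D$ uses, even though the displayed definition of $D^{(2)}$ omits that factor.
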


\begin{proof} We consider two centered Gaussian processes indexed by an array $\cX = \{\x^{(1)},\x^{(2)}\}$
  \begin{eqnarray}
\label{eq:lbstrmr1}
 \cG_D (\cX)  \triangleq   \cG_D (\x^{(1)},\x^{(2)})  
 & \triangleq  & \sqrt{t}\sum_{i=1}^n \sum_{j=1}^m G^{(2)}_{i,j}\s_i\x_i^{(1)}\s_j\x_j^{(1)}+  \sqrt{t} \sum_{i=1}^n \sum_{j=1}^m G^{(2)}_{i,j}\s_i\x_i^{(2)}\s_j\x_j^{(2)} 
 \nonumber \\
 & & 
  +\sqrt{1-t} \sqrt{2}c\lp\g^{(2)}\rp^T\x^{(1)}+\sqrt{1-t}  \sqrt{2}c \lp\g^{(2)}\rp^T\x^{(2)} 
 + \sqrt{t}  \sqrt{2c^4(1+q^2)}  g  \nonumber   \\
 \cG_{D_u} (\cX) & \triangleq &  \cG_{D_u} (\x^{(1)},\x^{(2)})   .
  \end{eqnarray}
We take two arrays $\cX^{(a)}=\{ \x^{(a_1)}, \x^{(a_2)}\}$ and $\cX^{(b)}=\{ \x^{(b_1)}, \x^{(b_2)}\}$ with $\|\x^{(a_i)}\|_2 =\|\x^{(b_i)}\|_2 =1$,  $\|S\x^{(a_i)}\|_2=\|S\x^{(b_i)}\|_2=c$, $i=1,2$, and $ \lp\x^{(a_1)}\rp^TS^TS\x^{(a_2)}=\lp\x^{(b_1)}\rp^TS^TS\x^{(b_2)}=qc^2$ . Then we have 
  \begin{eqnarray}
\label{eq:lbstrmr2}
\mE \cG_D (\cX^{(a)})\cG_D (\cX^{(b)})   & =  &   
t \lp \lp \x^{(a_1)} \rp^TS^TS\x^{(b_1)} \rp^2 
+
t\lp \lp \x^{(a_1)} \rp^TS^TS\x^{(b_2)} \rp^2
+
t\lp \lp \x^{(a_2)} \rp^TS^TS\x^{(b_1)} \rp^2
\nonumber \\
& & 
 +
t\lp \lp \x^{(a_2)} \rp^TS^TS\x^{(b_2)} \rp^2
+
2c(1-t) \lp  \x^{(a_1)}  +  \x^{(a_2)} \rp^T\lp \x^{(b_1)} + \x^{(b_2)} \rp 
 \nonumber \\
& &  + (2c^4+2c^4q^2) t, 
  \end{eqnarray}
and
  \begin{eqnarray}
\label{eq:lbstrmr2a0}
 \mE \cG_{D_u} (\cX^{(a)})\cG_{D_u} (\cX^{(b)})  & =  &  
   2c^2t   \lp \x^{(a_1)} \rp^TS^TS\x^{(b_1)}
   +
   2c^2tq   \lp \x^{(a_1)} \rp^TS^TS\x^{(b_2)}
   +
  2c^2tq   \lp \x^{(a_2)} \rp^TS^TS\x^{(b_1)}
\nonumber \\
& &    +
      2c^2t   \lp \x^{(a_2)} \rp^TS^TS\x^{(b_2)} 
      +
2c(1-t) \lp  \x^{(a_1)}  +  \x^{(a_2)} \rp^T\lp \x^{(b_1)} + \x^{(b_2)} \rp .
  \end{eqnarray}
Subtracting (\ref{eq:lbstrmr2a0}) from (\ref{eq:lbstrmr2}) gives
  \begin{eqnarray}
\label{eq:lbstrmr5}
 & & \hspace{-.4in} \mE \cG_D (\cX^{(a)})\cG_D (\cX^{(b)})
  - 
 \mE \cG_{D_u} (\cX^{(a)})\cG_{D_u} (\cX^{(b)} )
  =      
\nonumber \\
& = & t \lp  c^2 -  \lp \x^{(a_1)} \rp^TS^TS\x^{(b1)}  \rp^2
+
t \lp  qc^2 -  \lp \x^{(a_1)} \rp^TS^TS\x^{(b2)}  \rp^2
\nonumber \\
& & 
+ t \lp  qc^2 -  \lp \x^{(a_2)} \rp^TS^TS\x^{(b1)}  \rp^2
+t \lp  c^2 -  \lp \x^{(a_2)} \rp^TS^TS\x^{(b2)}  \rp^2
  \geq   0.  
  \end{eqnarray}
We also have  
  \begin{eqnarray}
\label{eq:lbstrmr5a0}
 & & \hspace{-.4in} \mE \cG_D (\cX^{(a)})\cG_D (\cX^{(a)})
  - 
 \mE \cG_{D_u} (\cX^{(a)})\cG_{D_u} (\cX^{(a)} )
  =      
\nonumber \\
& = & t \lp  c^2 -  \lp \x^{(a_1)} \rp^TS^TS\x^{(a1)}  \rp^2
+
t \lp  qc^2 -  \lp \x^{(a_1)} \rp^TS^TS\x^{(a2)}  \rp^2
\nonumber \\
& & 
+ t \lp  qc^2 -  \lp \x^{(a_2)} \rp^TS^TS\x^{(a1)}  \rp^2
+t \lp  c^2 -  \lp \x^{(a_2)} \rp^TS^TS\x^{(a2)}  \rp^2
  \nonumber \\
&  = & 
\lp    c^2  -   \|S\x^{(a_1)}\|_2^2     \rp^2
+
t \lp  qc^2 -  \lp \x^{(a_1)} \rp^TS^TS\x^{(a2)}  \rp^2
\nonumber \\
& &  + t \lp  qc^2 -  \lp \x^{(a_2)} \rp^TS^TS\x^{(a1)}  \rp^2
+\lp    c^2  -   \|S\x^{(a_2)}\|_2^2     \rp^2
  \nonumber \\
&  = &   0,
  \end{eqnarray}
  where the last equality follows since $\|S\x^{(a_i)}\|_2=c$ for $i=1,2$ and $ \lp\x^{(a_1)}\rp^TS^TS\x^{(a_2)}=\lp\x^{(b_1)}\rp^TS^TS\x^{(b_2)}=qc^2$. Relying on (\ref{eq:lbstrmr1})--(\ref{eq:lbstrmr5a0}) and $Y\leftrightarrow\cG_D$ and $X\leftrightarrow\cG_{D_u}$ correspondence, we apply  Theorem \ref{thm:Gordonpos1}   to processes $\cG_D(\cdot)$ and  $\cG_{D_u}(\cdot)$ and obtain
\begin{align}\label{eq:lbstrmt5a1a0}
&  &\mE \max_{\cX^{(a_1,a_2)}} \cG_D(\cX)  & \leq \mE \max_{\cX^{(a_1,a_2)}} \cG_{D_u}(\cX)
\nonumber \\
\Longleftrightarrow & & \mE \max_{(\x^{(1)},\x^{(2)})\in\bar{\cX}} \cG_D (\x^{(1)},\x^{(2)}) & \leq \mE \max_{(\x^{(1)},\x^{(2)})\in\bar{\cX}} \cG_{D_u} (\x^{(1)},\x^{(2)}) 
 \nonumber \\
\Longleftrightarrow & & \mE D^{(2)}(c;t)  & \leq \sqrt{2}c   \mE L^{(2)}(c;t)
\nonumber \\
\Longleftrightarrow & & \frac{1}{\sqrt{2d}c}\mE D^{(2)}(c;t)  & \leq \frac{1}{\sqrt{d}}   \mE L^{(2)}(c;t),
 \end{align}
which matches (\ref{eq:lbstrthm1eq2}) and completes the theorem's proof. 
\end{proof}

Keeping in mind (\ref{eq:lbstr2}), (\ref{eq:lbstr2a0}), and (\ref{eq:lbstrthm1eq2}), condition
in (\ref{eq:lbstr5}) and  (\ref{eq:lbstr6}) will be met if for any $t<1$
\begin{eqnarray}
   \label{eq:lbstrmt5a1a1}
\lim_{d\rightarrow \infty} \frac{1}{\sqrt{d}} \mE L^{(2)}(c;t) < 2 \lim_{d\rightarrow \infty}\frac{1}{\sqrt{d}} \mE L(c).
\end{eqnarray}
We provide a characterization of $\lim_{d\rightarrow \infty} \frac{1}{\sqrt{d}} \mE L^{(2)}(c;t) $ in the following subsection.

\subsubsubsection{Characterization of $\lim_{d\rightarrow \infty} \frac{1}{\sqrt{d}} \mE L^{(2)}(c;t) $}
\label{sec:l2}

 Clearly, $L(c)$ is the key object of interest. Below we study it in detail. First, from 
 (\ref{eq:lbstrthm1eq1}) and  (\ref{eq:lbstrthm1eq1a0})
\begin{equation}
   \label{eq:l2lbstrthm1eq1}
L^{(2)}(c;t)=  \max_{(\x^{(1)},\x^{(2)})\in\bar{\cX}}  
 \lp \sqrt{t} \lp\g^{(1,1)}\rp^TS\x^{(1)}  
+\sqrt{t} \lp\g^{(1,3)}\rp^TS\x^{(2)}  
+\sqrt{1-t} \lp\g^{(2)}\rp^TS \lp\x^{(1)}+ \x^{(2)} \rp\rp.
\end{equation} 
 Let $\g^{(x,1)}\in\mR^{d\times 1}$ have independent standard normal components. Also, let $\g^{(x,2)}\in\mR^{d\times 1}$ have independent standard normal components. Additionally, let
\begin{equation}
   \label{eq:l2lbstrthm1eq1a0}
\mE \g_i^{(x,1)}\g_i^{(x,2)} = 1-t + qt \triangleq a, 1\leq i\leq d. 
 \end{equation} 
One can then replace  (\ref{eq:lbstrthm1eq1})  with its statistical equivalent 
\begin{equation}
   \label{eq:l2lbstrthm1eq1a1}
L^{(2)}(c;t)=  \max_{(\x^{(1)},\x^{(2)})\in\bar{\cX}}  
 \lp \lp\g^{(x,1)}\rp^TS\x^{(1)}  
 +
 \lp\g^{(x,2)}\rp^TS\x^{(2)} 
 \rp.
\end{equation} 
Basic algebraic transformations allow to rewrite (\ref{eq:l2lbstrthm1eq1a1}) as
\begin{eqnarray}
   \label{eq:l2hrd1}
L^{(2)}(c;t)= - \min_{(\x^{(1)},\x^{(2)})\in\bar{\cX}}  
 \lp \lp\g^{(x,1)}\rp^TS\x^{(1)}  
 +
 \lp\g^{(x,2)}\rp^TS\x^{(2)} 
 \rp.
\end{eqnarray}
We then have for the Lagrangian 
\begin{eqnarray}
   \label{eq:l2hrd2}
 \cL & = & \sum_{i=1}^{d} \lp \g_i^{(x,1)}\s_i\x_i^{(1)}  
 +
\g_i^{(x,2)}\s_i\x_i^{(2)} \rp 
  + \gamma_1 \sum_{i=1}^{d}\lp\x_i^{(1)}\rp^2 -\gamma_1
 +   \gamma_2 \sum_{i=1}^{d}\lp\x_i^{(2)}\rp^2 -\gamma_2
 \nonumber \\
 & &  
   + \gamma_{01} \sum_{i=1}^{d}\s_i^2\lp\x_i^{(2)}\rp^2 -\gamma_{01} c^2
      + \gamma_{02} \sum_{i=1}^{d}\s_i^2 \lp \x_i^{(2)}\rp^2 -\gamma_{02} c^2
      + \nu \sum_{i=1}^{d}\x_i^{(1)}\s_i^2\x_i^{(2)} -\nu q c^2.
\end{eqnarray}
A combination of  (\ref{eq:l2hrd1}) and  (\ref{eq:l2hrd2}) together with duality gives
\begin{eqnarray}
   \label{eq:l2hrd3}
 L^{(2)}(c;t)  = -\min_{\x^{(1)},\x^{(2)}}\max_{\gamma_1,\gamma_2,\gamma_{01},\gamma_{02},\nu} \cL
 \leq  -\max_{\gamma_1,\gamma_2,\gamma_{01},\gamma_{02},\nu} \min_{\x^{(1)},\x^{(2)}} \cL.
\end{eqnarray}
To optimize over $\x^{(1)}$ and $\x^{(2)}$, we first find derivatives
\begin{eqnarray}
   \label{eq:l2hrd4}
 \frac{d\cL}{d\x_i^{(1)}} & =  & \g_i^{(x,1)} \s_i + 2\gamma_1 \x_i^{(1)} + 2\gamma_{01} \s_i^2\x_i^{(1)}
 +\nu\s_i^2\x_i^{(2)}
 \nonumber \\
  \frac{d\cL}{d\x_i^{(2)}} & =  & \g_i^{(x,2)} \s_i + 2\gamma_2 \x_i^{(2)} + 2\gamma_{02} \s_i^2\x_i^{(2)}
 +\nu\s_i^2\x_i^{(1)}.
\end{eqnarray}
As $\gamma$'s are scalars and problem is completely symmetric in $\x^{(1)}$ and  $\x^{(2)}$, we have $\gamma_1=\gamma_2=\gamma$ and $\gamma_{01}=\gamma_{02}=\gamma_0$. Also, we set
\begin{eqnarray}
   \label{eq:l2hrd4a0}
\z_{1,i} & =  & \g_i^{(x,1)} \s_i  
 \nonumber \\
\z_{2.i} & =  & \g_i^{(x,2)} \s_i  
 \nonumber \\
 \v_i & = &  2(\gamma + \gamma_0 \s_i^2)
 \nonumber \\
 \nu_i & = &  \nu \s_i^2.
\end{eqnarray}
Keeping (\ref{eq:l2hrd4a0}) in mind and equalling  derivatives in (\ref{eq:l2hrd4}) to zero gives 
\begin{eqnarray}
   \label{eq:l2hrd4a1}
\z_{1,i} + \nu_i \x_i^{(2)} + \x_i^{(1)} \v_i & = & 0 \nonumber \\
\z_{2,i} + \nu_i \x_i^{(1)} + \x_i^{(2)} \v_i & = & 0 .
\end{eqnarray}
Summing and subtracting the above two equations gives
\begin{eqnarray}
   \label{eq:l2hrd4a2}
\x_i^{(1)}+\x_i^{(2)} & = &  -\frac{\z_{1,i}+\z_{2,i}}{\nu_i+\v_i}  
\nonumber \\
 \x_i^{(1)}-\x_i^{(2)} & = &   -\frac{-\z_{1,i}+\z_{2,i}}{\nu_i-\v_i},
\end{eqnarray}
and
\begin{eqnarray}
   \label{eq:l2hrd4a3}
\x_i^{(1)}  & = &  \frac{ \z_{1,i} \v_i - \z_{2,i}\nu_i}{\nu_i^2-\v_i^2} \nonumber \\
\x_i^{(2)} & = & \frac{ -\z_{1,i} \v_i + \z_{2,i}\nu_i}{\nu_i^2-\v_i^2} .
\end{eqnarray}
We can then rewrite (\ref{eq:l2hrd2}) as 
\begin{equation}
   \label{eq:l2hrd4a4}
 \cL  =  \sum_{i=1}^{d} \lp \z_{1,i} \x_i^{(1)}  
 +
\z_{2,i}\x_i^{(2)} \rp 
  + \frac{1}{2} \sum_{i=1}^{d}\lp\x_i^{(1)}\rp^2\v_i 
 +  \frac{1}{2}  \sum_{i=1}^{d}\lp\x_i^{(2)}\rp^2\v_i
   + \sum_{i=1}^{d}\nu_1\x_i^{(1)}\x_i^{(2)}
    -2\gamma
 -2\gamma_{0} c^2
        -\nu q c^2.
\end{equation}
Set
\begin{eqnarray}
   \label{eq:l2hrd4a5}
 K_{1,i} & = & 2\nu_i(-\z_{1,i}\nu_i + \z_{2,i}\v_i)(\z_{1,i}\v_i - \z_{2,i}\nu_i)
= 2 \nu_i(-z_{1,i}^2\nu_i\v_i + \z_{2,i}\z_{1,i}\v_i^2 + \z_{1,i}\z_{2,i}\nu_i^2-\z_{2,i}^2\nu_i\v_i)
\nonumber \\
 K_{2,i} & = & \v_i(-\z_{1,i}\nu_i + \z_{2,i}\v_i)^2 = \v_i(\z_{1,i}^2\nu_i^2 -2\z_{1,i}\z_{2,i}\nu_i\v_i + \z_{2,i}^2\v_i^2) 
\nonumber \\
K_{3,i} & = & \v_i(\z_{1,i}\v_i - \z_{2,i}\nu_i)^2 = \v_i(\z_{1,i}^2\v_i^2 -2\z_{1,i}\z_{2,i}\nu_i\v_i + \z_{2,i}^2\nu_i^2).
  \end{eqnarray}
Then
\begin{equation}
   \label{eq:l2hrd4a6}
 \cL  =  \sum_{i=1}^{d} \lp \z_{1,i} \x_i^{(1)}  
 +
\z_{2,i}\x_i^{(2)} \rp 
  + \frac{1}{2} \sum_{i=1}^{d} \frac{K_{1,i} + K_{2,i} + K_{3,i}  }{(\nu_i^2-\v_i^2)^2}    -2\gamma
 -2\gamma_{0} c^2
        -\nu q c^2.
\end{equation}
From (\ref{eq:l2hrd4a5}) we find
\begin{align}
   \label{eq:l2hrd4a7}
K_{1,i}+K_{2,i}+K_{3,i}
& = 
 ( -\z_{1,i}^2\nu_i^2\v_i + \v_i\z_{1,i}^2\v_i^2 ) +2(\nu_i^3\z_{1,i}\z_{2,i} -\z_{1,i}\z_{2,i}\nu_i\v_i^2 ) +(- 2\z_{2,i}^2\nu_i^2\v_i + \v_i\z_{2,i}^2\nu_i^2 +\v_i^3\z_{2,i}^2) 
\nonumber \\
& =  
\v_i\z_{1,i}^2( -\nu_i^2 + \v_i^2 ) +2\z_{1,i}\z_{2,i}\nu_i( \nu_i^2 - \v_i^2 ) + \z_{2,i}^2\v_i(-\nu_i^2 + \v_i^2)
\nonumber \\
& =
( -\v\z_{1,i}^2 + 2\z_{1,i}\z_{2,i}\nu_i -\z_{2,i}^2\v_i )(\nu_i^2-\v_i^2). 
\end{align}
Plugging this back in (\ref{eq:l2hrd4a6}) gives
\begin{equation}
   \label{eq:l2hrd4a8}
 \cL  =  \sum_{i=1}^{d} \lp \z_{1,i} \x_i^{(1)}  
 +
\z_{2,i}\x_i^{(2)} \rp 
  + \frac{1}{2} \sum_{i=1}^{d} \frac{  -\v_i\z_{1,i}^2 + 2\z_{1,i}\z_{2,i}\nu_i -\z_{2,i}^2\v_i }{\nu_i^2-\v_i^2}    -2\gamma
 -2\gamma_{0} c^2
        -\nu q c^2.
\end{equation}
We also observe
\begin{eqnarray}
   \label{eq:l2hrd4a9}
\z_{1,i}\x_i^{(1)} + \z_{2,i}\x_i^{(2)} & = & \z_{1,i}\frac{ \z_{1,i}\v_i - \z_{2,i}\nu_i}{\nu_i^2-\v_i^2} + \z_{2,i}\frac{-\z_{1,i}\nu_i + \z_{2,i}\v_i}{\nu_i^2-\v_i^2}
\nonumber \\
& = & \frac{ \z_{1,i}^2\v_i - 2\z_{1,i}\z_{2,i}\nu_i + \z_{2,i}^2\v_i}{\nu_i^2-\v_i^2} .
\end{eqnarray}
A combination of  (\ref{eq:l2hrd4a8}) and (\ref{eq:l2hrd4a9}) gives
\begin{equation}
   \label{eq:l2hrd4a10}
\min_{\x^{(1)},\x^{(2)}}  \cL  =  - \frac{1}{2} \sum_{i=1}^{d} \frac{  -\v_i\z_{1,i}^2 + 2\z_{1,i}\z_{2,i}\nu_i -\z_{2,i}^2\v_i }{\nu_i^2-\v_i^2}    -2\gamma
 -2\gamma_{0} c^2
        -\nu q c^2.
\end{equation}
After a change of variables 
\begin{eqnarray}
   \label{eq:l2hrd4a11}
 \nu & = &\nu_x\gamma_0
 \nonumber \\
 \nu_i & = &\nu_{x,i}\gamma_0
 \nonumber \\
 \nu_{x,i} & = &\nu_x\s_i^2
 \nonumber \\
 \gamma &=& \gamma_x\gamma_0
 \nonumber \\
 \v_{x,i} &=& 2(\gamma_x+\s_i^2)
 \nonumber \\
 \v_i &= & 2(\gamma +\gamma_0\s_i^2)= 2\gamma_0(\gamma_x+\s_i^2) = \v_{x,i}\gamma_0,
 \end{eqnarray}
(\ref{eq:l2hrd4a10}) can be rewritten as
\begin{equation}
   \label{eq:l2hrd4a12}
\min_{\x^{(1)},\x^{(2)}}  \cL  =  - \frac{1}{4\gamma_0} \sum_{i=1}^{d} 2\frac{  -\v_{x,i}\z_{1,i}^2 + 2\z_{1,i}\z_{2,i}\nu_{x,i} -\z_{2,i}^2\v_{x,i} }{\nu_{x,i}^2-\v_{x,i}^2}    - \gamma_0 ( 2\gamma_x
 +2 c^2
        +\nu_x q c^2).
\end{equation}
Maximization ove $\gamma_0$ gives
\begin{equation}
   \label{eq:l2hrd4a13}
\max_{\gamma_0}\min_{\x^{(1)},\x^{(2)}}  \cL  =  - \sqrt{ \lp \sum_{i=1}^{d} 2\frac{  -\v_{x,i}\z_{1,i}^2 + 2\z_{1,i}\z_{2,i}\nu_{x,i} -\z_{2,i}^2\v_{x,i} }{\nu_{x,i}^2-\v_{x,i}^2} \rp ( 2\gamma_x
 +2 c^2
        +\nu_x q c^2) }.
\end{equation}
Combining (\ref{eq:l2hrd3}), (\ref{eq:l2hrd4a11}), and (\ref{eq:l2hrd4a13}) and relying on the law of large numbers and concentrations, we obtain 
\begin{eqnarray}
   \label{eq:l2hrd14}
\lim_{d\rightarrow \infty }\frac{1}{\sqrt{d}} \mE L^{(2)}(c;t)  
& \leq & 
\lim_{d\rightarrow \infty } \min_{\gamma_x,\nu_x} 
\sqrt{ \lp \frac{1}{d} \sum_{i=1}^{d} 2\s_i^2\frac{  -2\v_{x,i} + 2a\nu_{x,i}  }{\nu_{x,i}^2-\v_{x,i}^2} \rp ( 2\gamma_x
 +2 c^2
        +\nu_x q c^2) }
        \nonumber \\
        & = & 
\lim_{d\rightarrow \infty }  \min_{\gamma_x,\nu_x} 
\sqrt{ \lp \frac{1}{d}  \sum_{i=1}^{d} 4\s_i^2\frac{  \v_{x,i} - a\nu_{x,i}  }{\v_{x,i}^2 - \nu_{x,i}^2} \rp ( 2\gamma_x
 +2 c^2
        +\nu_x q c^2) }
               \nonumber \\
        & = & 
\lim_{d\rightarrow \infty }  \min_{\gamma_x,\nu_x} 
\sqrt{ \lp \frac{1}{d}  \sum_{i=1}^{d} 4\s_i^2\frac{  2(\gamma_x+\s_i^2) - a\nu_x\s_i^2  }{4(\gamma_x+\s_i^2)^2 - \nu_x^2\s_i^4} \rp ( 2\gamma_x
 +2 c^2
        +\nu_x q c^2) }.
\end{eqnarray}

The following theorem summarizes the above characterization of $\lim_{d\rightarrow \infty }\frac{1}{\sqrt{d}} \mE L^{(2)}(c;t)  $.

\begin{theorem}
 \label{thm:lbstrthm2}  
Assume the setup of Theorem \ref{thm:lbstrthm1}. Let $\bar{L}$ be as in (\ref{eq:hrd9}). Set 
\begin{eqnarray}
   \label{eq:lbstrthm2eq1}
\bar{L}^{(2)} \triangleq
\lp \frac{1}{d}  \sum_{i=1}^{d} 4\s_i^2\frac{  2(\gamma_x+\s_i^2) - (1-t+qt)\nu_x\s_i^2  }{4(\gamma_x+\s_i^2)^2 - \nu_x^2\s_i^4} \rp ( 2\gamma_x
 +2 c^2
        +\nu_x q c^2) .
\end{eqnarray}
  One then has  
\begin{eqnarray}
   \label{eq:lbstrthm2eq2}
\lim_{d\rightarrow \infty } \frac{1}{\sqrt{2d}c} \mE D^{(2)}(c;t) 
 \leq  
\lim_{d\rightarrow \infty } \frac{1}{\sqrt{d}} \mE L^{(2)}(c;t) 
 \leq  
\lim_{d\rightarrow \infty } \sqrt{ \min_{\gamma_x,\nu_x}  \bar{L}^{(2)}}.
\end{eqnarray}
Moreover, provided 
\begin{eqnarray}
   \label{eq:lbstrthm2eq3}
\lim_{d\rightarrow \infty }  \sqrt{\max_{t\in (0,1),q\in(-1,1)}\min_{\gamma_x,\nu_x}  \bar{L}^{(2)} } 
 < 2\lim_{d\rightarrow \infty } \sqrt{ \min_{\gamma_x}  \bar{L}  } ,
\end{eqnarray}
one also has that the condition in (\ref{eq:lbstr5}) is met, which then implies 
\begin{eqnarray}
   \label{eq:lbstrthm2eq4} 
\lim_{d\rightarrow \infty } \frac{1}{\sqrt{2d}c} \mE D(c,1)= \lim_{d\rightarrow \infty } \frac{1}{\sqrt{2d}c} \mE D(c,0),
\end{eqnarray}
and based on (\ref{eq:lbstr2}) and (\ref{eq:lbstr2a0})
  \begin{eqnarray}
   \label{eq:lbstrthm2eq5} 
 \lim_{d\rightarrow \infty } \frac{1}{\sqrt{2d}c}\mE B(c)  = \lim_{d\rightarrow \infty } \frac{1}{\sqrt{d}} \mE L(c),
  \end{eqnarray}
which implies equality in (\ref{eq:thm3eq2}) as well.
\end{theorem}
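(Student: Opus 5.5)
The proof assembles results already established; the only new computation is the second inequality in (\ref{eq:lbstrthm2eq2}). The first inequality in (\ref{eq:lbstrthm2eq2}) is exactly Theorem \ref{thm:lbstrthm1}, passed to the limit $d\rightarrow\infty$.

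For the second inequality, I will rely on the derivation (\ref{eq:l2hrd1})--(\ref{eq:l2hrd14}). First, replace $(\g^{(1,1)},\g^{(1,3)},\g^{(2)})$ by the equivalent pair $(\g^{(x,1)},\g^{(x,2)})$ with correlation $a=1-t+qt$. This is justified because only the joint law of $\sqrt{t}\g^{(1,1)}+\sqrt{1-t}\g^{(2)}$ and $\sqrt{t}\g^{(1,3)}+\sqrt{1-t}\g^{(2)}$ enters. Next, use weak duality (\ref{eq:l2hrd3}). This holds for any fixed $\gamma_1=\gamma_2=\gamma$, $\gamma_{01}=\gamma_{02}=\gamma_0$, $\nu$; no symmetry argument is needed, since restricting the dual variables only weakens the upper bound. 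The inner minimization over $\x^{(1)},\x^{(2)}$ is then solved explicitly as in (\ref{eq:l2hrd4a10}), which requires the per-coordinate $2\times 2$ quadratic forms to be positive definite, i.e.\ $\v_i>|\nu_i|$. After rescaling by $\gamma_0$ (\ref{eq:l2hrd4a11}), optimize in closed form over $\gamma_0>0$ to get (\ref{eq:l2hrd4a13}).

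It then remains to take expectations, using concentration as in Remark \ref{rem:rem0}. For each fixed $(\gamma_x,\nu_x)$, $\mE \z_{1,i}^2=\mE\z_{2,i}^2=\s_i^2$ and $\mE\z_{1,i}\z_{2,i}=a\s_i^2$. By the law of large numbers, the normalized sum converges to the bracket in $\bar{L}^{(2)}$. Jensen's inequality ($\mE\sqrt{\cdot}\leq\sqrt{\mE\cdot}$) gives the bound for each fixed pair, and minimizing over $(\gamma_x,\nu_x)$ afterwards gives (\ref{eq:lbstrthm2eq2}).

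For the "moreover" part, assume (\ref{eq:lbstrthm2eq3}). By (\ref{eq:hrd8}), the right-hand side of (\ref{eq:lbstrthm2eq3}) equals $2\lim\frac{1}{\sqrt d}\mE L(c)=2\lim \frac{1}{\sqrt{2d}c}\mE D(c;0)$, using (\ref{eq:lbstr2}). The left-hand side dominates $\lim\frac{1}{\sqrt{2d}c}\mE D^{(2)}(c;t)$ uniformly in $t\in(0,1)$, $q\in(-1,1)$, by (\ref{eq:lbstrthm2eq2}). The case $t=0$ is trivial, because the $\g^{(2)}$ terms then decouple and each replica is bounded by $D(c;0)$. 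Hence the strict inequality (\ref{eq:lbstr5}) holds for every $t\in[0,1)$, uniformly in $q$.

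I then invoke the interpolation/replica machinery cited before (\ref{eq:lbstr6}), in the form of the analogues of Talagrand's Theorem 2.4 in \cite{TalSK06} and Theorem 5.2 in \cite{TalSph06}, together with Corollaries 3 and 4 of \cite{Stojnicgscompyx16,Stojnicgscomp16}. This gives (\ref{eq:lbstrthm2eq4}). The mechanism is as follows. The $t$-derivative of $\mE D(c;t)$, computed by Gaussian integration by parts, equals a Gibbs average of the overlap discrepancy $(c^2-(\x^{(1)})^TS^TS\x^{(2)})^2$. This is the same quantity that appears in (\ref{eq:ubstrmr5}). The strict "cannot double" inequality forces the Gibbs measure at zero temperature to concentrate on $q=1$. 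Hence the derivative vanishes in the limit and $\mE D(c;t)$ is constant on $[0,1]$.

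Combining (\ref{eq:lbstrthm2eq4}) with (\ref{eq:lbstr2}) and the sandwich (\ref{eq:lbstr2a0}) yields (\ref{eq:lbstrthm2eq5}). Substituting (\ref{eq:lbstrthm2eq5}) into Theorem \ref{thm:strthm1} gives $\lim\mE\xi(c)\geq c+\frac{1}{\sqrt{\alpha}}\sqrt{\min_{\gamma_x}\bar L}$. This matches the upper bound of Theorem \ref{thm:thm1} and (\ref{eq:hrd8}), so $\lim\mE\xi(c)$ is determined exactly for each $c$. Plugging this into (\ref{eq:inteq1ab00}) (with concentration), the maximization over $c$ gives equality in (\ref{eq:thm3eq2}).

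The main obstacle is the transfer step from (\ref{eq:lbstr5}) to (\ref{eq:lbstrthm2eq4}). It requires adapting Talagrand's positive-temperature argument to a zero-temperature maximum over the non-standard set $\{\x\in\mS^d,\|S\x\|_2=c\}$. The plan is to carry out that argument at inverse temperature $\beta$, with a free energy that is uniform in $\beta$, and then let $\beta\rightarrow\infty$. A secondary point to check is the uniformity of the strict gap in $q$ near $\pm1$, including the excluded endpoint.
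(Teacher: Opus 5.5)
Your overall route is the paper's. The first inequality in (\ref{eq:lbstrthm2eq2}) is Theorem \ref{thm:lbstrthm1}, and the second comes from the Lagrangian computation (\ref{eq:l2hrd1})--(\ref{eq:l2hrd14}). The rest is assembled from (\ref{eq:lbstr2}), (\ref{eq:lbstr2a0}), (\ref{eq:lbstr5}), (\ref{eq:hrd8}), the cited Talagrand-type replica machinery, and the sandwich (\ref{eq:adellam2}). Your remarks that weak duality needs no symmetry argument and that Jensen gives the bound for each fixed $(\gamma_x,\nu_x)$ are correct.

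The one step that does not deliver what you need is ``optimize in closed form over $\gamma_0>0$''. Take the $c$ that matters, namely the maximizer in (\ref{eq:thm3eq2}). There (\ref{eq:hrd15a0}) gives $\tilde{\gamma}_x\leq -2c^2<-\min_i\s_i^2$. Finiteness of the one-replica dual requires $\gamma_0(\gamma_x+\s_i^2)>0$ for all $i$, which then forces $\gamma_0<0$ and $\tilde{\gamma}_x<-\max_i\s_i^2$. With $\gamma_0>0$ and $\v_i>|\nu_i|$, you only reach pairs with $2(\gamma_x+\s_i^2)>|\nu_x|\s_i^2$.

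This set excludes $(\tilde{\gamma}_x,0)$. That is exactly the point where $\bar{L}^{(2)}=4\min_{\gamma_x}\bar{L}$, and the point from which Theorem \ref{thm:lbstrthm3} verifies (\ref{eq:lbstrthm2eq3}). So you establish (\ref{eq:lbstrthm2eq2}) only with the minimum taken over a strictly smaller set. Your step ``the left-hand side of (\ref{eq:lbstrthm2eq3}) dominates $\lim\frac{1}{\sqrt{2d}c}\mE D^{(2)}(c;t)$'' is then unjustified. The repair is to keep the branch $\gamma_0<0$ as well, i.e.\ $\v_{x,i}<-|\nu_{x,i}|$ with $2\gamma_x+2c^2+\nu_xqc^2<0$. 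On that branch both factors under the root in (\ref{eq:l2hrd4a13}) are negative, and maximizing over $\gamma_0<0$ gives the same closed form.

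Two smaller points. First, as you partly suspect, the gap in (\ref{eq:lbstrthm2eq3}) is not uniform in $q$:
\begin{itemize}
\item At $\nu_x=0$ one has $\bar{L}^{(2)}=4\bar{L}$ for every $(t,q)$.
\item As $q\rightarrow 1$, $\lim_{d\rightarrow\infty}\frac{1}{\sqrt{d}}\mE L^{(2)}(c;t)\rightarrow 2\lim_{d\rightarrow\infty}\frac{1}{\sqrt{d}}\mE L(c)$.
\item Hence, by (\ref{eq:lbstrthm2eq2}) and (\ref{eq:hrd8}), $\min_{\gamma_x,\nu_x}\bar{L}^{(2)}\rightarrow 4\min_{\gamma_x}\bar{L}$.
\end{itemize}
So (\ref{eq:lbstrthm2eq3}) can only hold pointwise in $(t,q)$, and you cannot conclude (\ref{eq:lbstr5}) ``uniformly in $q$''. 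What the replica argument needs is a uniform gap on $q\leq 1-\epsilon$ for each $\epsilon>0$. That follows from pointwise strictness by continuity and compactness.

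Second, at $t=0$ your decoupling argument yields only a non-strict inequality. Strictness there needs an actual argument. One option is the computation of Theorem \ref{thm:lbstrthm3}: its $\nu_x$-derivative is proportional to $(1-t)(1-q)$, so it is nonzero at $t=0$ as well.
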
 

\begin{proof}
  The first part, i.e.,  (\ref{eq:lbstrthm2eq2}), follows from (\ref{eq:l2hrd14}) and the preceding discussion after one recalls the definition of $a$ from (\ref{eq:l2lbstrthm1eq1a0}). The second part and equations (\ref{eq:lbstrthm2eq3})-(\ref{eq:lbstrthm2eq5}) follow from  (\ref{eq:lbstr2}), (\ref{eq:lbstr2a0}), (\ref{eq:lbstr5}), and (\ref{eq:hrd8}). To see that (\ref{eq:lbstrthm2eq5}) indeed implies equality in (\ref{eq:thm3eq2}),
we first observe that  Theorems \ref{thm:thm1} and \ref{thm:strthm1} (equations (\ref{eq:thm1eq2}) and (\ref{eq:strthm1eq2})) give
 \begin{eqnarray}
   \label{eq:adellam2}
c +  \frac{1}{\sqrt{2n}c} \mE B(c) \leq \mE \xi(c)  \leq c +  \frac{1}{\sqrt{n}} \mE L(c).
\end{eqnarray}
From (\ref{eq:lbstrthm2eq5}) and (\ref{eq:adellam2}) we then obatin
 \begin{eqnarray}
   \label{eq:adellam4}
\lim_{d\rightarrow \infty } \mE \xi(c)  = c +  \lim_{d\rightarrow \infty } \frac{1}{\sqrt{n}} \mE L(c).
= c +  \frac{1}{\sqrt{\alpha}}\lim_{d\rightarrow \infty } \frac{1}{\sqrt{d}} \mE L(c).
\end{eqnarray}
A combination of (\ref{eq:hrd8})--(\ref{eq:hrd11}) and results of Theorem \ref{thm:thm3} then gives that one indeed has equality in (\ref{eq:thm3eq2}). 
\end{proof}

\begin{remark}
\label{rem:rem2}
Throughout the entire analysis, we considered deterministic nature of the true covariance. In that case $\s$ is taken as a converging empirical distribution. The problem can be much easier if the true covariance is random with a specified pdf. Empirical averages should be then replaced by the analytical ones obtained for a given spectral pdf. In other words, all formulas given in this Section \ref{sec:handlerd} remain valid if summations are replaced by integrations over given pdfs.
\end{remark}

\subsubsubsection{Checking condition (\ref{eq:lbstrthm2eq3})}
\label{sec:chkcond}

Theorem  \ref{thm:lbstrthm2} provides all needed ingredients so that one can numerically check whether (\ref{eq:lbstrthm2eq3}) holds. As noted in  Remark \ref{rem:rem2}, when  the true covariance $\Sigma$ is random and characterized via its spectral density, all empirical averages become integrals. One then numerically evaluates $\bar{L}^{(2)}$ and  $\bar{L}$ and if (\ref{eq:lbstrthm2eq3}) holds, the proof is completed up to the level of numerical precision. Since numerical precision can be arbitrary one can effectively approach equality in (\ref{eq:lbstrthm2eq5}) arbitrarily closely. We tested quite a few ensembles and always obtained that (\ref{eq:lbstrthm2eq3}) holds.

It is even more interesting to apply the same methodology to deterministic setup presented in the previous section. In that case one can not numerically rely on infinite sizes. Nonetheless, one can expect the very same behavior for sufficiently large $d$. To see whether this indeed happens, we conducted a set of numerical evaluations and the obtained results are shown in Figure \ref{fig:fig1}. We selected $d=3000$ and $\s=\mbox{linspace}_d[0.5,1]$ ($\mbox{linspace}_d[0.5,1]$ is an increasing  vector of $d$ components equally spaced between  $0.5$ and $1$; in other words, $\s_i-\s_{i-1}=\s_j-\s_{j-1}$ for any $i$ and $j$; both endpoints of the interval are included as well, i.e., $\s_1=0.5$ and $\s_d=1$). While in theory $d\rightarrow \infty$, we found that $d$ on the order of a few thousands already behaves very well. As Figure \ref{fig:fig1} indicates, there are no local optima and the condition (\ref{eq:lbstrthm2eq3})  from Theorem  \ref{thm:lbstrthm2}  is indeed
 met (none of the curves reaches the maximum except at $q=1$).
 
   \begin{figure}[h]
\centering
\centerline{\includegraphics[width=.87\linewidth]{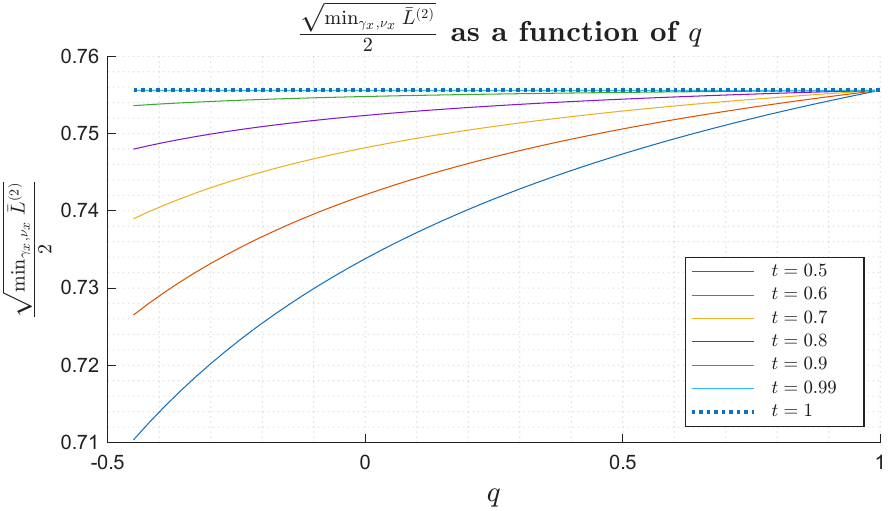}}
\caption{ $\frac{\sqrt{\min_{\gamma_x,\nu_x}\bar{L}^{(2)}}}{2}$ as a function of $q$ for varying $t$; $\s=\mbox{linspace}[0.5,1]$; $d=3000$ }
\label{fig:fig1}
\end{figure}

We complement the above numerical findings with the following theorem which analytically confirms that condition (\ref{eq:lbstrthm2eq3}) is met.
\begin{theorem}
 \label{thm:lbstrthm3}  
Assume the setup of Theorems \ref{thm:lbstrthm1} and \ref{thm:lbstrthm2}. One then has that (\ref{eq:lbstrthm2eq3}) holds.
\end{theorem}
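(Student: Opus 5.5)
The plan is to exhibit, for every fixed $t\in(0,1)$ and $q\in(-1,1)$, an explicit pair $(\gamma_x,\nu_x)$ at which $\bar{L}^{(2)}$ lies strictly below $4\bar{L}$ evaluated at the optimal $\hat{\gamma}_x$. The first step is the observation that the choice $\nu_x=0$ reproduces the single-replica quantity exactly. Indeed, with $\nu_x=0$ the first factor of $\bar{L}^{(2)}$ in (\ref{eq:lbstrthm2eq1}) becomes $\frac{1}{d}\sum_i 4\s_i^2\frac{2(\gamma_x+\s_i^2)}{4(\gamma_x+\s_i^2)^2}=2z$, with $z$ as in (\ref{eq:hrd15}), and the second factor is $2(\gamma_x+c^2)$. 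Hence $\bar{L}^{(2)}|_{\nu_x=0}=4z(\gamma_x+c^2)=4\bar{L}$. Taking $\gamma_x=\hat{\gamma}_x$ then gives $\min_{\gamma_x,\nu_x}\bar{L}^{(2)}\leq 4\bar{L}(\hat{\gamma}_x)$, which already yields (\ref{eq:lbstrthm2eq3}) with ``$\leq$''. Strictness has to come from perturbing $\nu_x$ away from $0$.

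The second step is to differentiate $\bar{L}^{(2)}$ in $\nu_x$ at $\nu_x=0$, $\gamma_x=\hat{\gamma}_x$. Write $\bar{L}^{(2)}=F(\nu_x)H(\nu_x)$ with $a=1-t+qt$. One finds
\begin{equation*}
F(0)=2z,\quad F'(0)=-a\phi_1(\hat{\gamma}_x),\quad H(0)=2(\hat{\gamma}_x+c^2),\quad H'(0)=qc^2 .
\end{equation*}
Here the term quadratic in $\nu_x$ in the denominator does not contribute at first order.

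Next I will use the stationarity of $\bar{L}$ in $\gamma_x$. By (\ref{eq:hrd12}) it reads $\phi_1=c^2\phi_2$, and by (\ref{eq:hrd18}) it gives $z=\hat{\gamma}_x\phi_2+\phi_1=\phi_2(\hat{\gamma}_x+c^2)$. Substituting these,
\begin{equation*}
\frac{d\bar{L}^{(2)}}{d\nu_x}\Big|_{\nu_x=0}=-2a\phi_1(\hat{\gamma}_x+c^2)+2zqc^2=2zc^2(q-a)=2zc^2(1-t)(q-1).
\end{equation*}
This is nonzero (and, since $z,c^2>0$, strictly negative) whenever $t<1$ and $q<1$. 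Therefore a small positive $\nu_x$ strictly decreases $\bar{L}^{(2)}$ below $4\bar{L}(\hat{\gamma}_x)$. Taking square roots and the limit $d\rightarrow\infty$ (the empirical averages converge, and the derivative limit is nonzero), this gives $\lim\sqrt{\min_{\gamma_x,\nu_x}\bar{L}^{(2)}}<2\lim\sqrt{\min_{\gamma_x}\bar{L}}$ for each $(t,q)$. The max over $(t,q)$ in (\ref{eq:lbstrthm2eq3}) is understood pointwise over the open ranges, as in (\ref{eq:lbstr5}); the gap vanishes only at $q=1$ or $t=1$, consistent with Figure \ref{fig:fig1}.

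The main obstacle I expect is justifying the calculus rigorously. First, $\bar{L}^{(2)}$ must be smooth near $\nu_x=0$, which requires $4(\hat{\gamma}_x+\s_i^2)^2-\nu_x^2\s_i^4$ to stay bounded away from zero uniformly in $i$. This needs $\hat{\gamma}_x+\s_i^2$ bounded away from zero, which I will take from the standing assumption of Theorem \ref{thm:thm3} that all quantities are real and finite, together with the spectrum lying in a fixed interval. Second, there is a subtlety in what ``$\min_{\gamma_x}\bar{L}$'' means. Since $\hat{\gamma}_x<0$, the relevant critical point may be a saddle point rather than a global minimum. I will therefore phrase both sides at the same stationary point $\hat{\gamma}_x$ selected in Theorem \ref{thm:thm3}, and note that the upper bound (\ref{eq:l2hrd14}) holds for any admissible $(\gamma_x,\nu_x)$, in particular the perturbed one. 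The positivity of $\bar{L}^{(2)}$ under the square root near $\nu_x=0$ then follows by continuity from $4\bar{L}(\hat{\gamma}_x)>0$.
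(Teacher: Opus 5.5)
Your argument is correct and is essentially the paper's. The paper likewise observes that $(\tilde{\gamma}_x,0)$ reproduces $4\bar{L}$, and uses $\phi_1=c^2\phi_2$ to compute $\frac{d\bar{L}^{(2)}}{d\nu_x}$ there as $-2(1-t)(1-q)\phi_1(\tilde{\gamma}_x+c^2)=2zc^2(1-t)(q-1)\neq 0$; it only phrases this as a contradiction with stationarity of the minimizer rather than as your direct descent step.

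One small correction: $z$ need not be positive, so you should perturb $\nu_x$ with sign opposite to the derivative rather than taking $\nu_x>0$, since only the nonvanishing of the derivative matters. Indeed, at the paper's optimal $c$ one has $\hat{\gamma}_x\le -2c^2$, which forces $\hat{\gamma}_x+\s_i^2<0$ for all $i$ and hence $z<0$.
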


\begin{proof}
Assume the opposite, i.e., that (\ref{eq:lbstrthm2eq3}) does not hold. Keeping in mind (\ref{eq:hrd9})  and (\ref{eq:lbstrthm2eq1}), this means that there are some $\hat{\gamma}_x$, $\hat{\nu}_x$, and $\tilde{\gamma}_x$, that satisfy 
\begin{eqnarray}
   \label{eq:lemprf0}
(\hat{\gamma}_x,\hat{\nu}_x) = \mbox{arg}\min_{\gamma_x,\nu_x}\bar{L}^{(2)} \quad\quad \mbox{and}\quad\quad
\tilde{\gamma}_x   = \mbox{arg}\min_{\gamma_x}\bar{L},
\end{eqnarray}
and for which the following holds
\begin{eqnarray}
   \label{eq:lemprf1}
\lp \frac{1}{d}  \sum_{i=1}^{d} 4\s_i^2\frac{  2(\hat{\gamma}_x+\s_i^2) - (1-t+qt)\hat{\nu}_x\s_i^2  }{4(\hat{\gamma}_x+\s_i^2)^2 - \hat{\nu}_x^2\s_i^4} \rp ( 2\hat{\gamma}_x
 +2 c^2
        +\hat{\nu}_x q c^2) 
        & = & 
        \min_{\gamma_x,\nu_x}\bar{L}^{(2)} 
        \nonumber \\
& = & 4\min_{\gamma_x}\bar{L} 
\nonumber \\
& = & 
4  \lp \frac{1}{d}\sum_{i=1}^{d}\frac{  \s_i^2}{\tilde{\gamma}_x  + \s_i^2} 
 (\tilde{\gamma}_x  + c^2) \rp  .
\end{eqnarray}
Moreover,
\begin{eqnarray}
   \label{eq:lemprf2}
 \left . \frac{d\bar{L}^{(2)}}{d\gamma_x} \right |_{(\gamma_x,\nu_x)=( \hat{\gamma}_x,\hat{\nu}_x)}
 =
 \left . \frac{d\bar{L}^{(2)}}{d\nu_x} \right |_{(\gamma_x,\nu_x)=( \hat{\gamma}_x,\hat{\nu}_x)}
 =\left . \frac{d\bar{L}}{d\gamma_x} \right |_{\gamma_x= \tilde{\gamma}_x}= 0 .
 \end{eqnarray}
We then observe that the choice $(\hat{\gamma}_x,\hat{\nu}_x)=(\tilde{\gamma}_x,0)$ satisfies (\ref{eq:lemprf1}) since
\begin{eqnarray}
   \label{eq:lemprf3}
\lp \frac{1}{d}  \sum_{i=1}^{d} 4\s_i^2\frac{  2(\tilde{\gamma}_x+\s_i^2) - (1-t+qt)0\s_i^2  }{4(\tilde{\gamma}_x+\s_i^2)^2 - 0^2\s_i^4} \rp ( 2 \tilde{\gamma}_x
 +2 c^2
        + 0 q c^2) 
 & = & 
 \lp \frac{1}{d}  \sum_{i=1}^{d} 4\s_i^2\frac{  2( \tilde{\gamma}_x+\s_i^2)    }{4(\tilde{\gamma}_x+\s_i^2)^2  } \rp ( 2\tilde{\gamma}_x
 +2 c^2 ) 
\nonumber \\& = & 
 \lp \frac{1}{d}  \sum_{i=1}^{d} \frac{ 2\s_i^2   }{(\tilde{\gamma}_x+\s_i^2) } \rp ( 2\tilde{\gamma}_x
 +2 c^2 ) 
\nonumber \\
& = & 
 4 \lp \frac{1}{d}\sum_{i=1}^{d}\frac{  \s_i^2}{ \tilde{\gamma}_x  + \s_i^2} 
 (\tilde{\gamma}_x  + c^2) \rp  .
\end{eqnarray}
Utilization of (\ref{eq:hrd12}) gives
 \begin{eqnarray}
   \label{eq:lemprf4}
\left . \frac{d\bar{L}}{d\gamma_x} \right |_{\gamma_x= \tilde{\gamma}_x }  =
   \frac{1}{d}\sum_{i=1}^{d}\frac{ \s_i^2( \s_i^2 -c^2)}{( \tilde{\gamma}_x  + \s_i^2)^2} = 0.
\end{eqnarray}
 We further find
 \begin{eqnarray}
   \label{eq:lemprf5}
  \frac{d\bar{L}^{(2)}}{d\gamma_x} 
  & = &
\frac{d}{d\gamma_x}  \lp \frac{1}{d}  \sum_{i=1}^{d} 4\s_i^2\frac{  2(\gamma_x+\s_i^2) - (1-t+qt)\nu_x\s_i^2  }{4(\gamma_x+\s_i^2)^2 - \nu_x^2\s_i^4} \rp ( 2\gamma_x
 +2 c^2
        +\nu_x q c^2) 
\nonumber \\
& & +
2\lp \frac{1}{d}  \sum_{i=1}^{d} 4\s_i^2\frac{  2( \gamma_x+\s_i^2) - (1-t+qt) \nu_x\s_i^2  }{4( \gamma_x+\s_i^2)^2 - \nu_x^2\s_i^4} \rp 
\nonumber \\
   & = &
  \lp \frac{1}{d}  \sum_{i=1}^{d} 4\s_i^2\frac{  2  }{4(\gamma_x+\s_i^2)^2 - \nu_x^2\s_i^4} \rp ( 2\gamma_x
 +2 c^2
        +\nu_x q c^2) 
\nonumber \\
& & -
  \lp \frac{1}{d}  \sum_{i=1}^{d} 4\s_i^2\frac{  2(\gamma_x+\s_i^2) - (1-t+qt)\nu_x\s_i^2  }{ ( 4(\gamma_x+\s_i^2)^2 - \nu_x^2\s_i^4 )^2 }8(\gamma_x+\s_i^2) \rp ( 2\gamma_x
 +2 c^2
        +\nu_x q c^2) 
\nonumber \\
& & +
2\lp \frac{1}{d}  \sum_{i=1}^{d} 4\s_i^2\frac{  2( \gamma_x+\s_i^2) - (1-t+qt) \nu_x\s_i^2  }{4( \gamma_x+\s_i^2)^2 - \nu_x^2\s_i^4} \rp .
 \end{eqnarray}
Evaluating for $(\gamma_x,\nu_x)=(\tilde{\gamma}_x,0)$ gives
 \begin{eqnarray}
   \label{eq:lemprf6}
\left .  \frac{d\bar{L}^{(2)}}{d\gamma_x} \right |_{(\gamma_x,\nu_x)=(\tilde{\gamma}_x,0)} 
    & = &
\  \lp \frac{1}{d}  \sum_{i=1}^{d} 4\s_i^2\frac{  2  }{4(\tilde{\gamma}_x+\s_i^2)^2 } \rp ( 2 \tilde{\gamma}_x
 +2 c^2) 
\nonumber \\
& &  -
  \lp \frac{1}{d}  \sum_{i=1}^{d} 4\s_i^2\frac{  2(\tilde{\gamma}_x+\s_i^2)   }{ ( 4(\tilde{\gamma}_x+\s_i^2)^2 )^2 }8(\tilde{\gamma}_x+\s_i^2) \rp ( 2 \tilde{\gamma}_x
 +2 c^2) 
  +
2\lp \frac{1}{d}  \sum_{i=1}^{d} 4\s_i^2\frac{  2( \tilde{\gamma}_x +\s_i^2)    }{4( \tilde{\gamma}_x+\s_i^2)^2  } \rp 
\nonumber \\
& =& 
4\lp \frac{1}{d}  \sum_{i=1}^{d} \s_i^2\frac{  ( \s_i^2 -c^2 )    }{( \tilde{\gamma}_x +\s_i^2)^2  } \rp 
\nonumber \\
& = & 
0,
 \end{eqnarray}
 where the last equality follows from (\ref{eq:lemprf4}). For the $\nu$ derivative, we find
 \begin{eqnarray}
   \label{eq:lemprf7}
  \frac{d\bar{L}^{(2)}}{d\nu_x} 
  & = &
\frac{d}{d\nu_x}  \lp \frac{1}{d}  \sum_{i=1}^{d} 4\s_i^2\frac{  2(\gamma_x+\s_i^2) - (1-t+qt)\nu_x\s_i^2  }{4(\gamma_x+\s_i^2)^2 - \nu_x^2\s_i^4} \rp ( 2\gamma_x
 +2 c^2
        +\nu_x q c^2) 
\nonumber \\
& & +
qc^2\lp \frac{1}{d}  \sum_{i=1}^{d} 4\s_i^2\frac{  2( \gamma_x+\s_i^2) - (1-t+qt) \nu_x\s_i^2  }{4( \gamma_x+\s_i^2)^2 - \nu_x^2\s_i^4} \rp 
\nonumber \\
   & = &
  -\lp \frac{1}{d}  \sum_{i=1}^{d} 4\s_i^2\frac{  (1-t+qt)\s_i^2  }{4(\gamma_x+\s_i^2)^2 - \nu_x^2\s_i^4} \rp ( 2\gamma_x
 +2 c^2
        +\nu_x q c^2) 
\nonumber \\
& & +
  \lp \frac{1}{d}  \sum_{i=1}^{d} 4\s_i^2\frac{  2(\gamma_x+\s_i^2) - (1-t+qt)\nu_x\s_i^2  }{ ( 4(\gamma_x+\s_i^2)^2 - \nu_x^2\s_i^4 )^2 }2(\nu_x\s_i^4) \rp ( 2\gamma_x
 +2 c^2
        +\nu_x q c^2) 
\nonumber \\
& & +
qc^2\lp \frac{1}{d}  \sum_{i=1}^{d} 4\s_i^2\frac{  2( \gamma_x+\s_i^2) - (1-t+qt) \nu_x\s_i^2  }{4( \gamma_x+\s_i^2)^2 - \nu_x^2\s_i^4} \rp .
 \end{eqnarray}
Evaluating again for $(\gamma_x,\nu_x)=(\tilde{\gamma}_x,0)$ gives
 \begin{eqnarray}
   \label{eq:lemprf8}
\left .  \frac{d\bar{L}^{(2)}}{d\nu_x} \right |_{(\gamma_x,\nu_x)=(\tilde{\gamma}_x,0)}
    & = &
  -\lp \frac{1}{d}  \sum_{i=1}^{d} 4\s_i^2\frac{  (1-t+qt)\s_i^2  }{4(\tilde{\gamma}_x+\s_i^2)^2 } \rp ( 2\tilde{\gamma}_x
 +2 c^2) 
\nonumber \\
 & & +
qc^2\lp \frac{1}{d}  \sum_{i=1}^{d} 4\s_i^2\frac{  2( \tilde{\gamma}_x+\s_i^2)  }{4(\tilde{\gamma}_x+\s_i^2)^2  } \rp 
    \nonumber \\
    & = &
  -\lp \frac{1}{d}  \sum_{i=1}^{d} 4\s_i^2\frac{  (1-t+qt)\s_i^2  }{4(\tilde{\gamma}_x+\s_i^2)^2 } \rp ( 2\tilde{\gamma}_x
 +2 c^2) 
 +
qc^2\lp \frac{1}{d}  \sum_{i=1}^{d} 4\s_i^2\frac{  2( \tilde{\gamma}_x +c^2)  }{4( \tilde{\gamma}_x+\s_i^2)^2  } \rp 
\nonumber \\
 & & +
qc^2\lp \frac{1}{d}  \sum_{i=1}^{d} 4\s_i^2\frac{  2( \tilde{\gamma}_x + \s_i^2)  }{4( \tilde{\gamma}_x + \s_i^2)^2  } \rp 
-
qc^2\lp \frac{1}{d}  \sum_{i=1}^{d} 4\s_i^2\frac{  2( \tilde{\gamma}_x + c^2)  }{4( \tilde{\gamma}_x +\s_i^2)^2  } \rp 
\nonumber \\
 & = &
  \lp \frac{1}{d}  \sum_{i=1}^{d} 4\s_i^2\frac{ qc^2 - (1-t+qt)\s_i^2  }{4(\tilde{\gamma}_x+\s_i^2)^2 } \rp ( 2\tilde{\gamma}_x
 +2 c^2) 
+qc^2\lp \frac{1}{d}  \sum_{i=1}^{d} 4\s_i^2\frac{  2( \s_i^2-c^2 )  }{4( \tilde{\gamma}_x+\s_i^2)^2  } \rp 
\nonumber \\
 & = &
  \lp \frac{1}{d}  \sum_{i=1}^{d} 4\s_i^2\frac{ qc^2 - (1-t+qt)\s_i^2  }{4(\tilde{\gamma}_x+\s_i^2)^2 } \rp ( 2\tilde{\gamma}_x +2 c^2) 
\nonumber \\
 & = &
  \lp \frac{1}{d}  \sum_{i=1}^{d} \s_i^2\frac{ qc^2 -q\s_i^2 +q\s_i^2 - (1-t+qt)\s_i^2  }{(\tilde{\gamma}_x+\s_i^2)^2 } \rp ( 2\tilde{\gamma}_x +2 c^2) 
\nonumber \\
 & = &
  \lp \frac{1}{d}  \sum_{i=1}^{d} \s_i^2\frac{ qc^2 -q\s_i^2   }{(\tilde{\gamma}_x+\s_i^2)^2 } \rp ( 2\tilde{\gamma}_x +2 c^2) 
  \nonumber \\
 &  &
  + \lp \frac{1}{d}  \sum_{i=1}^{d} \s_i^2\frac{  q\s_i^2 - (1-t+qt)\s_i^2  }{(\tilde{\gamma}_x+\s_i^2)^2 } \rp ( 2\tilde{\gamma}_x +2 c^2) 
\nonumber \\
 & = &
    \lp \frac{1}{d}  \sum_{i=1}^{d} \s_i^2\frac{  q\s_i^2 - (1-t+qt)\s_i^2  }{(\tilde{\gamma}_x + \s_i^2)^2 } \rp ( 2 \tilde{\gamma}_x +2 c^2) 
\nonumber \\
 & = &
 -   \lp \frac{1}{d}  \sum_{i=1}^{d} \s_i^4\frac{ (1-t)(1-q)  }{(\tilde{\gamma}_x +\s_i^2)^2 } \rp ( 2\tilde{\gamma}_x +2 c^2) 
\nonumber \\
 & \neq &
0,   \end{eqnarray}
 where fourth and seventh equality follow from (\ref{eq:lemprf4}). The last expression is different from zero  since $t\in(0,1)$ and $q\in(-1,1)$ (from (\ref{eq:hrd15a0}), one also has $\tilde{\gamma}_x\leq 2c^2$ which implies that $\tilde{\gamma}_x\neq c^2$ for optimal $c$). From (\ref{eq:lemprf8}), one has that $(\tilde{\gamma}_x,0)$ cannot be an $\bar{L}^{(2)} $'s stationary point, which contradicts (\ref{eq:lemprf2}) and completes the proof.
\end{proof}

\subsection{Matching $\delta$ and  $\mE\lambda_n(\hat{\Sigma}-\Sigma)$}
\label{sec:matchdellam}

We first recall/summarize all key aspects of the above analysis. From (\ref{eq:inteq1ab00}), we have
\begin{eqnarray}\label{eq:dellam1}
\lambda_n\lp \hat{\Sigma} -\Sigma   \rp 
   &  =  &
 \max_{c} \lp \lp \xi(c)\rp^2 - c^2 \rp.
\end{eqnarray}
Theorems \ref{thm:thm1} and \ref{thm:strthm1} (equations (\ref{eq:thm1eq2}) and (\ref{eq:strthm1eq2})) give
 \begin{eqnarray}
   \label{eq:dellam2}
c +  \frac{1}{\sqrt{2n}c} \mE B(c) \leq \mE \xi(c)  \leq c +  \frac{1}{\sqrt{n}} \mE L(c).
\end{eqnarray}
Also, Theorems \ref{thm:lbstrthm2} and \ref{thm:lbstrthm3}  give
   \begin{eqnarray}
   \label{eq:dellam3}
 \lim_{d\rightarrow \infty } \frac{1}{\sqrt{2d}c}\mE B(c)  = \lim_{d\rightarrow \infty } \frac{1}{\sqrt{d}} \mE L(c), 
  \end{eqnarray}
 and
  \begin{eqnarray}
   \label{eq:dellam4}
\lim_{d\rightarrow \infty } \mE \xi(c)   = c +  \frac{1}{\sqrt{\alpha}}\lim_{d\rightarrow \infty } \frac{1}{\sqrt{d}} \mE L(c).
\end{eqnarray}
Combining (\ref{eq:dellam1}) and (\ref{eq:dellam4}), we further find
\begin{eqnarray}\label{eq:dellam5}
\lim_{d\rightarrow \infty } \mE  \lambda_n\lp \hat{\Sigma} -\Sigma   \rp 
   &  =  & \max_c \lp  \frac{2c}{\sqrt{\alpha}}\lim_{d\rightarrow \infty } \frac{1}{\sqrt{d}} \mE L(c)
+
\frac{1}{\alpha}\lp \lim_{d\rightarrow \infty } \frac{1}{\sqrt{d}} \mE L(c) \rp^2 \rp.
 \end{eqnarray}
 The above is already a very convenient characterization of sample covariance estimation error. It basically relates to the maximal eigenvalue of a residual error matrix $\hat{\Sigma}-\Sigma$. One can go a step further and show that this also matches the corresponding characterization of the spectral norm as well. 

In particular, from (\ref{eq:amat1a0a6}), we have
\begin{equation}\label{eq:delam7}
 \|\hat{\Sigma} - \Sigma\|_2 = \max(\lambda_n(\hat{\Sigma} - \Sigma),|\lambda_1(\hat{\Sigma} - \Sigma)|). 
\end{equation}
The discussion from previous sections ensures that the righthand side of (\ref{eq:dellam4}), i.e., $\lim_{d\rightarrow \infty } \mE  \lambda_n ( \hat{\Sigma} -\Sigma ) $, is positive. If $\lim_{d\rightarrow \infty } \mE  \lambda_1(\hat{\Sigma} - \Sigma)\geq 0$ one then automatically has
\begin{equation}\label{eq:delam9}
\lim_{d\rightarrow \infty } \mE  \|\hat{\Sigma} - \Sigma\|_2 = \lim_{d\rightarrow \infty } \mE \lambda_n(\hat{\Sigma} - \Sigma) . 
\end{equation}
Therefore, the only interesting remaining scenario to consider is $\lim_{d\rightarrow \infty } \mE  \lambda_1 (\hat{\Sigma} - \Sigma) < 0$. To that end, we observe that when $\lambda_1 (\hat{\Sigma} - \Sigma) < 0$ then
\begin{eqnarray}\label{eq:dellam10}
\left |\lambda_1 \lp\hat{\Sigma} - \Sigma\rp \right | = \lambda_n \lp\Sigma -\hat{\Sigma}\rp
\end{eqnarray}
Following (\ref{eq:inteq1ad0})--(\ref{eq:inteq1ad2})
 \begin{eqnarray}\label{eq:dellam11}
\lambda_n \lp\Sigma -\hat{\Sigma}\rp  &  =  &
 \max_{\x\in\mS^d} \lp \x^T SS^T \x  -\frac{1}{n}\x^T S^TA^TAS \x \rp.
\end{eqnarray}
After setting
\begin{eqnarray}\label{eq:dellam12}
 \xi^-(c) & = &   
 \frac{1}{\sqrt{n}} \min_{\x\in\mS^d,\|S\x\|_2=c }  \sqrt{\x^T S^TA^TAS \x}
 =
 \frac{1}{\sqrt{n}} \min_{\x\in\mS^d,\|S\x\|_2=c }\max_{\y\in\mS^n }  \y AS \x ,
\end{eqnarray}
we have in place of (\ref{eq:dellam11}) 
\begin{eqnarray}\label{eq:dellam13}
\lambda_n \lp\Sigma -\hat{\Sigma}\rp  &  =  &
 \max_{\x\in\mS^d} \lp  \x^T SS^T \x  - \frac{1}{n}\x^T S^TA^TAS \x \rp 
 \nonumber \\
  &  =  &
 \max_{\x\in\mS^d,\|S\x\|_2=c,c} \lp \x^T SS^T \x - \frac{1}{n}\x^T S^TA^TAS \x  \rp 
 \nonumber \\
  &  =  &
 \max_{c} \lp  c^2 -  \min_{\x\in\mS^d,\|S\x\|_2=c} \frac{1}{n}\x^T S^TA^TAS \x\rp 
 \nonumber \\
  &  =  &
 \max_{c} \lp  c^2 - \lp \xi^-(c)\rp^2 \rp.
\end{eqnarray}
To characterize $\xi^-(c)$ and consequently $\lambda_n\lp \Sigma  - \hat{\Sigma}  \rp $, we again utilize RDT. The following theorem summarizes the obtained results. 

\begin{theorem}
\label{thm:dellamthm1}
Assume the setup of Theorem \ref{thm:thm1}. Let $\xi^-(c)$ be as in (\ref{eq:dellam12}).  One then has  
\begin{eqnarray}
   \label{eq:dellamthm1eq2}
\mE \xi^-(c)  \geq  \max \left \{c -  \frac{1}{\sqrt{n}} \mE L(c),0 \right \}.
\end{eqnarray}
\end{theorem}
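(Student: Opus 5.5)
The plan is to mirror the proof of Theorem \ref{thm:thm1}, now in its min-max form, i.e., Gordon's comparison rather than plain Slepian. The nonnegativity part of (\ref{eq:dellamthm1eq2}) is immediate: $\xi^-(c)$ is a minimum of norms $\frac{1}{\sqrt{n}}\|AS\x\|_2\geq 0$, so $\mE\xi^-(c)\geq 0$. It therefore remains to show $\mE\xi^-(c)\geq c-\frac{1}{\sqrt{n}}\mE L(c)$.

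\textbf{Comparison.} Index by $\cX=\{\x,\y\}$ with $\x\in\mS^d$, $\|S\x\|_2=c$, $\y\in\mS^n$, and reuse the two processes from (\ref{eq:mr1}):
\[
\cG(\x,\y)=\y^TAS\x+cg,\qquad \cG_u(\x,\y)=c\lp\g^{(1)}\rp^T\y+\lp\g^{(2)}\rp^TS\x .
\]
By (\ref{eq:mr5}) and (\ref{eq:mr5a0}), the variances agree. The covariance difference equals $\lp c^2-(\x^{(a_1)})^TS^TS\x^{(a_2)}\rp\lp 1-(\y^{(a_1)})^T\y^{(a_2)}\rp$. This is $\geq 0$ in general, and it vanishes when $\x^{(a_1)}=\x^{(a_2)}$, because then $\|S\x\|_2^2=c^2$.

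These are exactly the hypotheses of Gordon's min-max theorem (Theorem 1.1 in \cite{Gordon85} in its full form, of which Theorem \ref{thm:Gordonpos1} is the Slepian special case). The min is over the $\x$-index and the max over the $\y$-index. The requirements are: covariances comparable with the right sign across different $\x$'s, and equal within the same $\x$. Gordon's theorem then yields
\[
\mE\min_{\x}\max_{\y}\lp\y^TAS\x+cg\rp\geq \mE\min_{\x}\max_{\y}\lp c\lp\g^{(1)}\rp^T\y+\lp\g^{(2)}\rp^TS\x\rp .
\]
The main obstacle is checking the direction of the inequality carefully. Which process plays $X$ and which plays $Y$, together with the sign of the difference, must give a lower bound for the min-max of the $A$-process. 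I would verify this against the standard Gordon statement: for processes with $\mE X_{ij}X_{ik}\geq\mE Y_{ij}Y_{ik}$, $\mE X_{ij}X_{lk}\leq \mE Y_{ij}Y_{lk}$ for $i\neq l$, and equal variances, one has $\mE\min_i\max_j X_{ij}\geq \mE\min_i\max_j Y_{ij}$. The same-$\x$ equality is the delicate point that makes this usable.

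\textbf{Evaluating the right side.} The term $cg$ has zero mean, so it drops out of the left side. On the right side, the max over $\y$ gives $c\|\g^{(1)}\|_2$, and the min over $\x$ gives
\[
\min_{\x}\lp\g^{(2)}\rp^TS\x=-\max_{\x}\lp-\g^{(2)}\rp^TS\x ,
\]
which has expectation $-\mE L(c)$ by the symmetry of $\g^{(2)}$. Dividing by $\sqrt{n}$ gives
\[
\mE\xi^-(c)\geq \frac{c}{\sqrt{n}}\mE\|\g^{(1)}\|_2-\frac{1}{\sqrt{n}}\mE L(c).
\]
Finally, $\frac{1}{\sqrt{n}}\mE\|\g^{(1)}\|_2\rightarrow 1$; more precisely $\mE\|\g^{(1)}\|_2\geq\sqrt{n-1}$, so the deficit is $O(1/\sqrt{n})$ and vanishes in the large-dimensional regime, consistent with Remark \ref{rem:rem0}. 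This gives $c-\frac{1}{\sqrt{n}}\mE L(c)$ asymptotically. Combining it with the trivial bound $0$ yields (\ref{eq:dellamthm1eq2}).
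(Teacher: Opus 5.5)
Your route is the paper's: Gordon's min--max comparison applied to the processes of (\ref{eq:mr1}), followed by evaluating the right-hand side. The comparison inequality you display is the correct one.

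\textbf{The direction check uses a reversed statement of Gordon.} The step you single out as the main obstacle is settled with a misquoted theorem. Under the hypotheses you list (equal variances, $\mE X_{ij}X_{ik}\geq\mE Y_{ij}Y_{ik}$, and $\mE X_{ij}X_{lk}\leq\mE Y_{ij}Y_{lk}$ for $i\neq l$), Gordon's conclusion is $\mE\min_i\max_j X_{ij}\leq\mE\min_i\max_j Y_{ij}$, not $\geq$; this is Theorem \ref{thm:minGordonpos1}. A quick sanity check: with a single index $i$ the statement must reduce to Slepian's lemma, where the more correlated process $X$ has the smaller expected maximum, and your version contradicts this.

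By (\ref{eq:mr5}), the cross-$\x$ covariances of $\cG_u$ are the smaller ones, so the assignment is forced: $X\leftrightarrow\cG_u$ and $Y\leftrightarrow\cG$. With the correct conclusion this gives $\mE\min_{\x}\max_{\y}\cG\geq\mE\min_{\x}\max_{\y}\cG_u$, which is your displayed lower bound. The version you wrote would instead have produced the opposite (upper) bound. So the verification you propose fails as written, and it goes through once the direction is corrected.

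\textbf{Your treatment of $\mE\|\g^{(1)}\|_2$ is better than the paper's.} The paper writes $\mE\|\g^{(1)}\|_2\geq\sqrt{\mE\|\g^{(1)}\|_2^2}$, which runs against Jensen's inequality. Your bound $\mE\|\g^{(1)}\|_2\geq\sqrt{n-1}$ is valid. It follows that what is actually established, by your argument and in truth by the paper's, is the inequality up to an additive term of at most $c/n$, not exactly at finite $n$. This is harmless, because the bound is only used in the limit in Theorem \ref{thm:dellamthm3}.
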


\begin{proof} let processes $ \cG (\cX) $ and $ \cG_l (\cX) $ analogously to (\ref{eq:mr1}), i.e., let 
  \begin{eqnarray}
\label{eq:dellammr1}
 \cG (\cX) & \triangleq &  \cG (\x,\y)  \triangleq  \sum_{i=1}^n \sum_{j=1}^m A_{i,j}\s_i\x_i\y_j  + c g  \nonumber   \\
 \cG_l (\cX) & \triangleq &  \cG_l (\x,\y)  \triangleq    c\lp\g^{(1)}\rp^T\y +   \lp\g^{(2)}\rp^TS\x .
  \end{eqnarray}
For two arrays $\cX^{(a_1,b_1)}=\{ \x^{(a_1)},\y^{(b_1)}\}$ and $\cX^{(a_2,b_2)}=\{ \x^{(a_2)},\y^{(b_2)}\}$ with $\|\x^{(a_i)}\|_2=\|\y^{(b_i)}\|_2=1$ and $\|S\x^{(a_i)}\|_2=c$, $i=1,2$,  we have similarly to (\ref{eq:mr2})
  \begin{eqnarray}
\label{eq:dellammr2}
\mE \cG (\cX^{(a_1,b_1)})\cG (\cX^{(a_2,b_2)})   & =  &    \lp \x^{(a_1)} \rp^TS^TS\x^{(a_2)} \lp \y^{(b_1)} \rp^T\y^{(b_2)}  + c^2
\nonumber   \\
\mE \cG_l (\cX^{(a_1,b_1)})\cG_l (\cX^{(a_2,b_2)})  & =  &  c^2\lp \y^{(b_1)} \rp^T\y^{(b_2)}  +   \lp \x^{(a_1)} \rp^TS^TS\x^{(a_2)} .
  \end{eqnarray}
  From (\ref{eq:mr5}), we also find
  \begin{eqnarray}
\label{eq:dellammr5}
  \mE \cG (\cX^{(a_1,b_1)})\cG (\cX^{(a_2,b_2)})
 -
\mE \cG_l (\cX^{(a_1,b_1)})\cG_l (\cX^{(a_2,b_2)} )
 & = &    \lp \x^{(a_1)} \rp^TS^TS\x^{(a_2)} \lp \y^{(b_1)} \rp^T\y^{(b_2)}  + c^2 
\nonumber
\\
& &  - c^2\lp \y^{(b_1)} \rp^T\y^{(b_2)}  -   \lp \x^{(a_1)} \rp^TS^TS\x^{(a_2)}
\nonumber
\\
& = &
 \lp c^2- \lp \x^{(a_1)} \rp^TS^TS\x^{(a_2)}  \rp \lp 1 - \lp \y^{(b_1)} \rp^T\y^{(b_2)}\rp
 \nonumber \\
&  \geq &   0.  
  \end{eqnarray}
For the completeness, we also observe  
  \begin{equation}
\label{eq:dellammr5a0}
  \mE \cG (\cX^{(a_1,b_1)})\cG (\cX^{(a_1,b_2)})
 -
\mE \cG_l (\cX^{(a_1,b_1)})\cG_l (\cX^{(a_1,b_2)} )
  = 
 \lp c^2- \lp \x^{(a_1)} \rp^T S^TS \x^{(a_1)}  \rp \lp 1 - \lp \y^{(b_1)} \rp^T\y^{(b_2)}\rp
  =   0,
  \end{equation}
  where the last equality follows since $\|S\x^{(a_i)}\|_2=c^2$ for $i=1,2$. Additionally, we also have
    \begin{equation}
\label{eq:dellammr5a0a0}
  \mE \cG (\cX^{(a_1,b_1)})\cG (\cX^{(a_1,b_1)})
 -
\mE \cG_l (\cX^{(a_1,b_1)})\cG_l (\cX^{(a_1,b_1)} )
  = 
 \lp c^2- \lp \x^{(a_1)} \rp^T S^TS \x^{(a_1)}  \rp \lp 1 - \lp \y^{(b_1)} \rp^T\y^{(b_1)}\rp
  =   0,
  \end{equation}
where the last equality follows since $\|S\x^{(a_i)}\|_2=c^2$ and/or $\y^{(a_i)}\in\mS^m$ for $i=1,2$.
  
We recall on the following (complete) version of Theorem 1.1 from \cite{Gordon85}.  
\begin{theorem}(\cite{Gordon85})
\label{thm:minGordonpos1} Let $X_{ij}$ and $Y_{ij}$, $1\leq i\leq n$, $1\leq j\leq m$, be two centered Gaussian processes which satisfy the following inequalities for all choices of indices
\begin{enumerate}
\item $\mE(X_{ii}^2)=\mE(Y_{ii}^2)$
\item $\mE(X_{ij}X_{il})\geq \mE(Y_{ij}Y_{il})$
\item $\mE(X_{ij}X_{lk})\leq \mE(Y_{ij}Y_{lk})$, $i\neq l$.
\end{enumerate}
 Then
\begin{equation*}
\mE(\min_{i} \max_{j} X_{ij})\leq \mE(\min_i \max_{j} Y_{ij}) \quad  \Longleftrightarrow \quad \mE(\max_{i}\min_{j} X_{ij})\geq \mE(\max_i \min_{j} Y_{ij}).
\end{equation*}
\end{theorem}

\begin{remark}
\label{rem:rem4}
  As stated earlier, Theorem \ref{thm:Gordonpos1} is technically a special case of Theorem  \ref{thm:minGordonpos1}. However, it is known as Slepian lemma \cite{ Slep62} and it existed on its own long before introduction of Theorem  \ref{thm:minGordonpos1} in \cite{Gordon85}. For further developments and more general variants of which Theorem  \ref{thm:minGordonpos1} is a special case see, e.g., \cite{Stojnicgscompyx16,Stojnicgscomp16}.
\end{remark}

With correspondence $Y\leftrightarrow\cG$ and $X\leftrightarrow\cG_u$ one can apply  Theorem \ref{thm:Gordonpos1} to processes $\cG(\cdot)$ and  $\cG_l(\cdot)$. As a result, the following is obtained
\begin{align}\label{eq:dellammt5a1a0}
&  &\mE \max_{\cX^{(a_1,b_1)}} \cG_l(\cX)  & \geq \mE \max_{\cX^{(a_1,b_1)}} \cG(\cX)
\nonumber \\
\Longleftrightarrow & & \mE \min_{\x\in\mS^d,\|S\x\|_2=c}\max_{\y\in\mS^n} \lp \sum_{i=1}^n \sum_{j=1}^m A_{i,j}\s_i\x_i\y_j  + c g\rp & \geq \mE \min_{\x\in\mS^d,\|S\x\|_2=c}\max_{\y\in\mS^n} \lp c\lp\g^{(1)}\rp^T\y +   \lp\g^{(2)}\rp^TS\x \rp
\nonumber \\
\Longleftrightarrow & & \mE \min_{\x\in\mS^d,\|S\x\|_2=c}\max_{\y\in\mS^n}  \y^TAS\x & \geq \mE \min_{\x\in\mS^d,\|S\x\|_2=c}  \lp\| c\g^{(1)}\|_2 +   \lp\g^{(2)}\rp^TS\x \rp.
 \end{align}
Connecting further (\ref{eq:dellam12}) and (\ref{eq:dellammt5a1a0}), we then also find
\begin{eqnarray}\label{eq:dellammt5a1a1}
 \mE  \xi^-(c)  & \geq &  \frac{c}{\sqrt{n}} \mE\| \g^{(1)}\|_2  +  \frac{1}{\sqrt{n}}   \mE \min_{\x\in\mS^d,\|S\x\|_2=c }     \lp\g^{(2)}\rp^TS\x
 \nonumber \\
 & \geq &  \frac{c}{\sqrt{n}} \sqrt{\mE\| \g^{(1)}\|_2^2}  +  \frac{1}{\sqrt{n}}  \mE \min_{\x\in\mS^d,\|S\x\|_2=c }     \lp\g^{(2)}\rp^TS\x
  \nonumber \\
 & \geq &  c -  \frac{1}{\sqrt{n}}  \mE \max_{\x\in\mS^d,\|S\x\|_2=c}     \lp\g^{(2)}\rp^TS\x,
 \end{eqnarray}
which, together with (\ref{eq:thm1eq1}), gives  (\ref{eq:dellamthm1eq2}) and completes the proof.
\end{proof}

Finally, we are now in position to complete the story and establish spectral radius analogue to Theorems \ref{thm:thm3} and  \ref{thm:lbstrthm2}.

\begin{theorem}
\label{thm:dellamthm3}
Assume the setup of Theorem \ref{thm:thm3} with $\phi_1(\cdot)$, $\phi_2(\cdot)$, and $\phi_3(\cdot)$ from (\ref{eq:hrd17a1}) and (\ref{eq:hrd19}). Let $\hat{\gamma}_x$ satisfy (\ref{eq:thm3eq1}). One then has  
\begin{equation}\label{eq:dellamthm3eq2}
\delta(\alpha) =
 \lim_{d\rightarrow \infty}\mE \| \hat{\Sigma} -\Sigma \|_2 
=
 \lim_{d\rightarrow \infty}\mE \lambda_n\lp \hat{\Sigma} -\Sigma   \rp 
  = \hat{\delta}_u(\alpha) 
  =  \lim_{d\rightarrow \infty} \frac{\hat{\gamma}_x\sqrt{\phi_1(\hat{\gamma}_x)}} {\sqrt{\phi_1(\hat{\gamma}_x)} -\sqrt{\alpha} }.
       \end{equation}
 \end{theorem}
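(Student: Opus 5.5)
The statement combines three pieces: the characterization of $\lim_{d\rightarrow\infty}\mE\lambda_n(\hat{\Sigma}-\Sigma)$, the identification of that limit with $\hat{\delta}_u(\alpha)$, and a domination argument showing that $|\lambda_1(\hat{\Sigma}-\Sigma)|$ never exceeds $\lambda_n(\hat{\Sigma}-\Sigma)$ in the limit. For the first two I will simply chain earlier results. Theorem \ref{thm:thm3} gives the upper bound $\lim\mE\lambda_n(\hat{\Sigma}-\Sigma)\leq\hat{\delta}_u(\alpha)$. Theorems \ref{thm:lbstrthm2} and \ref{thm:lbstrthm3} together give (\ref{eq:dellam4}), i.e. $\lim\mE\xi(c)=c+\frac{1}{\sqrt{\alpha}}\ell(c)$ with $\ell(c)\triangleq\lim_{d\rightarrow\infty}\frac{1}{\sqrt{d}}\mE L(c)=\sqrt{\min_{\gamma_x}\bar{L}}$. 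Plugging this into (\ref{eq:dellam1}), with concentration (Remark \ref{rem:rem0}) used to swap $\mE$ with squaring and with $\max_c$, gives (\ref{eq:dellam5}). The optimization in (\ref{eq:dellam5}) is exactly $\max_c\min_{\gamma_x}\delta_u(\alpha)$, which the computation in the proof of Theorem \ref{thm:thm3} evaluates to $\frac{\hat{\gamma}_x\sqrt{\phi_1(\hat{\gamma}_x)}}{\sqrt{\phi_1(\hat{\gamma}_x)}-\sqrt{\alpha}}$. This yields the third and fourth equalities in (\ref{eq:dellamthm3eq2}).

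For the spectral norm I use (\ref{eq:delam7}). If $\lim\mE\lambda_1(\hat{\Sigma}-\Sigma)\geq 0$, then (\ref{eq:delam9}) already finishes the proof. Otherwise I use (\ref{eq:dellam10})--(\ref{eq:dellam13}) together with Theorem \ref{thm:dellamthm1}, which give
\begin{equation*}
\lim\mE\lambda_n(\Sigma-\hat{\Sigma})\leq\max_c\left(c^2-\left(\max\left\{c-\tfrac{\ell(c)}{\sqrt{\alpha}},0\right\}\right)^2\right).
\end{equation*}
Again concentration lets expectations pass through the squares. The goal is to compare this pointwise in $c$ with the corresponding term in (\ref{eq:dellam5}), namely $(c+\ell/\sqrt{\alpha})^2-c^2=\frac{2c\ell}{\sqrt{\alpha}}+\frac{\ell^2}{\alpha}$.

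There are two cases. When $c\geq\ell/\sqrt{\alpha}$, the bound becomes $c^2-(c-\ell/\sqrt{\alpha})^2=\frac{2c\ell}{\sqrt{\alpha}}-\frac{\ell^2}{\alpha}$, which is at most $\frac{2c\ell}{\sqrt{\alpha}}+\frac{\ell^2}{\alpha}$. When $c<\ell/\sqrt{\alpha}$, the bound is $c^2<\frac{\ell^2}{\alpha}$, which is again at most the $\lambda_n$ term. Since both bounds hold for every admissible $c$, taking $\max_c$ on both sides gives $\lim\mE|\lambda_1(\hat{\Sigma}-\Sigma)|\leq\lim\mE\lambda_n(\hat{\Sigma}-\Sigma)$. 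Hence $\lim\mE\|\hat{\Sigma}-\Sigma\|_2=\lim\mE\lambda_n(\hat{\Sigma}-\Sigma)$.

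One subtlety is that $\mE\max(\cdot,\cdot)$ is not $\max(\mE,\mE)$. This is harmless because both extreme eigenvalues concentrate around their means with $o(1)$ fluctuations, so the limit of the expected maximum equals the maximum of the limits. A second point to check is that the $c$-range $(\min\s,\max\s)$ is the same in both problems; it is, since the constraint set $\{\x\in\mS^d,\|S\x\|_2=c\}$ is shared, with endpoints included by continuity.

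The main obstacle is not the minimal-eigenvalue side, which only needs the one-sided Gordon bound and elementary algebra, but making rigorous the replacement of $\mE\max_c(\xi(c)^2-c^2)$ by $\max_c\big((\mE\xi(c))^2-c^2\big)$. This requires uniform-in-$c$ concentration of $\xi(c)$. I would handle it with a finite $\epsilon$-net in $c$: $\xi(c)$ is Lipschitz in $A$ with constant $\max(\s)/\sqrt{n}$, which gives Gaussian concentration at each net point, and a union bound covers the net. Between net points I would use continuity of $\xi(c)$ in $c$, which follows from the constraint set varying continuously in $c$ on the interior of $(\min\s,\max\s)$.
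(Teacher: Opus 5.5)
Your proposal is correct and follows essentially the same route as the paper. You chain Theorems \ref{thm:thm3}, \ref{thm:lbstrthm2} and \ref{thm:lbstrthm3} to obtain (\ref{eq:dellam5}) and its closed form $\hat{\delta}_u(\alpha)$, then bound $|\lambda_1(\hat{\Sigma}-\Sigma)|=\lambda_n(\Sigma-\hat{\Sigma})$ through Theorem \ref{thm:dellamthm1} by the piecewise function $\omega(c)$, and compare it pointwise in $c$ with the $\lambda_n$ objective. Your version is slightly more careful than the paper's: you state the $|\lambda_1|$ bound as an inequality (the paper writes an equality), and you address the uniform-in-$c$ concentration needed to interchange $\mE$ and $\max_c$.
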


\begin{proof} 
A combination of (\ref{eq:dellam10}) and (\ref{eq:dellam13}) gives
\begin{eqnarray}\label{eq:dellamxpr1}
\left |\lambda_1 \lp\hat{\Sigma} - \Sigma\rp \right | = \lambda_n \lp\Sigma -\hat{\Sigma}\rp
     =  
 \max_{c} \lp  c^2 - \lp \xi^-(c)\rp^2 \rp.
\end{eqnarray}
From  (\ref{eq:dellamthm1eq2}) we then find
\begin{eqnarray}\label{eq:dellamxpr2}
\lim_{d\rightarrow \infty} \mE \left |\lambda_1 \lp\hat{\Sigma} - \Sigma\rp \right |   
     =  
 \max_{c} \lp  c^2 - \lp  \max \left \{c -  \frac{1}{\sqrt{\alpha}}\lim_{d\rightarrow \infty}\frac{1}{\sqrt{d}} \mE L(c),0 \right \} \rp^2 \rp,
\end{eqnarray}
and
\begin{equation}\label{eq:dellamxpr3}
\lim_{d\rightarrow \infty} \mE \left |\lambda_1 \lp\hat{\Sigma} - \Sigma\rp \right |   
     =   \max_c \hspace{.03in} \omega(c), 
     \end{equation}
where
\begin{equation}\label{eq:dellamxpr4}
\omega(c)     =  \begin{cases}
           \frac{2c}{\sqrt{\alpha}} \lim_{d\rightarrow \infty}\frac{1}{\sqrt{d}} \mE L(c)
           -
           \frac{1}{\alpha} \lp\lim_{d\rightarrow \infty}\frac{1}{\sqrt{d}} \mE L(c)\rp^2, & \mbox{if } c \geq   \frac{1}{\sqrt{\alpha}}\lim_{d\rightarrow \infty}\frac{1}{\sqrt{d}} \mE L(c) \\
          c^2, & \mbox{otherwise}.
        \end{cases}
 \end{equation}
Combining  (\ref{eq:dellamxpr3}) and  (\ref{eq:dellamxpr4})  with (\ref{eq:dellam5}), one arrives at
\begin{eqnarray}\label{eq:dellamxpr5}
\lim_{d\rightarrow \infty} \mE \left |\lambda_1 \lp\hat{\Sigma} - \Sigma\rp \right |
\leq \lim_{d\rightarrow \infty } \mE  \lambda_n\lp \hat{\Sigma} -\Sigma   \rp .
 \end{eqnarray}
The proof is then completed after one recognizes that (\ref{eq:dellamthm3eq2}) follows automatically from 
(\ref{eq:amat1a0a5}), (\ref{eq:amat1a0a6}), (\ref{eq:thm3eq2}),  (\ref{eq:thm3eq3}), (\ref{eq:dellamxpr5}), and main results of Theorems \ref{thm:lbstrthm2}  and \ref{thm:lbstrthm3}.  
 \end{proof}

\subsection{Practical numerical utilization}
\label{sec:numeval}

The results from the previous sections provide  characterization of
$\lambda_n\lp \hat{\Sigma} -\Sigma   \rp $ and $\delta(\alpha)$. In Figure \ref{fig:fig2} we compare these theoretical predictions with simulated results. As in Figure \ref{fig:fig1}, for different dimensions $d$, we selected $\s=\mbox{linspace}_d[0.5,1]$ as a vector of $d$ equally spaced components from $[0.5,1]$ interval. While the theoretical framework requires $d\rightarrow \infty$,  Figure \ref{fig:fig2}  shows that an excellent agreement between the theory and simulations is achieved already for $d$ on the order of thousand.

  \begin{figure}[h]
\centering
\centerline{\includegraphics[width=.87\linewidth]{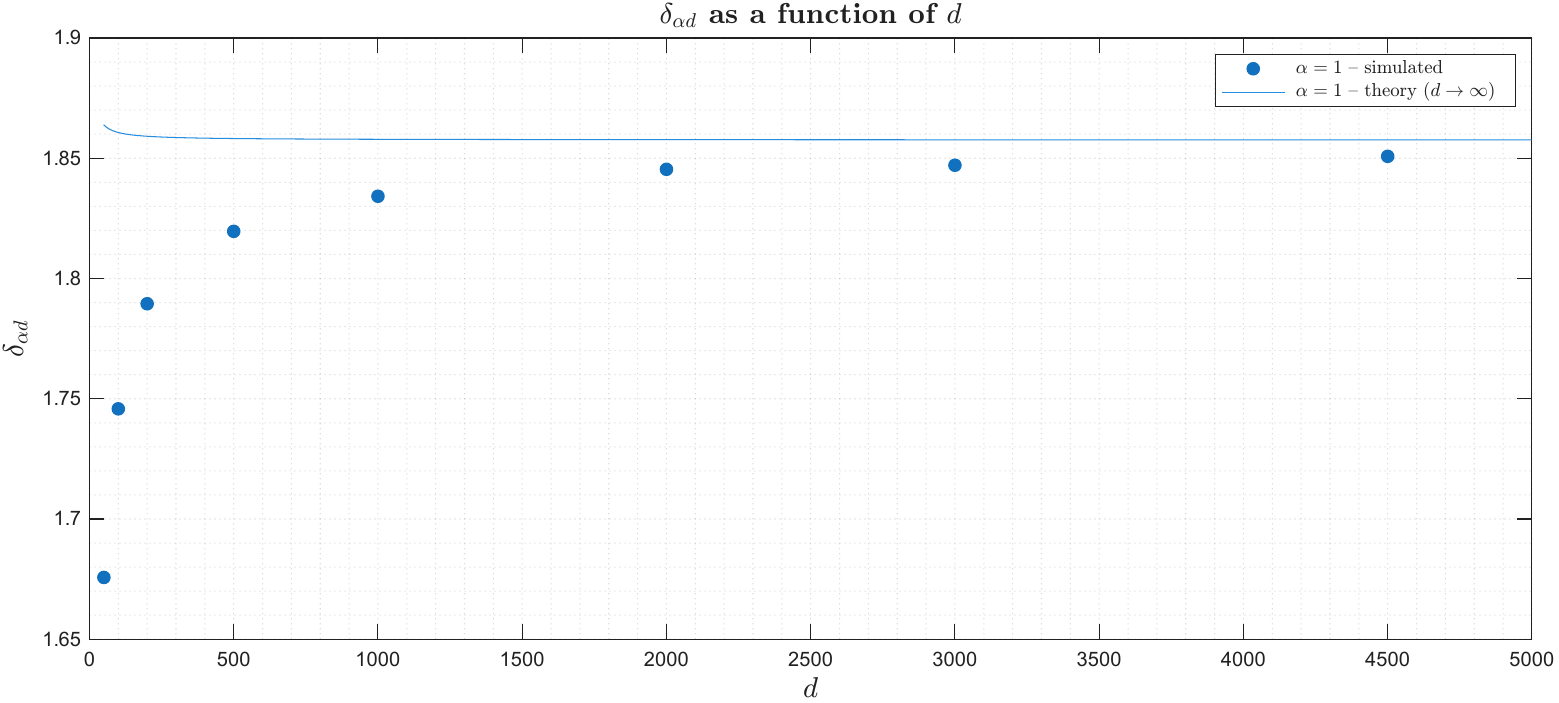}}
\caption{ Sample covariance error, $\delta_n=\delta_{\alpha d} =\mE \|\hat{\Sigma} -\Sigma\|_2 $, as a function of $d$; $\alpha=1$, i.e., $n=d$; $\s=\mbox{linspace}_d[0.5,1]$ }
\label{fig:fig2}
\end{figure}

The conducted analysis allows to obtain very precise estimation error characterizations. Consequently, it enables to accurately predict concrete effects of the increased number of samples. We show this in 
Figure \ref{fig:fig3} where $\alpha$ is successively increased. As can be seen, the simulated results again closely follow the theoretical predictions, although the utilized dimensions are relatively small   compared to the infinitely-dimensional setup required by the theoretical analysis.

  \begin{figure}[h]
\centering
\centerline{\includegraphics[width=.87\linewidth]{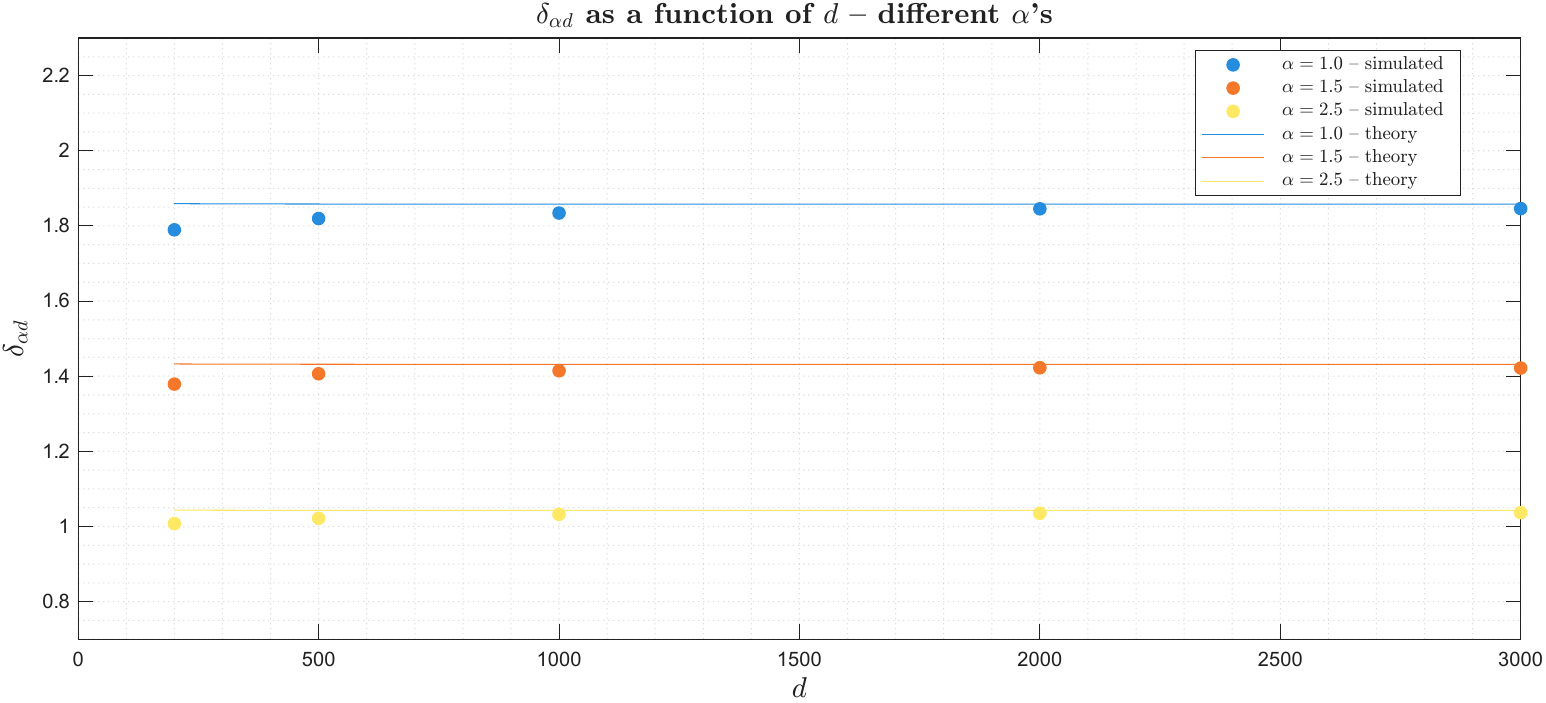}}
\caption{ Sample covariance error, $\delta_n=\delta_{\alpha d} =\mE \|\hat{\Sigma} -\Sigma\|_2 $, as a function of $d$; varying sample complexity $n$, i.e., varying  $\alpha=\frac{n}{d}$; $\s=\mbox{linspace}_d[0.5,1]$ }
\label{fig:fig3}
\end{figure}

\section{Conclusion}
\label{sec:conc}

We studied the sample covariance error of centered Gaussians. To
move beyond scaling characterizations and determine the precise limiting value of the error's spectral norm, we have developed a generic framework based on Random Duality Theory (RDT). The framework consists of three key components: (1)  Deriving closed-form, explicit RDT-based upper bounds; (2)  Introducing a novel bilinear-quadratic RDT lower-bounding mechanism; and (3)  Combining this lower-bounding mechanism with a 2-replica systems bounding strategy to demonstrate that the upper bounds can be matched in large-dimensional contexts. Our theoretical predictions show excellent agreement with the results obtained through numerical evaluations and simulations.

The developed framework is highly generic and offers significant potential for further extensions, including studying nearly all associated problem variants considered in recent literature and beyond. While the conceptual foundations remain the same, the specific underlying technical considerations are problem-dependent and will be discussed in future work.

%
%
%
%
%
%
%

\begin{singlespace}
\bibliographystyle{plain}
\bibliography{nflgscompyxRefs}
\end{singlespace}

\end{document}